\documentclass[twoside]{amsart}
\usepackage{latexsym}
\usepackage{amssymb,amsmath,amsopn}
\usepackage[dvips]{graphicx}   
\usepackage{color,epsfig}      
\usepackage{verbatim}      

               {\begin{list}{}{\leftmargin#1\rightmargin#2}\item{}}%
               {\end{list}}

\newtheorem{thm}{Theorem}
\newtheorem{lem}{Lemma}
\newtheorem{prop}{Proposition}
\theoremstyle{definition}
\newtheorem{defn}{Definition}
\newtheorem{rem}{Remark}

\newtheorem{cor}{Corollary}
\renewcommand{\Re}{\mathbb R}

\renewcommand{\S}{\mathbb{S}}

\definecolor{green}{rgb}{0.0, 0.75, 0.25}

\def\eea{\end{eqnarray}}

\DeclareMathOperator{\aff}{aff}
\DeclareMathOperator{\relint}{relint}

\DeclareMathOperator{\inter}{int}
\DeclareMathOperator{\bd}{bd}
\DeclareMathOperator{\dist}{dist}
\DeclareMathOperator{\conv}{conv}

\DeclareMathOperator{\cl}{cl}

\parskip=4pt

\begin{document}

\title[Balancing polyhedra]{Balancing polyhedra}
\author[G. Domokos, F. Kov\'acs, Z. L\'angi, K. Reg\H{o}s and P.T. Varga] {G\'abor Domokos, Fl\'ori\'an Kov\'acs, Zsolt L\'angi, Krisztina Reg\H{o}s and P\'eter T. Varga}
\address{G\'abor Domokos, MTA-BME Morphodynamics Research Group and Dept. of Mechanics, Materials and Structures, Budapest University of Technology,
M\H uegyetem rakpart 1-3., Budapest, Hungary, 1111}
\email{domokos@iit.bme.hu}
\address{Fl\'ori\'an Kov\'acs, Dept. of Structural Mechanics, Budapest University of Technology,
M\H uegyetem rakpart 1-3., Budapest, Hungary, 1111}
\email{kovacs.florian@epito.bme.hu}
\address{Zsolt L\'angi, MTA-BME Morphodynamics Research Group and Dept. of Geometry, Budapest University of Technology,
Egry J\'ozsef utca 1., Budapest, Hungary, 1111}
\email{zlangi@math.bme.hu}
\address{Krisztina Reg\H os, MTA-BME Morphodynamics Research Group,
M\H uegyetem rakpart 1-3., Budapest, Hungary, 1111}
\email{regoskriszti@gmail.com}
\address{P\'eter T. Varga, MTA-BME Morphodynamics Research Group,
M\H uegyetem rakpart 1-3., Budapest, Hungary, 1111}
\email{petercobbler@gmail.com}
\thanks{Support of the NKFIH Hungarian Research Fund grant 119245 and of grant BME FIKP-V\'IZ by EMMI is kindly acknowledged.
ZL has been supported by the Bolyai Fellowship of the Hungarian Academy of Sciences and partially supported by the UNKP-19-4 New National Excellence Program of the Ministry of Human Capacities. The authors thank Mr. Otto Albrecht for backing the prize for the complexity of the G\"omb\"oc-class.  
Any solution should be sent to the corresponding author as an accepted publication in a mathematics journal  of worldwide reputation and it must also have general acceptance in the mathematics community two years after.  The authors are indebted to Dr. Norbert Kriszti\'an
 Kov\'acs for his invaluable advice and help in printing the 9 tetrahedra and 7 pentahedra. }
\subjclass[2010]{52B10, 70C20, 52A38}

\keywords{polyhedron, static equilibrium, monostatic polyhedron, $f$-vector}

\begin{abstract}
We define the mechanical complexity $C(P)$ of a convex polyhedron $P,$ interpreted as a homogeneous solid, as the difference between the  total number of its faces, edges and vertices and the number of its static equilibria, and
the  mechanical complexity $C(S,U)$ of primary equilibrium classes $(S,U)^E$ with $S$ stable and $U$ unstable equilibria as the infimum of the mechanical complexity of all polyhedra in that class.
We prove that the mechanical complexity of a class  $(S,U)^E$ with $S, U > 1$ is the minimum of $2(f+v-S-U)$ over all polyhedral pairs  $(f,v )$, where a pair of integers is called a polyhedral pair if there is a convex polyhedron with $f$ faces and $v$ vertices.
In particular, we prove that the mechanical complexity of a class $(S,U)^E$ is zero if, and only if there exists a convex polyhedron with $S$ faces and $U$ vertices.
We also give asymptotically sharp bounds for the mechanical complexity of the monostatic classes $(1,U)^E$ and $(S,1)^E$, and offer a complexity-dependent prize for the complexity of the G\"omb\"oc-class $(1,1)^E$. 
\end{abstract}
\maketitle

\tableofcontents

\section{Introduction}\label{sec:intro}

\subsection{Basic concepts and the main result}

Polyhedra may be regarded as purely geometric objects, however, they are also often intuitively identified with solids. Among
the most obvious sources of such intuition are dice which appear in various polyhedral shapes: while classical, cubic dice have 6 faces, a large diversity of other dice exist as well: dice with 2, 3, 4, 6, 8, 10, 12, 16, 20, 24, 30 and 100 faces appear in various games \cite{dice}. The key idea behind throwing dice is that each of the aforementioned faces is associated with a stable mechanical
equilibrium point where dice may be at rest on a horizontal plane. Dice are called \emph{fair} if the probabilities to rest on any face (after a random throw) are equal  \cite{Diaconis}, otherwise they are called \emph{loaded}
\cite{DawsonFinbow}. The concept of mechanical equilibrium may also be defined in purely geometric terms:
\begin{defn}\label{def1}
Let $P$ be a convex polyhedron, let $\inter P$ and $\bd P$ denote its interior and boundary, respectively and let $c \in \inter P$.  We say that $q \in \bd P$ is an \emph{equilibrium point} of $P$ with respect to $c$ if the plane $H$ through $q$ and perpendicular to $[c,q]$ supports $P$ at $q$. In this case $q$ is \emph{nondegenerate}, if $H \cap P$ is the (unique) face of $P$ that contains $q$ in its relative interior. A nondegenerate equilibrium point $q$ is called \emph{stable, saddle-type} or \emph{unstable}, if $\dim (H \cap P) = 2,1$ or $0$, respectively.
\end{defn}
Throughout this paper we deal only with equilibrium points with respect to the center of mass of polyhedra, assuming uniform density.
A \emph{support plane} is a generalization of the tangent plane for non-smooth objects. While it is a central concept of convex geometry its name may be related to the mechanical concept of equilibrium.
If $c$ coincides with the center of mass of $P$, then equilibrium points gain intuitive interpretation as locations on $\bd P$
where $P$ may be balanced if it is supported on a horizontal surface (identical to the support plane) without friction in the presence of uniform gravity.
Equilibrium points may belong to three stability types: faces may carry stable equilibria, vertices may carry unstable equilibria and edges may carry saddle-type equilibria.
Denoting their respective numbers by $S,U,H$, by the Poincar\'e-Hopf formula \cite{Milnor} for a convex  polyhedron one obtains the following relation for them:
\begin{equation}\label{Poincare}
S+U-H=2,
\end{equation}
which is strongly reminiscent of the well-known Euler formula
\begin{equation}\label{Euler}
f+v-e=2,
\end{equation}
relating the respective numbers $f$, $v$ and $e$ of the faces, vertices and edges of a convex polyhedron.
In the case of regular, homogeneous, cubic dice the formulae (\ref{Poincare}) and (\ref{Euler}) appear to express
the same fact, however, in case of irregular polyhedra the connection is much less apparent.
While the striking similarity between (\ref{Poincare}) and (\ref{Euler})
can only be fully explained via deep topological and analytic ideas \cite{Milnor}, our goal in this paper
is to demonstrate an interesting connection at an elementary, geometric level. To this end, we define
\begin{equation}\label{sum}
\begin{array}{rcl}
N & = & S+U+H, \\
n & = & f+v+e.
\end{array}
\end{equation}
Figure \ref{fig:1} shows three polyhedra where the values for all these quantities can be compared.

\begin{figure}[ht]
\begin{center}
\includegraphics[width=0.9\textwidth]{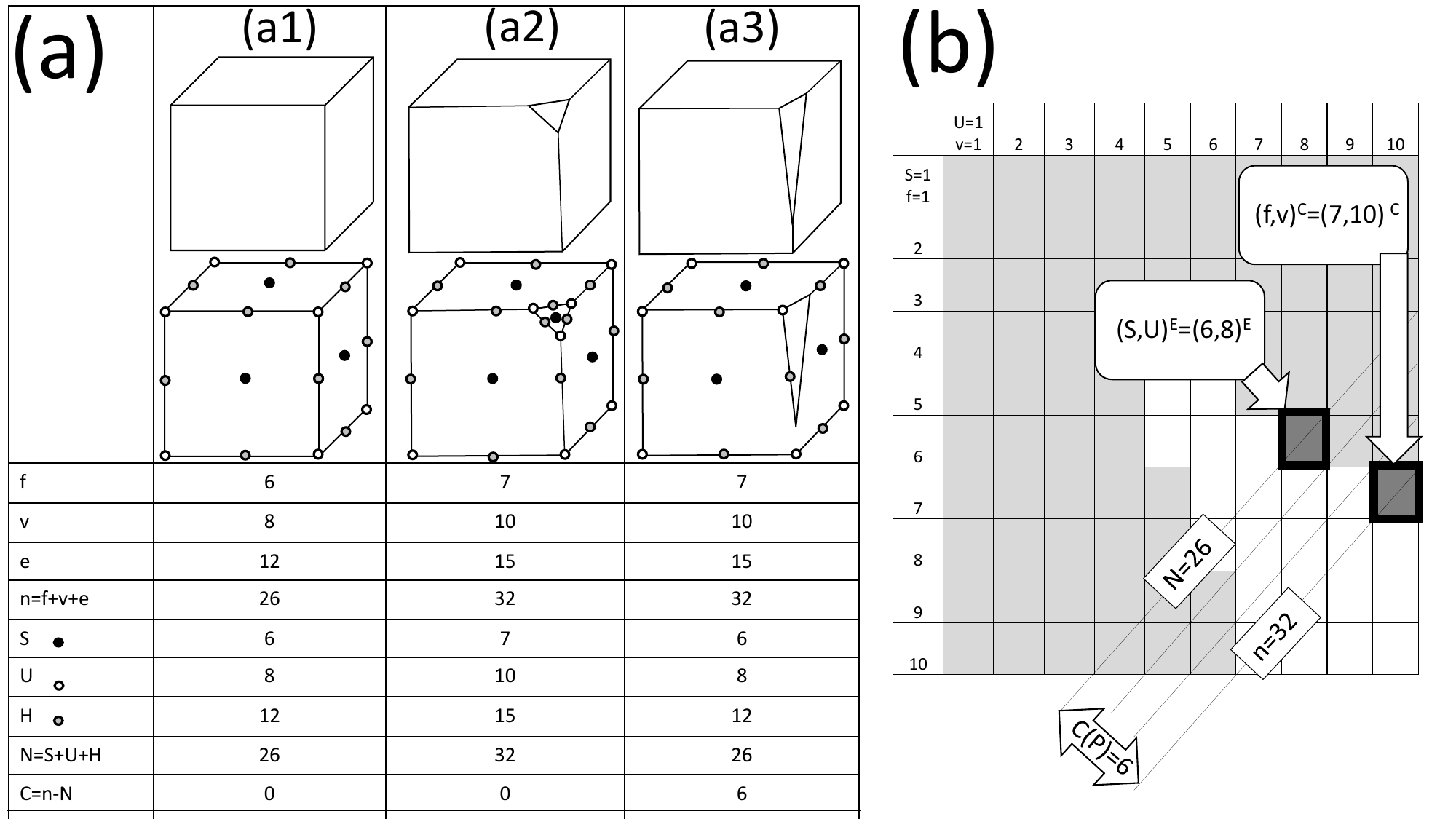}
\caption{ (a) Three polyhedra interpreted as homogeneous solids with given numbers for faces ($f$), 
vertices ($v$), edges ($e$), stable equilibria ($S$), unstable equilibria ($U$) and saddle-type equilibria ($H$), their respective sums $n=f+v+e$, $N=S+U+H$
and mechanical complexity $C=n-N$ (given in Definition \ref{defn:complex}). (b) Polyhedron in column $a3$ shown on the overlay of the $(S,U)$ and $(f,v)$ grids, complexity obtained from distance between corresponding diagonals.}
\label{fig:1}
\end{center}
\end{figure}
The numbers $S,U,H$ may serve, from the mechanical point of view, as a first-order characterization of $P$ and
via (\ref{Poincare}) the triplet $(S,U,H)$ may be uniquely represented by the pair $(S,U)$, which is called \emph{primary equilibrium class} of $P$ \cite{VarkonyiDomokos}.  Based on this, we denote by $(S,U)^E$ the family of all convex polyhedra having $S$ stable and $U$ unstable equilibrium points with respect to their centers of mass.
In an analogous manner, the numbers $(v,e,f)$ (also called the $f$-vector of $P$) serve as a first-order  combinatorial characterization of $P$, and via (\ref{Euler}) they may be uniquely represented by the pair $(f,v)$.  Here, we call the the family of all convex polyhedra having $v$ vertices and $f$ faces the \emph{primary combinatorial class} of $P$, and denote it by $(f,v)^C$. The face structure of a convex polyhedron $P$ permits a finer combinatorial description of $P$. In the literature, the family of convex polyhedra having the same face lattice is called a combinatorial class; here we call it a \emph{secondary combinatorial class}, and discuss it in Section~\ref{sec:conclusions}. In an entirely analogous manner, one can define also secondary equilibrium classes of convex bodies, for more details the interested reader is referred to \cite{DomokosLangiSzabo}.
While it is immediately clear that for any polyhedron $P$ we have
\begin{equation} \label{trivbound}
f \geq S, v \geq U,
\end{equation}
inverse type relationships (e.g. defining the minimal number of faces and vertices for given numbers of equilibria) are much less obvious.

A trivial necessary condition for any die to be fair can be stated as $f=S$ and
it is relatively easy to construct a polyhedron with this property. The opposite extreme case (when a polyhedron is stable only on one of its faces) appears to be far more complex and several papers \cite{Bezdek, Conway, Reshetov} are devoted to this subject to which we will return. Motivated by this intuition we define
the \emph{mechanical complexity} of polyhedra.
\begin{defn} \label{defn:complex}
Let $P$ be a convex polyhedron and let $N(P), n(P)$ denote the total number of its equilibria and the total number of its \emph{k-faces} (i. e., faces of $k$ dimensions) for all values $k=0,1,2$, respectively. Then
 $C(P)=n(P)-N(P)$ is called the mechanical complexity of $P$.
\end{defn} 
Mechanical complexity may not only be associated with individual polyhedra but also with primary equilibrium classes.
\begin{defn}\label{defn:class}
If  $(S,U)^E$ is a primary equilibrium class, then the quantity
\[
C(S,U)= \inf \{ C(P) : P \in  (S,U)^E \}
\]
is called the mechanical complexity of $(S,U)^E$.
\end{defn}
Our goal is to find the values of $C(S,U)$  for all primary equilibrium classes. For $S,U>1$ we will achieve this goal while for $S=1$ or $U=1$ we provide some partial results.
To formulate our main results, we introduce the following concept:
\begin{defn}\label{def:pair}
Let $x,y$ be positive integers. We say that  $( x,y )$ is a polyhedral pair if and only if $x \geq 4$ and $\frac{x}{2}+2 \leq y \leq 2x-4$.
\end{defn}
The  combinatorial classification of convex polyhedra was established by Steinitz \cite{Steinitz1, Steinitz2}, who proved, in particular, the following.
\begin{thm}\label{thm:steinitz}
For any positive integers $f,v$, there is a convex polyhedron $P$
with $f$ faces and $v$ vertices if and only if  $(f,v)$ is a polyhedral pair.
\end{thm}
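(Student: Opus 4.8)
The plan is to treat the two implications separately. Necessity is the easy direction: given a convex polyhedron $P$ with $f$ faces, $v$ vertices and $e$ edges, I would run the standard double‑counting argument against Euler's formula (\ref{Euler}). Since a convex polyhedron has at least four faces, $f\geq 4$. Since every $2$‑face is a polygon with at least three sides and every edge lies on exactly two faces, $2e\geq 3f$; dually, since every vertex has degree at least three, $2e\geq 3v$. Substituting $e=f+v-2$ into these two inequalities yields $v\geq \frac f2+2$ and $v\leq 2f-4$, which is precisely the condition in Definition \ref{def:pair}.

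For sufficiency I would first reduce to a convenient half of the admissible region. The condition defining a polyhedral pair is symmetric in its two entries: if $f\geq 4$ and $\frac f2+2\leq v\leq 2f-4$, then $v\geq 4$ and $\frac v2+2\leq f\leq 2v-4$. Since the polar dual of a convex polyhedron with $f$ faces and $v$ vertices is a convex polyhedron with $v$ faces and $f$ vertices, it suffices to realize every polyhedral pair $(f,v)$ with $v\leq f$. So fix such a pair; then $v\geq\frac f2+2\geq 4$ and $f\leq 2v-4$, hence $0\leq f-v\leq v-4$. Start from a pyramid over a convex $(v-1)$‑gon $Q$ with a tall apex: it has exactly $v$ vertices and $v$ faces, namely $Q$ together with $v-1$ lateral triangles. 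A convex $(v-1)$‑gon is cut into $v-3$ triangles by $v-4$ non‑crossing diagonals, so I may select $f-v\leq v-4$ of them (say a subset of a fan from one vertex); cutting $Q$ along these chords partitions the base into $f-v+1$ convex cells. Finally, replace the flat base by the polyhedral cap obtained by bending $Q$ slightly outward (away from the apex) along each chosen chord — equivalently, by the lower envelope of a convex lifting of this subdivision of $Q$. The resulting solid still has $v$ vertices and now has $(v-1)+(f-v+1)=f$ faces.

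The only genuinely delicate point, and the step I expect to be the main obstacle, is verifying that this "creased pyramid'' is actually convex. This is routine but does require care: choosing the apex tall enough forces every dihedral angle along an edge of $Q$ to be bounded well below $\pi$, and each crease is an outward bend that can be made as small as we like, so after the finitely many (at most $v-4$) bends all dihedral angles remain below $\pi$ and the solid stays convex. Alternatively one avoids the incremental argument entirely by noting that a subdivision of a convex polygon into cells by a partial fan of diagonals is regular, hence lifts to a convex cap directly. Apart from this, the whole argument is bookkeeping with Euler's formula.
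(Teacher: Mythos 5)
Your argument is correct, but note that the paper does not actually prove Theorem~\ref{thm:steinitz}: it is quoted as a classical result of Steinitz with references \cite{Steinitz1, Steinitz2}, so there is no in-paper proof to match against. Your necessity direction (double counting $2e\geq 3f$, $2e\geq 3v$ against Euler's formula) is the standard one and is fine. For sufficiency, your route differs from the construction the authors later use for the \emph{mechanical} analogue (Theorem~\ref{thm:main1}): there they start from pyramids realizing $(S,S)$ and repeatedly apply two local moves --- truncating a $3$-valent vertex (Lemma~\ref{lem:inductive_step_12}, which adds one face and two vertices) and stacking a flat tetrahedron on a triangular face (Lemma~\ref{lem:inductive_step_21}, which adds two faces and one vertex) --- which is essentially the classical incremental realization of all polyhedral pairs. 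You instead use polarity to reduce to $v\leq f$ and then produce each such pair in one shot as a ``creased pyramid'' over a $(v-1)$-gon, bending the base outward along $f-v\leq v-4$ diagonals of a fan. Both work; your version trades the induction for a single explicit construction plus a convexity check, and that check is sound as you present it: the chords of a fan all pass through one base vertex, so each successive outward bend is a rotation of a rigid piece about a genuine line of the current polyhedron, a tall apex keeps the base dihedral angles near $\pi/2$, and sufficiently small bend angles preserve convexity and create no new vertices. (The alternative remark that a sub-fan subdivision is regular, hence lifts to a convex cap, is also correct.) The only cosmetic caveat is that the inequalities in Definition~\ref{def:pair} are stated for the pair in the order $(f,v)$, so when you invoke polarity you should say explicitly, as you essentially do, that the defining condition is symmetric under swapping the two entries.
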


\begin{rem}\label{rem:lowerbound}
Let  $(S,U)^E$ be a primary equilibrium class with $S, U \geq 1$, and let
\[
R(S,U) = \inf\{ f+v-S-U:  (f,v) \mbox{ is a polyhedral pair and $f$,$v$ satisfy } (\ref{trivbound})\}.
\]
The geometric interpretation of $R(S,U)$ is given in the left panel of
Figure \ref{fig:chart}. Since (\ref{trivbound}) holds for any polyhedron $P \in  (S,U)^E$, we immediately have the trivial lower bound for mechanical complexity:
\begin{equation}\label{eq:lowerbound}
C(S,U) \geq 2R(S,U).
\end{equation}
Based on Definition \ref{def:pair}, the function $R(S,U)$ can be expressed as
\begin{equation}\label{Rformula}
R(S,U)= \left\{
\begin{array}{rcl}
\lceil \frac{S}{2}\rceil -U+2, & \mbox{ if } & S > 4 \mbox{ and } S > 2U-4, \\
\lceil \frac{U}{2}\rceil -S+2, & \mbox{ if }  & U > 4 \mbox { and } U > 2S-4, \\
 8-S-U, &  \mbox { if }  & S,U \leq 4, \\
0 &  & \mbox{ otherwise.}
\end{array}
\right.
\end{equation}
           							
\end{rem}
\noindent Our main result is Theorem~\ref{thm:main1}, stating that this bound is sharp if $S,U>1$:
\begin{thm}\label{thm:main1}
Let $S,U \geq 2$ be positive integers. Then
C(S,U) = 2R(S,U).
\end{thm}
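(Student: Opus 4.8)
The lower bound $C(S,U)\ge 2R(S,U)$ is precisely Remark~\ref{rem:lowerbound}, so the work is the matching upper bound: for each $S,U\ge 2$ one must exhibit a polyhedron $P\in(S,U)^E$ with $C(P)=2R(S,U)$. Since $e=f+v-2$ by Euler and $H=S+U-2$ by (\ref{Poincare}), we have $C(P)=2\bigl(f(P)+v(P)-S-U\bigr)$, so this is the same as realizing $(S,U)^E$ by a polyhedron whose pair $(f,v)$ attains the minimum defining $R(S,U)$. From (\ref{Rformula}) that optimal pair is $(f,v)=(S,U)$ when $(S,U)$ is a polyhedral pair (the case $R=0$), $(f,v)=(S,\lceil S/2\rceil+2)$ when $S$ is too large for the given $U$, symmetrically $(f,v)=(\lceil U/2\rceil+2,U)$ when $U$ is too large, and $(f,v)=(4,4)$ when $S,U\le 4$. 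Accordingly I would split the construction into: (a)~for every polyhedral pair $(f,v)$, a polyhedron with $f$ faces each carrying a stable and $v$ vertices each carrying an unstable equilibrium; (b)~a polyhedron with $S$ faces, all stable, of which exactly $U$ of the $\lceil S/2\rceil+2$ vertices are unstable, plus its polar form (c); and (d)~explicit tetrahedra and pentahedra for the finitely many small classes.

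For (a) --- which also gives the ``if'' half of the zero-complexity statement of the abstract --- the plan is to realize the combinatorial type, which exists by Theorem~\ref{thm:steinitz}, in \emph{midscribed} position. By the Koebe--Andreev--Thurston theorem every polyhedral type has a representation with all edges tangent to a common sphere, and one normalizes it, using a M\"obius transformation of the sphere, so that the sphere's center is the centroid of the solid --- the existence of such a normalization being a degree/fixed-point argument on the M\"obius group, parallel to the classical centering of the tangency points. In a midscribed polyhedron with sphere center $O$, the foot of the perpendicular from $O$ to each face plane is the incenter of that face, hence relatively interior, so every face is a stable equilibrium; and at each vertex the edges are tangent lines to the sphere, so their directions span a subcone of the tangent cone to the sphere from that vertex, whence the direction from $O$ to the vertex lies in the interior of the normal cone there, making every vertex an unstable equilibrium. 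With $O$ the centroid this yields $S=f$, $U=v$, $C=0$.

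For (b) and (c) one needs the prescribed combinatorial type with every face stable but only $U<v$ vertices in equilibrium. I would take the midscribed, centroid-centered model from (a) --- in which all $v$ vertices are unstable --- and deform it \emph{within its combinatorial class} so as to annihilate exactly $R=v-U$ unstable equilibria while keeping all $f$ faces stable. Stability of a face is an open condition and so persists under controlled deformation; the loss of a vertex equilibrium is a fold bifurcation in which that vertex and an adjacent saddle-type (edge) equilibrium coalesce and disappear, decreasing $U$ and $H$ by one and leaving $S$ fixed, in accordance with (\ref{Poincare}). Carrying out $R$ such cancellations --- concretely by drawing the appropriate vertices slightly toward the centroid, or by an explicit one-parameter perturbation of the seed --- while keeping the faces at the moved vertices robustly stable lands $P$ in $(S,U)^E$ with the required $(f,v)$. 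Case (d) is a finite check: the nine tetrahedra realizing $\{2,3,4\}\times\{2,3,4\}$ (the regular tetrahedron handling $(4,4)$) together with the pentahedra needed for the few classes, such as $(5,2)$, that lie below every tetrahedral seed.

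The combinatorial bookkeeping of $(f,v,S,U)$ through these operations is routine; the main obstacle is quantitative control of each step. One must check that under a perturbation the centroid moves by a controlled amount, that the foot of the centroid stays strictly interior to every face required to remain stable (and never lands on an edge), that the centroid-to-vertex direction stays inside --- or, for the vertices to be spoiled, cleanly leaves --- the relevant normal cone, and that no unintended equilibrium is created or destroyed. The most delicate point is meeting the two competing requirements in (b) simultaneously: keeping \emph{every} face stable while forcing \emph{exactly} $R$ vertices out of equilibrium. Minor further nuisances are the parity of $\lceil S/2\rceil$ (which dictates whether an even- or odd-gon seed is used) and the handful of low classes not reached by the infinite families, for which explicit polyhedra must be produced. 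The monostatic extremes $S=1$ or $U=1$, and the G\"omb\"oc-type class $(1,1)^E$, are left out of Theorem~\ref{thm:main1} exactly because no such uniform construction is available there.
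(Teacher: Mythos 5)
Your reduction of the theorem to the lower bound of Remark~\ref{rem:lowerbound} plus the realization of the optimal $(f,v)$ pair in each class is correct and matches the paper's strategy. The problem is that your step (a), on which (b) and (c) are then built, rests on a claim that this paper explicitly flags as open: that every combinatorial type admits a Koebe (midscribed) realization whose tangency sphere is centered at the center of mass of the \emph{homogeneous solid}. The paper proves exactly your geometric observations about midscribed polyhedra (Proposition~\ref{prop:Koebe}: every face, edge and vertex is in equilibrium with respect to the sphere center $o$), but then notes that the available M\"obius-centering results \cite{Brightwell, Langi} only place $o$ at the centroid of the tangency points or of the $k$-skeleton for $k=0,1,2$, and states that ``to find a homogeneous representative appears to be a challenge.'' The degree/fixed-point argument you invoke does not transfer painlessly from the tangency-point barycenter to the solid centroid, because the polyhedron itself (not just a configuration of points on the sphere) changes and can degenerate or become unbounded as the M\"obius transformation approaches the boundary of its parameter ball, so the boundary behaviour needed for the topological argument is not controlled. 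Your step (b) has a second, independent gap: the ``fold bifurcation'' that is supposed to annihilate exactly $R$ vertex equilibria while keeping every face stable is precisely the delicate quantitative issue, which you name but do not resolve.

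For comparison, the paper sidesteps Koebe polyhedra entirely. It seeds the polyhedral pairs with right pyramids over regular $(S-1)$-gons (which lie in $(S,S)^E$ with zero complexity by symmetry) and reaches all other polyhedral pairs by two inductive local moves with explicit error control: truncating a $3$-valent vertex (one more face, two more vertices, all equilibria preserved; Lemma~\ref{lem:inductive_step_12}) and erecting a flat tetrahedron on a triangular face (Lemma~\ref{lem:inductive_step_21}), the centroid drift being bounded by Lemma~\ref{lem:magnitude}. Non-polyhedral classes are then reached by further truncations that \emph{create} unstable vertices (Lemma~\ref{lem:face_trunc}, with the Cube Separation Theorem used to tune the cutting plane), together with polarity (Lemmas~\ref{lem:simplices} and~\ref{lem:polarity}) or the dual construction for $S>U$; the finitely many small classes are settled by explicit tetrahedra and pentahedra, as you also propose. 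To salvage your route you would either have to prove the solid-centroid centering of Koebe polyhedra (an interesting open question in its own right) or replace step (a) by explicit seeds and controlled local moves of this kind.
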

We remark that, as a consequence of Theorem \ref{thm:main1}, $C(S,U)=0$ if and only if $ (S,U)$ is a polyhedral pair.
For monostatic equilibrium classes ($S=1$ or $U=1$) we cannot provide a sharp value for their mechanical complexity. However, we will provide
an upper bound for their complexity, which differs from $2R(S,U)$ only by a constant:
\begin{thm}\label{thm:SUbounds}
If $S \geq 4$ then $C(S,1) \leq  59 + (-1)^S + 2 R(S,1)$; if $U \geq 4$ then $C(1,U) \leq 90+2 R(1,U)$.
\end{thm}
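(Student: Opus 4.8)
\emph{Reduction to an explicit construction.} For every convex polyhedron $P$ the Euler formula $(\ref{Euler})$ and the Poincar\'e--Hopf formula $(\ref{Poincare})$ give $e=f+v-2$ and $H=S+U-2$, so that
\[
C(P)=n(P)-N(P)=\bigl(2(f+v)-2\bigr)-\bigl(2(S+U)-2\bigr)=2\bigl(f+v-S-U\bigr).
\]
Since $C(S,U)$ is an infimum over $(S,U)^E$, it therefore suffices to exhibit, for each $S\ge 4$, one polyhedron $P\in(S,1)^E$ with $f(P)+v(P)\le R(S,1)+S+1+\tfrac12\bigl(59+(-1)^S\bigr)$, and, for each $U\ge 4$, one polyhedron in $(1,U)^E$ with $f+v\le R(1,U)+U+46$. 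By $(\ref{Rformula})$ these targets grow like $\tfrac32 S+O(1)$ and $\tfrac32 U+O(1)$; together with the trivial lower bound $(\ref{eq:lowerbound})$ this is what ``asymptotically sharp'' means. In particular the construction must be about as efficient as the polyhedral--pair constraint $v\ge f/2+2$ of Theorem~\ref{thm:steinitz} permits, so essentially every face (resp.\ essentially every vertex) of the constructed polyhedron has to carry a stable (resp.\ unstable) equilibrium.

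\emph{Seed plus augmentation.} The plan is to start from a bounded--size ``monostatic seed'': for the class $(S,1)^E$ a convex polyhedron $Q$ with $U(Q)=1$ (a mono--unstable polyhedron, which can be distilled from the monostatic constructions of \cite{Conway,Bezdek,Reshetov}, taking care that the distinguished point really is the center of mass), and for $(1,U)^E$ a mono--stable polyhedron from the same sources; these seeds produce the additive constants $\approx 59$ and $\approx 90$. I would then grow $S$ (resp.\ $U$) by two local moves chosen so as not to disturb the unique unstable (resp.\ stable) equilibrium. The first is \emph{tenting}: replace a non--triangular face by a shallow two--facet roof whose ridge joins two of its vertices, arranged so that the foot of the perpendicular from the center of mass falls on the ridge; this adds one face, no vertex, one stable equilibrium and one saddle, so $\Delta C=0$. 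The second is \emph{edge splitting}: push a point in the relative interior of an edge outward to a new vertex and re--triangulate the two incident faces; with the push--out direction and magnitude chosen correctly this adds one vertex, two faces, two stable equilibria and two saddles, so $\Delta(f+v)=3$, $\Delta S=2$, $\Delta C=2$. Tenting is free but can be applied only finitely often — it stops once every face is a triangle, i.e.\ when $f=2v-4$ — so edge splitting is the real engine and is exactly what pins the growth rate to the optimal ratio $\tfrac32$. Iterating these moves (and their analogues, obtained by ``spiking'' a vertex or an edge, for $(1,U)^E$) reaches every sufficiently large $S$, resp.\ $U$; a single extra move absorbs the residual parity, which is the origin of the $(-1)^S$ term. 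A careful count of $f$ and $v$ along the construction then yields the explicit constants.

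\emph{The main obstacle.} The crux is verifying that each surgery really leaves the class equal to $(S,1)^E$, i.e.\ that no new vertex becomes an unstable equilibrium while every new face is a \emph{nondegenerate} stable one. Both requirements are quantitative and pull against each other: the inserted tent or wedge must be steep enough that the perpendicular from the (slightly shifted) center of mass lands in the relative interior of each new face, yet flat enough that the normal cones at the new vertices do not capture the direction to the center of mass. Controlling this ``steep but flat'' window — together with bounding the displacement of the center of mass under each move, so that the pre--existing equilibria, and in particular the uniqueness of the unstable one, survive — is where the geometry is delicate, and is why the admissible gadgets are constrained and the constants end up as large as they are.

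\emph{Why the two bounds differ.} One cannot deduce the second inequality from the first by polar duality, because polarity does not fix the center of mass; consequently the mono--stable case requires its own (larger) seed and its own augmenting moves, and I would expect the bulk of the work for Theorem~\ref{thm:SUbounds} to lie precisely in making the two seed polyhedra explicit and in carrying out the steep--but--flat estimate for the augmenting surgeries in each case separately, the saddle counts then being forced by $(\ref{Poincare})$.
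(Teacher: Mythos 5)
Your overall strategy --- a bounded monostatic seed followed by iterated local surgeries that add faces and vertices and equilibria at the optimal rate $\Delta(f+v)/\Delta(S+U)=3/2$ without disturbing the unique equilibrium --- is exactly the paper's strategy, and your accounting $C(P)=2(f+v-S-U)$ and your identification of the parity term are correct. But two steps that you treat as available are in fact the substance of the proof. First, the seed for $(S,1)^E$: no mono-unstable polyhedron exists in the literature you invoke --- Conway--Guy, Bezdek and Reshetov all constructed \emph{mono-stable} bodies ($S=1$), and, as you yourself observe, polarity does not transport the center of mass, so their examples cannot simply be ``distilled'' into a $U=1$ example. The paper has to build one from scratch: it takes the Conway planar spiral polygon (the $b=0$ degeneration of $P_C$) and erects a very flat mirror-symmetric pyramid over it with apex near the bottom edge, pushing the center of mass low enough that exactly one vertex remains an unstable equilibrium; this yields $P_3\in(3,1)^E$, $(18,18)^C$ with $C(P_3)=64$ (and $P_2\in(2,1)^E$ by breaking the symmetry), and the constant $59+(-1)^S$ is read off from this specific polyhedron. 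Without an explicit such seed your first inequality has no starting point.

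Second, the augmentation moves. Your ``edge splitting'' and ``spiking'' gadgets are combinatorially the paper's Lemma~\ref{lem:face_trunc} and Lemma~\ref{lem:vertex_build} (truncate a face near its two saddle-bearing edges to gain two unstable vertices, or erect a shallow pyramid over a triangle containing the stable point to gain two stable faces), and you correctly isolate the obstacle: one must hit a codimension-two condition (the foot of the perpendicular from the \emph{moving} center of mass must land on the new edge, and in the right position along it) while preserving all old equilibria and creating no new unstable vertex. You flag this as ``delicate'' but do not resolve it; the paper resolves it by parametrizing a two-parameter family of truncations (resp. apices) and applying the Cube Separation Theorem, a Brouwer-type fixed-point argument, together with a Lipschitz estimate on $c(s,t)$ that converts the exact solution into a robust, open one. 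A further detail you would hit in practice: $P_C$ does not satisfy the general-position hypotheses of Lemma~\ref{lem:face_trunc} (on its narrow rectangular face the stable point and the two saddle points are collinear), so a preliminary perturbation of the two $17$-gonal faces is needed before the iteration can even start. So the architecture is right, but the two load-bearing components --- the explicit $(3,1)^E$ seed and the fixed-point argument legitimizing each surgery --- are missing.
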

We also improve the lower bound (\ref{eq:lowerbound}) in some of these classes by generalizing a theorem of Conway \cite{Dawson} about the non-existence of a homogeneous tetrahedron with only one stable equilibrium point. We state our result in the following form:
\begin{thm}\label{thm:tetra}
Any homogeneous tetrahedron has $S \geq 2$ stable and $U \geq 2$ unstable equilibrium points.
\end{thm}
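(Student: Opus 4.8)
The plan is to place the centre of mass at the origin, so that the four vertices satisfy $v_1+v_2+v_3+v_4=0$, and to reduce both assertions to a single elementary lemma about four vectors in $\Re^3$ summing to zero. The first step is to record two coordinate descriptions of the equilibria (in the sense of Definition~\ref{def1}) of a homogeneous tetrahedron $P$ with respect to $c=0$. A vertex $v_i$ is a nondegenerate unstable equilibrium if and only if the plane through $v_i$ orthogonal to $v_i$ strictly supports $P$ there, i.e. $v_i\cdot v_j<|v_i|^2$ for every $j\ne i$. For the face $F_i$ opposite $v_i$, with unit outward normal $n_i$ and distance $h_i>0$ from $c$, the foot of the perpendicular from $c$ onto $\aff F_i$ is $h_i\,n_i$; expanding this point in barycentric coordinates over the three vertices $v_j$, $j\ne i$, and using $v_j\cdot n_i=h_i$ for $j\ne i$ together with $v_i\cdot n_i=-3h_i$ (because $v_i=-(v_j+v_k+v_l)$ while $v_j,v_k,v_l$ lie on the plane of $F_i$), one finds that the barycentric coordinate of $h_i\,n_i$ at $v_l$ equals $\frac14\!\left(1-\frac{h_i}{h_l}(n_i\cdot n_l)\right)$. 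Hence $F_i$ carries a stable equilibrium precisely when $h_i(n_i\cdot n_l)<h_l$ for all $l\ne i$, which, setting $u_i:=n_i/h_i$ (so that $|u_i|=1/h_i$), reads $u_i\cdot u_l<|u_i|^2$ for all $l\ne i$ --- exactly the shape of the vertex condition.

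The second ingredient is the identity $\sum_{i=1}^4 u_i=\sum_{i=1}^4 n_i/h_i=0$. It follows from two standard facts about the tetrahedron with centroid $c$: the vector area $\sum_i(\area F_i)\,n_i$ of the closed surface $\bd P$ vanishes, and the centroid subdivides $P$ into the four sub-tetrahedra $\conv(\{c\}\cup F_i)$ of equal volume $V/4$, so that $\frac13(\area F_i)h_i=V/4$ and $\area F_i$ is proportional to $1/h_i$. Thus both $(v_1,v_2,v_3,v_4)$ and $(u_1,u_2,u_3,u_4)$ are quadruples in $\Re^3$ summing to zero, and the two characterisations above are literally the same inequality.

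The core is then the lemma: if $w_1,w_2,w_3,w_4\in\Re^3$ satisfy $w_1+w_2+w_3+w_4=0$ and $|w_1|\le|w_2|\le|w_3|\le|w_4|$, then $w_i\cdot w_j\le|w_i|^2$ for $i\in\{3,4\}$ and every $j\ne i$. For $j$ with $|w_j|\le|w_i|$ this is Cauchy--Schwarz; the only remaining case is $(i,j)=(3,4)$, where $w_3\cdot w_4=-w_3\cdot w_1-w_3\cdot w_2-|w_3|^2\le|w_3|\,|w_1|+|w_3|\,|w_2|-|w_3|^2\le|w_3|^2$. Equality throughout would force $w_1=w_2=-w_3$ and hence $w_4=w_3$; this is impossible for $w_i=v_i$ (distinct vertices) and for $w_i=u_i$ (no two faces of a tetrahedron are coplanar), so the inequalities are in fact strict. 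Applying the lemma to $w_i=v_i$ shows that the two vertices farthest from $c$ are nondegenerate unstable equilibria, so $U\ge2$; applying it to $w_i=u_i$ shows that the two faces nearest $c$ (those with largest $|u_i|$) carry stable equilibria, so $S\ge2$, which in particular recovers Conway's theorem.

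I expect the one substantive step to be the observation that the stable-face condition can be recast in exactly the same algebraic form as the unstable-vertex condition; this hinges on the identity $\sum n_i/h_i=0$, and once it is available the rest of the argument is forced. It is also worth noting that the final substitution step of the lemma breaks down for five or more summands, which is precisely why the statement is special to tetrahedra.
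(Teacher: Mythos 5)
Your proof is correct, and it takes a genuinely different route from the paper's. The paper deduces Theorem~\ref{thm:tetra} from Corollary~\ref{cor:antistable}: it imports Conway's theorem \cite{Dawson} that a homogeneous tetrahedron has at least two stable faces, and transfers that statement to the vertices via polarity (Lemma~\ref{lem:simplices}, which shows that a simplex centered at $o$ has polar centered at $o$, and Lemma~\ref{lem:polarity}, which matches equilibria of $P$ and $P^{\circ}$). Your argument is self-contained: the identity $\sum_i n_i/h_i=0$ and the barycentric computation put the stable-face and unstable-vertex conditions into the identical form $w_i\cdot w_j<|w_i|^2$ for a quadruple summing to zero, and the four-vector lemma settles both at once. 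In effect you have produced an explicit coordinate-level instance of the paper's polarity duality specialized to tetrahedra (your $u_i=n_i/h_i$ are essentially the vertices of the polar, cf.\ the computation inside Lemma~\ref{lem:simplices}), together with an elementary reproof of Conway's theorem rather than a citation of it. The paper's route buys generality and reusability --- Lemma~\ref{lem:polarity} holds for arbitrary polytopes in every dimension and is used again in Section~\ref{subsec:nonpolyhedralpair} --- while yours buys a short, verifiable argument that moreover identifies \emph{which} equilibria must occur (the two vertices farthest from the centroid and the two faces nearest to it) and makes transparent why the mechanism is special to four summands. I checked the details: the barycentric coordinate $\frac14\left(1-\frac{h_i}{h_l}\,n_i\cdot n_l\right)$ is correct (the three coordinates sum to $1$ precisely because $\sum_l u_l=0$), the equality cases of the lemma are properly excluded by distinctness of vertices and non-coplanarity of faces, and nondegeneracy is handled.
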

\noindent We summarize all results (including those about monostatic classes) in Figure~\ref{fig:chart}.

\begin{figure}[ht]
\begin{center}
\includegraphics[width=0.9\textwidth]{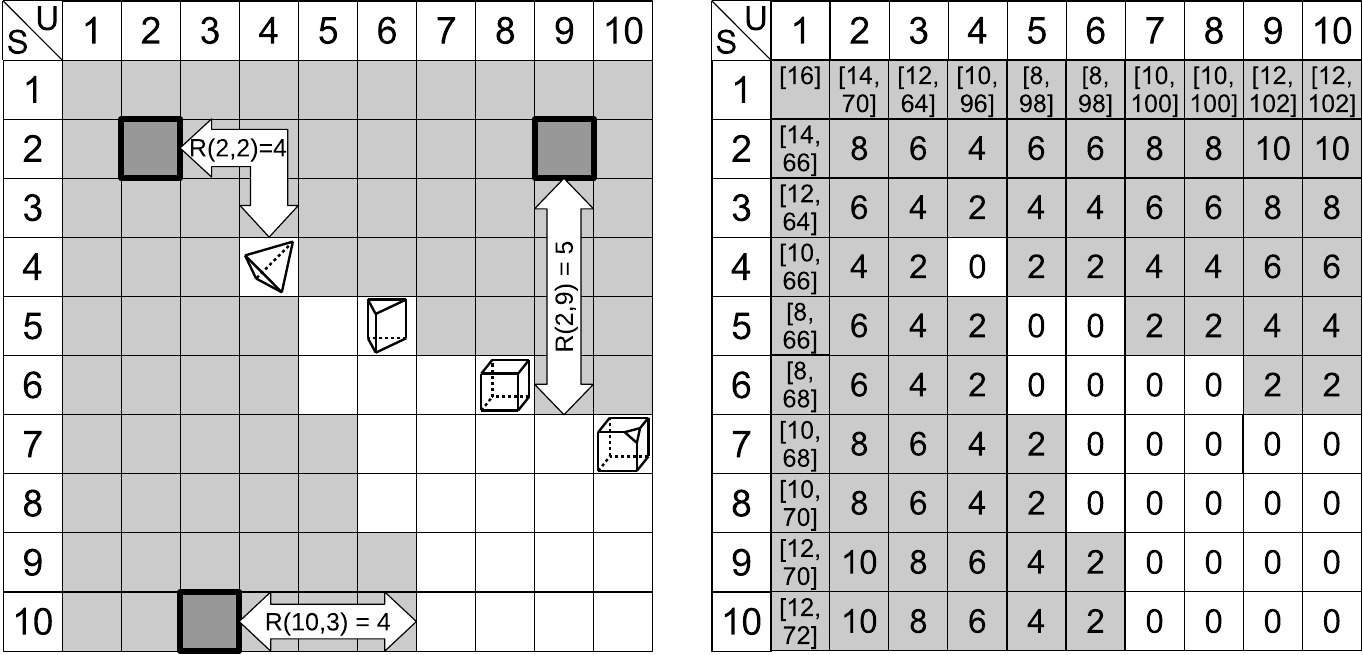}
\caption{Summary of results for $S,U \leq 10$. Left panel: the $(S,U)$ grid with some selected polyhedra as examples. Polyhedral pairs on the $(S,U)$ grid have white background. The function $R(S,U)$ illustrated for classes
$(2,2)^E, (2,9)^E, (10,3)^E.$ Right panel: Mechanical complexity of equilibrium classes 
 $(S,U)^E$.  Polyhedral pairs on the $(S,U)$ grid have white background. Sharp values
for mechanical complexity $C(S,U)$ are given as integers without brackets. In column $U=1$ and row $S=1$ we give bounds.
If two integers are given in square brackets then they are the lower and upper bounds for $C(S,U)$, if only one integer
is given in square brackets then it is the lower bound (and no upper bound is available).}
\label{fig:chart}
\end{center}
\end{figure}

\subsection{Sketch of the proof}
The main idea of the proofs of Theorems ~\ref{thm:main1} and \ref{thm:SUbounds} is to provide explicit constructions for at least one polyhedron $P$ in each class $ (S,U)^E, S,U>1$ with mechanical complexity $C(P)=2R(S,U)$,
in class $(S,1)^E, S\geq 4$ with $C(P)= 59+ (-1)^S + 2 R(S,1)$, and in class $(1,U)^E, U\geq 4$
with $C(P)= 90+2R(1,U)$.
By Definition \ref{defn:class}, such a construction establishes an upper bound for $C(S,U)$.
In case of $S>1$ and $U>1$, by Remark~\ref{rem:lowerbound}, this coincides with the lower bound while
for $S=1$ or $U=1$ the bounds remain separate.

Our proof consists of five parts:

(a) for classes  $(S,S)^E$ with $S\geq 4$, suitably chosen pyramids have zero mechanical complexity (Section \ref{sec:proof}).

(b) for classes  $1 < S\leq 5$ and $1<U \leq 5$, $(S,U)^E \neq (4,4)^E, (5,5)^E$, we provide examples found by computer search (Subsection \ref{subsec:nonpolyhedralpair}, Tables \ref{9classes} and \ref{6classes}).

(c) for polyhedral classes with $S \not= U$, we construct examples by recursive, local manipulations of the pyramids mentioned in (a)  (Subsection~\ref{subsec:polyhedralpair}).

(d) for non-polyhedral classes with  $U > S \geq 6$, we construct examples by recursive, local manipulations starting with polyhedral classes containing simple polyhedra (Subsection~\ref{subsec:nonpolyhedralpair}).

(e) for non-polyhedral classes with $6 \leq U < S$ we provide examples by using the polyhedra
obtained in (d) and the  properties of polarity proved in Section~\ref{sec:prelim}.
We also show how to modify the construction in (d) for this case (Subsection~\ref{subsec:nonpolyhedralpair}).

(f) for monostatic classes with $S=1$ or $U=1$ we provide examples using Conway's polyhedron $P_C$ in class
$(1,4)^E$, we also construct a polyhedron $P_3$ in class $(3,1)$ and subsequently we apply 
recursive, local truncations (Section~\ref{ss:monostatic}).

In Section~\ref{sec:prelim}, we prove a number of  lemmas which help us keep track of the change of the center of mass of a convex polyhedron under local deformations and establish a connection between equilibrium points of a convex polyhedron and its polar.
The local manipulations in our proof may be regarded as generalizations of the algorithm of Steinitz \cite{Grunbaum}.
Figure \ref{fig:proof} summarizes the steps outlined above.

\begin{figure}[ht!]
\begin{center}

\includegraphics[width=\textwidth]{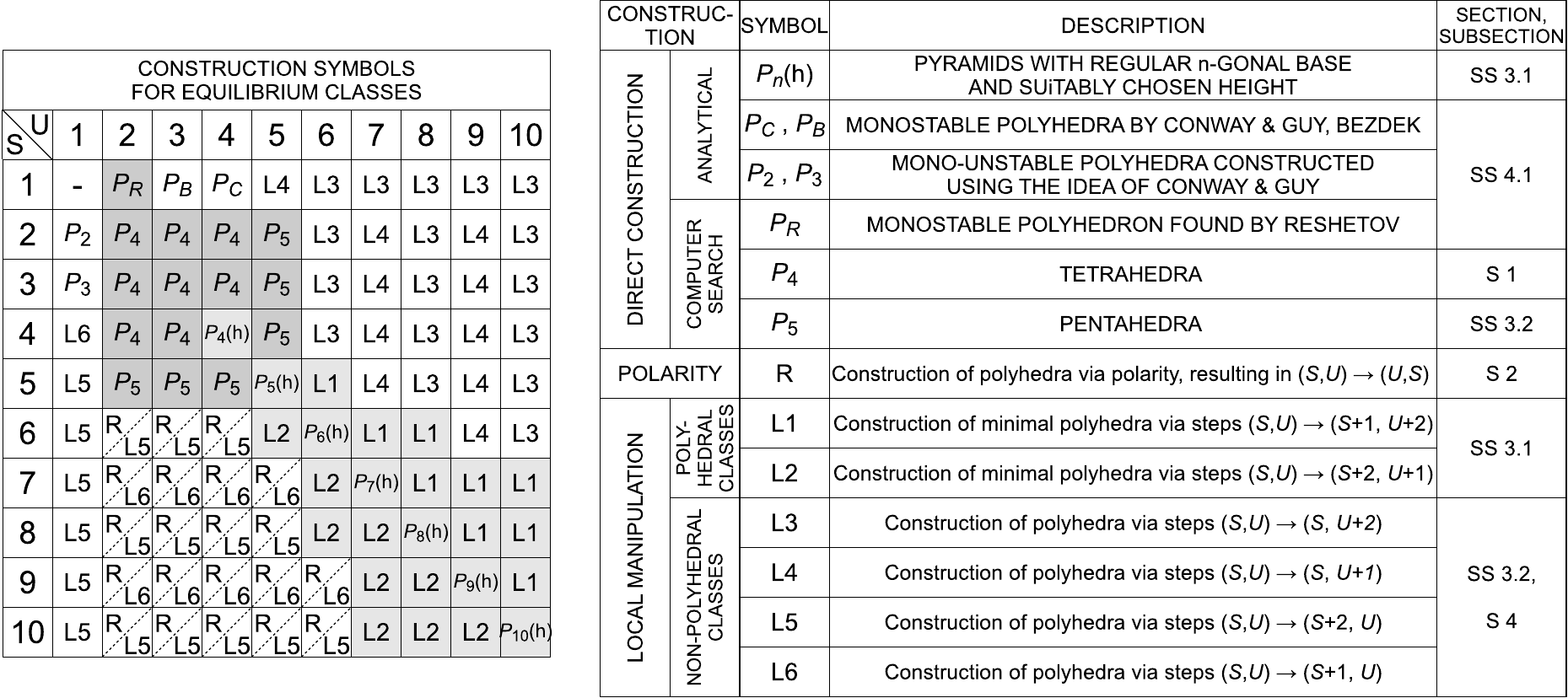}
\caption{Summary of the proof. Left panel: Symbols on the $(S,U)$ grid indicate how polyhedra in the given 
equilibrium class $(S,U)^E$ have been constructed. Dark background corresponds to
classes where polyhedra have been identified by computer search. Light grey background 
corresponds to polyhedral pairs. Symbols are explained in the right panel.
 For $S,U>1$ the indicated constructions provide minimal complexity
and thus the complexity of the class itself. Zero indicates that no polyhedron is known in that class. Right panel: Symbols
in the left panel explained briefly with reference to sections, subsections and sub-subsections of the paper. 
}
\label{fig:proof}
\end{center}
\end{figure}

\begin{figure}[ht!]
\begin{center}
\includegraphics[width=0.8\textwidth]{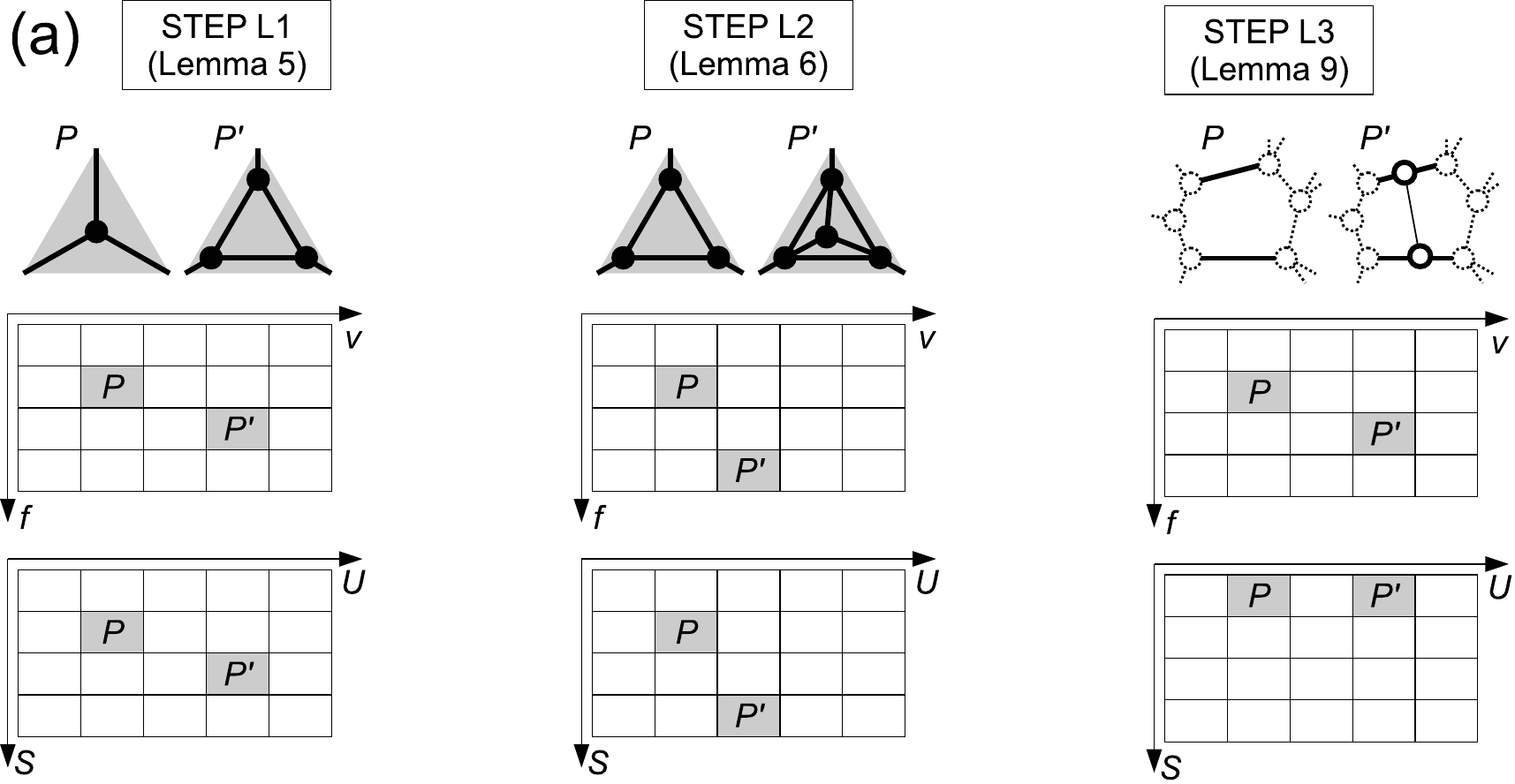}
\vskip 2mm
\includegraphics[width=0.8\textwidth]{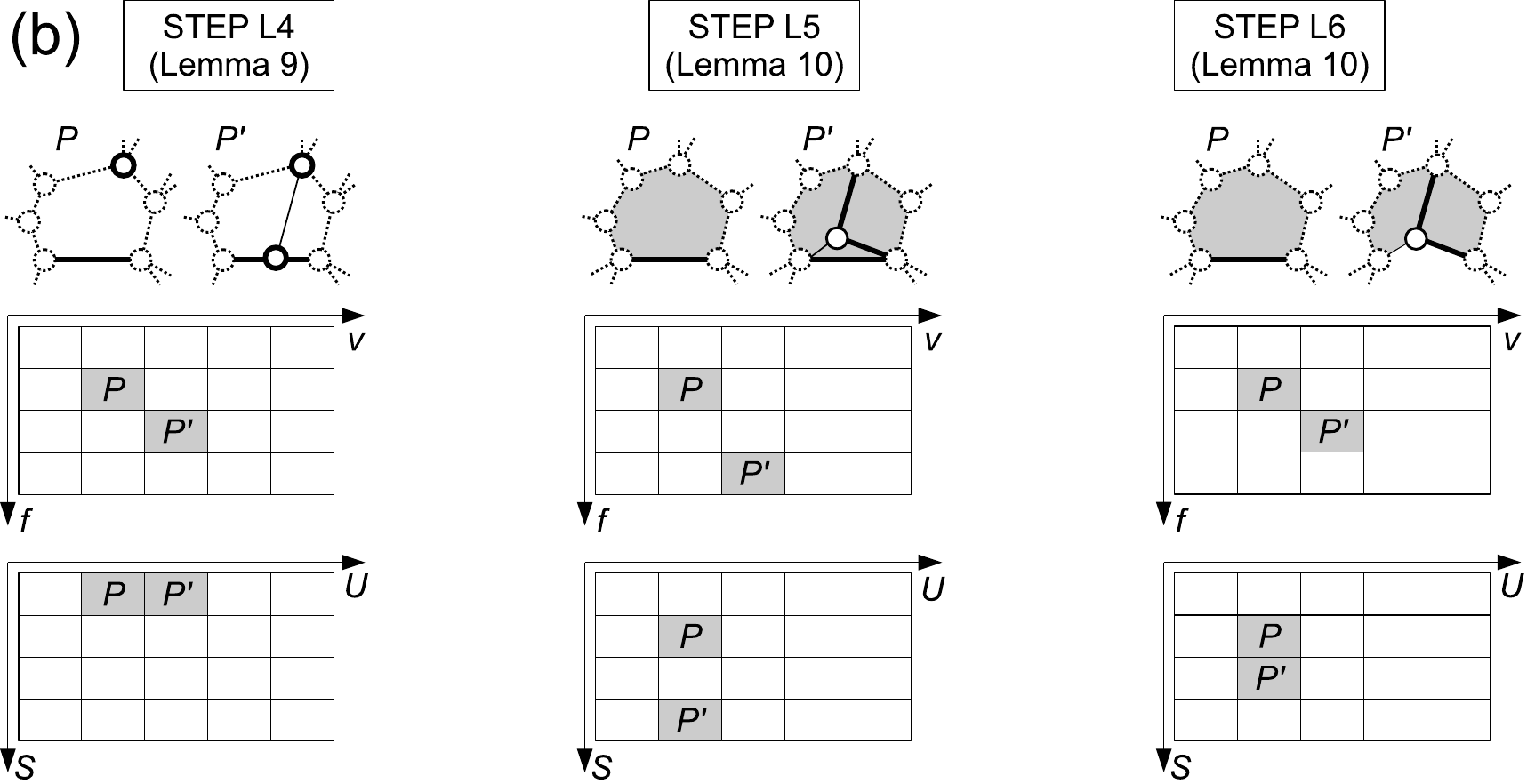}
\caption{Summary of the proof.
(a)-(b) Upper row: schematic picture of local manipulations L1-L6, showing local face structure and equilibria on original and manipulated polyhedra $P$ and $P'$, respectively.  Lower rows: Original and manipulated polyhedra $P$ and $P'$ shown on the $(f,v)$ and $(S,U)$ grids. }
\label{fig:proof1}
\end{center}
\end{figure}

\section{Preliminaries}\label{sec:prelim}

Before we prove some lemmas that we need for Theorem~\ref{thm:main1}, we make a general remark about small truncations:

\begin{rem}\label{rem:smallperturb}
Observe that
\begin{itemize}
 \item[(i)] a nondegenerate (stable) equilibrium point $s_F$ on face $F$ of a convex polyhedron $P$ exists iff the orthogonal projection $s_F$ of $c(P)$ (the center of mass of $P$) onto $F$ is in the relative interior of $F$;
 \item[(ii)] a vertex $q$ is a nondegenerate (unstable) equilibrium point of $P$ iff the plane perpendicular to $q-c(P)$ and containing $q$ contains no other point of $P$;
 \item[(iii)] a nondegenerate equilibrium point $s_E$ on an edge $E$ of $P$ exists iff the orthogonal projection $s_E$ of $c(P)$ onto $E$ is in the relative interior of $E$, and the angle between $c(P)-c_E$ and any of the two faces of $P$ containing $E$ is acute.
\end{itemize}

In the paper, we deal only with a convex polyhedron $P$ which has only nondegenerate equilibria. Then the following observation is used many times in the paper:
\begin{itemize}
\item[(a)] if a vertex $q$ of $P$ is slightly perturbed such that the directions of the edges starting at $q$ change only slightly, then the new vertex is a nondegenerate equilibrium iff $q$ is a nondegenerate equilibrium;
\item[(b)] if an edge $E$ of $P$ is slightly perturbed such that the normal vectors of the two faces containing $E$ change only slightly, then the new edge contains a nondegenerate equilibrium iff $E$ contains a nondegenerate equilibrium;
\item[(c)] if a face $F$ of $P$ is slightly perturbed, then the new face contains a nondegenerate equilibrium iff $F$ contains a nondegenerate equilibrium.
\end{itemize}
It is worth noting that since unstable vertices correspond to local maxima of the Euclidean distance function measured from the center of mass, \emph{any} local perturbation of $P$ yields \emph{at least one} unstable vertex near $q$ in (a). A similar observation can be made for the face $F$ in (c). 
\end{rem}

In the following, $\conv X$, $\aff X$, $\inter X$ and $\cl X$ denote the convex hull, the affine hull, the interior and the closure of the set $X \subset \Re^d$, respectively. The origin is denoted by $o$. For any convex polytope $P$ in $\Re^d$, we denote by $V(P)$ the set of vertices of $P$, and the volume and the center of mass of $P$ by $w(P)$ and $c(P)$, respectively. The polar of the set $X$ is denoted by $X^{\circ}$.

The first three lemmas investigate the behavior of the center of mass of a convex polyhedron under local deformations.

\begin{lem}\label{lem:magnitude}
Let $P$ be a convex polyhedron and let $q$ be a vertex of $P$. Let $P_{\varepsilon}$ be a convex polyhedron such that $P_{\varepsilon} \subset P$, and every point of $P \setminus P_{\varepsilon}$ is contained in the $\varepsilon$-neighborhood of $q$. 
Let $c = c(P)$ and $c_{\varepsilon} = C(P_{\varepsilon})$.
Then there is a constant $\gamma > 0$, independent of $\varepsilon$, such that $|c_{\varepsilon}-c| \leq \gamma \varepsilon^3$ holds for every polyhedron $P_{\varepsilon}$ satisfying the above conditions.
\end{lem}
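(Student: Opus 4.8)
The plan is to estimate $|c_\varepsilon - c|$ by splitting the mass of $P$ into the part that survives in $P_\varepsilon$ and the part that is cut away, all of which lies in the ball $B = B(q,\varepsilon)$. Write $D = P \setminus P_\varepsilon$, so $D \subset B \cap P$. Since $P_\varepsilon$ is convex and contained in $P$, and every point of $D$ lies within $\varepsilon$ of the vertex $q$, the region $D$ is squeezed into the cone of $P$ at $q$ intersected with $B$; in particular $w(D) \le \kappa \varepsilon^3$ for a constant $\kappa$ depending only on the solid angle of $P$ at $q$ (not on $\varepsilon$). The total volume $w(P)$ is a fixed positive number, and for $\varepsilon$ small enough $w(P_\varepsilon) \ge w(P)/2 > 0$, so all denominators below are bounded away from $0$.

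Next I would use the standard mass-splitting identity for the center of mass: since $P = P_\varepsilon \cup D$ with disjoint interiors,
\[
w(P)\, c = w(P_\varepsilon)\, c_\varepsilon + w(D)\, c(D),
\]
hence
\[
c_\varepsilon - c = \frac{w(D)}{w(P_\varepsilon)}\bigl(c - c(D)\bigr).
\]
Now $c$ is a fixed point of $\inter P$ and $c(D) \in \conv D \subset B(q,\varepsilon)$, so $|c - c(D)| \le |c-q| + \varepsilon \le \diam P + 1$ is bounded by a constant independent of $\varepsilon$ (for $\varepsilon \le 1$, say). Combining this with $w(D) \le \kappa \varepsilon^3$ and $w(P_\varepsilon) \ge w(P)/2$ gives
\[
|c_\varepsilon - c| \le \frac{\kappa \varepsilon^3}{w(P)/2}\,(\diam P + 1) =: \gamma\, \varepsilon^3,
\]
with $\gamma$ depending only on $P$, as required.

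The only genuine point needing care is the bound $w(D) \le \kappa \varepsilon^3$: one must argue that the excised region, being contained in $P \cap B(q,\varepsilon)$, has volume $O(\varepsilon^3)$ with a constant uniform in $\varepsilon$. This is immediate from $w(D) \le w(B(q,\varepsilon)) = \tfrac{4}{3}\pi\varepsilon^3$, so in fact one may take $\kappa = \tfrac{4}{3}\pi$ and no information about the cone angle at $q$ is needed. Thus the main (and essentially only) obstacle is purely bookkeeping: fixing a threshold $\varepsilon_0$ below which $w(P_\varepsilon) \ge w(P)/2$ and $|c-c(D)|$ is controlled, and then absorbing the finitely many remaining values of $\varepsilon \in [\varepsilon_0, \infty)$ — but for those the claim is vacuous or trivial since $P_\varepsilon \subset P$ forces $\varepsilon$ to be at most $\diam P$, and one simply enlarges $\gamma$ to cover this compact range. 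I would present the clean version with $\kappa = \tfrac43\pi$ and a single explicit $\varepsilon_0$.
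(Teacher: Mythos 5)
Your proposal is correct and follows essentially the same route as the paper: the same mass-splitting identity $w(P)c = w(P_\varepsilon)c_\varepsilon + w(D)c(D)$, the bound $w(D)=O(\varepsilon^3)$ from containment in the $\varepsilon$-ball around $q$, and the boundedness of $|c-c(D)|$. The only cosmetic difference is that the paper normalizes $c=o$ first; the estimates are otherwise identical.
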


\begin{proof}
Without loss of generality, let $c=o$, $\bar{c}_{\varepsilon} = c(\cl(P\setminus P_{\varepsilon}))$, $w = w(P)$ and $w_{\varepsilon} = w(P_{\varepsilon})$.
Then $o = w_{\varepsilon} c_{\varepsilon} +  (w-w_{\varepsilon}) \bar{c}_{\varepsilon}$, implying that $c_{\varepsilon} = - \frac{w-w_{\varepsilon}}{w_{\varepsilon}} \bar{c}_{\varepsilon}$.
Note that for some $\gamma' >0$ independent of $\varepsilon$, we have $0 \leq \frac{w-w_{\varepsilon}}{w_{\varepsilon}} < 2\frac{w-w_{\varepsilon}}{w} \leq \gamma' \varepsilon^3$.
Furthermore, for some $\gamma'' > 0$, $|q - \bar{c}_{\varepsilon}| \leq \gamma'' \varepsilon$, which yields that $|\bar{c}_{\varepsilon}|$ is bounded.
Thus, the assertion readily follows.
\end{proof}

\begin{lem}\label{lem:Jacobian_face}
Let $F$ be a triangular face of the convex polyhedron $P$, and assume that each vertex of $P$ lying in $F$ has degree $3$.
Let $q_1$, $q_2$ and $q_3$ be the vertices of $P$ on $F$, and for $i=1,2,3$, let $L_i$ denote the line containing the edge of $P$ through $q_i$ that is not contained in $F$. For $i=1,2,3$ and $\tau \in \Re$, let $q_i(\tau)$ denote the point of $L_i$ at the signed distance $\tau$ from $q_i$, where we orient each $L_i$ in such a way that $q_i(\tau)$ is a point of $P$ for any sufficiently small negative value of $\tau$.
Let $U$ be a neighborhood of $o$, and for any $t=(\tau_1,\tau_2,\tau_3) \in U$, let $W(t) = w(P(t))$ and $C(t)=c(P(t))$, where
$P(t) = \conv \left( (V(P) \setminus \{q_1,q_2,q_3\}) \cup \{ q_1(\tau_1) ,q_2(\tau_2),q_3(\tau_3) \} \right)$.
Then the Jacobian of the function $W(t) C(t)$ is nondegenerate at $t=o$.
\end{lem}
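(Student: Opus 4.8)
The plan is to compute the Jacobian of $G(t) := W(t)C(t)$ at $t=o$ directly by differentiating the volume-weighted centroid, which is an integral over $P(t)$, and to exploit the explicit description of $P(t)$ as $P$ with three small corner tetrahedra sliced off (or glued on) near the vertices $q_1,q_2,q_3$. Concretely, since each $q_i$ has degree $3$ and the face $F$ is a triangle, moving $q_i$ along $L_i$ by $\tau_i$ changes $P$ only inside the (nondegenerate, since $F$ is a genuine triangular face with three genuine incident non-coplanar edges) trihedral corner at $q_i$; for small $\tau_i$ the set-difference $P(t)\,\triangle\,P$ near $q_i$ is, to leading order, a tetrahedron-like wedge whose volume is $O(\tau_i)$ with a nonzero linear term. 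I would write, for $i=1,2,3$,
\[
G(t) = G(o) + \sum_{i=1}^{3}\bigl(\pm\bigr)\Bigl(\int_{\Delta_i(\tau_i)} x\,dx\Bigr) + o(|t|),
\]
where $\Delta_i(\tau_i)$ is the wedge swept near $q_i$ as the vertex slides, and then differentiate each term in $\tau_i$ at $\tau_i=0$.

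The key step is to show that $\partial_{\tau_i} G\big|_{o}$ is a nonzero vector, and that the three vectors $\partial_{\tau_1}G|_o,\partial_{\tau_2}G|_o,\partial_{\tau_3}G|_o$ are linearly independent. For the first point: as $q_i$ moves by $d\tau_i$ along the unit direction $u_i$ of $L_i$, the boundary face triangle near $q_i$ sweeps out an infinitesimal slab; the rate of change of $\int_{P(t)} x\,dx$ equals the integral of $x$ over that moving boundary piece times the normal speed, i.e. it is $q_i$ (the location of the corner, to leading order) times the rate of change of the clipped volume, and the latter has a nonzero derivative in $\tau_i$ because the three edges at $q_i$ are linearly independent (the corner is a genuine simplicial cone, not flat). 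So $\partial_{\tau_i}G|_o = \alpha_i q_i + (\text{lower-order position corrections})$ with $\alpha_i\neq 0$; more carefully one gets $\partial_{\tau_i}G|_o$ proportional to $q_i$ up to terms that vanish as the corner shrinks, but since we are differentiating at the actual polyhedron we instead argue that $\partial_{\tau_i}G|_o$ is a specific nonzero multiple of a vector pointing into the cone at $q_i$. For linear independence: the three points $q_1,q_2,q_3$ are the vertices of the triangular face $F$, hence affinely independent; combined with the fact that each $\partial_{\tau_i}G|_o$ is controlled by the corner at $q_i$ alone (the supports of the three deformations are disjoint for small $t$), the three derivative vectors cannot satisfy a nontrivial linear relation — a relation $\sum \lambda_i \partial_{\tau_i}G|_o = 0$ would force a linear dependence among three vectors attached to three affinely independent corners with disjoint supports, which one rules out by a direct computation of the $3\times 3$ determinant in coordinates adapted to $F$.

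I expect the main obstacle to be making the leading-order identification of $\partial_{\tau_i} G|_o$ rigorous and, in particular, nailing down that its determinant does not accidentally vanish: the naive leading term of each $\partial_{\tau_i}G|_o$ is $c_i\, q_i$ for a scalar $c_i\neq 0$, and the $q_i$ are affinely (not linearly) independent, so $\det[q_1\,q_2\,q_3]$ could in principle be zero if $o\in\aff F$ — which it is not, since $c(P)=o\in\inter P$ and $F\subset\bd P$, so $o\notin\aff F$ and $q_1,q_2,q_3$ are genuinely linearly independent. Thus the determinant is $c_1c_2c_3\det[q_1\,q_2\,q_3]\neq 0$ up to a controlled error, and one finishes by a continuity/perturbation argument (or by carrying the exact expressions, using Lemma~\ref{lem:magnitude}-type estimates to bound the discarded terms) to conclude the Jacobian of $W(t)C(t)$ is nondegenerate at $t=o$. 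The bookkeeping of signs (whether a corner is added or removed, depending on the orientation convention for $L_i$) is routine and does not affect nondegeneracy.
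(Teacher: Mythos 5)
Your overall strategy --- expand $G(t)=W(t)C(t)$ to first order as $G(o)$ plus the signed integrals of $x$ over the three regions swept by the deformations, and then show that the three partial derivatives are linearly independent --- is exactly the paper's. The problem is your geometric identification of the swept regions, and it breaks the key computation. When $q_i$ slides outward along $L_i$ (the edge \emph{transversal} to $F$), the region added to $P$ is \emph{not} a small wedge confined to the trihedral corner at $q_i$: it is the flat tetrahedron $T_i(\tau_i)=\conv\{q_1,q_2,q_3,q_i(\tau_i)\}$ whose base is the \emph{entire} face $F$ (the face $F$ is replaced by the triangle $\conv\{q_i(\tau_i),q_j,q_k\}$, hinged along the opposite edge $[q_j,q_k]$, while the two faces through the $L_i$-edge merely extend). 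Consequently the three deformation regions all contain $F$ and are nowhere near disjoint, so your ``disjoint supports attached to affinely independent corners'' argument is unavailable; and the leading term of $\partial_{\tau_i}G|_o$ is not $c_i\,q_i$ but
\[
\partial_{\tau_i}G\big|_{o}=\frac{A\sin\alpha_i}{12}\,(2q_i+q_j+q_k),\qquad \{i,j,k\}=\{1,2,3\},
\]
a positive multiple of the interior point $\tfrac12 q_i+\tfrac14 q_j+\tfrac14 q_k$ of $F$, not of the vertex $q_i$. (Here $A$ is the area of $F$ and $\alpha_i$ the angle between $L_i$ and $\aff F$; the formula follows since $w(T_i(\tau_i))=\tfrac13 A\,\tau_i\sin\alpha_i$ while $c(T_i(\tau_i))\to\tfrac14(2q_i+q_j+q_k)$.)

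Thus the determinant you must check is $\det[\,2q_1+q_2+q_3,\;q_1+2q_2+q_3,\;q_1+q_2+2q_3\,]=4\det[\,q_1,q_2,q_3\,]$, which is nonzero exactly when $q_1,q_2,q_3$ are linearly independent --- the same condition your (incorrect) leading term would have produced, so the conclusion is recoverable, but only via the extra linear-algebra step (invertibility of the $3\times3$ matrix with $2$'s on the diagonal and $1$'s elsewhere) that your proposal does not contain. Your closing observation that linear independence of the $q_i$ amounts to the origin lying off $\aff F$ is correct, and is in fact a point the paper itself passes over with a ``without loss of generality''; note, however, that the lemma as stated does not assume $c(P)=o$, so this positioning hypothesis must be supplied by the application (where $P$ is centered with $o\in\inter P$). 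As written, the central step of your proof --- the evaluation of $\partial_{\tau_i}G|_o$ --- rests on a false picture of the deformation, so the argument has a genuine gap even though the statement it aims at still holds.
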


\begin{proof}
It is sufficient to show that the partial derivatives of the examined function span $\Re^3$. Without loss of generality, we may assume that
$q_1$, $q_2$ and $q_3$ are linearly independent.

Consider the polyhedron $P(\tau_1,0,0)$ for some $\tau_1 > 0$, and let $T(\tau_1)= \conv \{ q_1,q_2,q_3,q_1(\tau_1) \}$,
$\bar{W}(\tau_1) =w(T(\tau_1))$ and $\bar{C}(\tau_1) = c(T(\tau_1))$.
Let $A$ be the area of the triangle $\conv \{ q_1,q_2,q_3\}$.
If $\tau_1 > 0$ is sufficiently small, then
\[
\left. \frac{\partial}{\partial \tau_1} W(t)C(t) \right|_{t=(0,0,0)} = \frac{\sin \alpha_1 A}{12}(2q_1+q_2+q_3),
\]\[
W(\tau_1,0,0) C(\tau_1,0,0) = w(P) c(P)+\bar{W}(\tau_1) \bar{C}(\tau_1).
\]
Since $\bar{C}(\tau_1)=\frac{1}{4}(q_1+q_2+q_3+q_1(\tau_1))$, it follows that
\[
\left. \frac{\partial}{\partial \tau_1} W(t)C(t) \right|_{t=(0,0,0)} = \frac{\sin \alpha_1 A}{12}(2q_1+q_2+q_3),
\]
where $\alpha_i$ denotes the angle between $L_i$ and the plane through $q_1,q_2,q_3$.

Using a similar consideration, we obtain the same formula if $\tau_1 < 0$, and similar formulas, where $q_2$ or $q_3$ plays the role of $q_1$, in the partial derivatives with respect to $\tau_2$ or $\tau_3$, respectively.
Note that $0 < \alpha_1, \alpha_2, \alpha_3 \leq \frac{\pi}{2}$. Thus, to show that the three partial derivatives are linearly independent, it suffices to show that
the vectors $2q_1+q_2+q_3$, $q_1+2q_2+q_3$ and $q_1+q_2+2q_3$ are linearly independent. To show it under the assumption that $q_1,q_2,q_3$ are linearly independent can be done using elementary computations, which we leave to the reader.
\end{proof}

\begin{rem}\label{rem:Jacobian_vertex}
We remark that Lemma~\ref{lem:Jacobian_face} can be `dualized' in the following form: Assume that $q$ is a $3$-valent vertex of $P$, and each face of $P$ that $q$ lies on is a triangle. Furthermore, let $U$ be a neighborhood of $q$, and for any $x \in U$, let $W(x) = w \left( \conv \left( (V(P) \setminus \{ q \}) \cup \{ x \} \right) \right)$, and $C(x) = c \left( \conv \left( (V(P) \setminus \{ q \}) \cup \{ x \} \right) \right)$.
Then the Jacobian matrix of the function $W(\cdot) C(\cdot) : U \to \Re^3$ is nondegenerate at $q$.
\end{rem}

\begin{rem}\label{rem:Jacobian}
If the Jacobian of a smooth vector-valued function in $\Re^3$ is nondegenerate, by the Inverse Function Theorem it follows that the function is surjective. Thus, a geometric interpretation of Lemma~\ref{lem:Jacobian_face} and Remark~\ref{rem:Jacobian_vertex} is that under the given conditions, by slight modifications of a vertex or a face of $P$ the function $w(P) c(P)$ moves everywhere within a small neighborhood of its original position. 
\end{rem}

In the forthcoming two lemmas we investigate the connection between polarity and equilibrium points.

\begin{lem}\label{lem:simplices}
Let $S$ be a nondegenerate simplex in the Euclidean space $\Re^d$ such that $o \in \inter S$.
Then $o= c(S^{\circ})$ if, and only if $o = c(S)$.
\end{lem}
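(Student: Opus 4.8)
The plan is to reduce the statement to a symmetry/fixed-point argument on the space of possible centers of mass. Fix a nondegenerate simplex $S \subset \Re^d$ with $o \in \inter S$. The key observation is that polarity $S \mapsto S^{\circ}$ is an involution on the class of simplices with the origin in the interior, and moreover this involution depends continuously (indeed smoothly) on the simplex. More precisely, if $S = \conv\{v_0,\dots,v_d\}$, then $S^{\circ}$ is the simplex whose facet hyperplanes are $\{x : \langle x, v_i\rangle = 1\}$, so the vertices of $S^{\circ}$ are rational functions of the $v_i$. I would first record that $c(S^{\circ})$ is therefore a smooth function of $(v_0,\dots,v_d)$ on the open set where $o \in \inter S$, and that $c((S^{\circ})^{\circ}) = c(S)$ since $(S^{\circ})^{\circ} = S$.

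Next I would exploit the transitivity of $GL(d)$ on nondegenerate simplices together with the transformation law for polarity: for $A \in GL(d)$, $(A S)^{\circ} = (A^{-T}) S^{\circ}$. Both the center-of-mass map and the condition "$o$ is the center of mass" behave well under linear maps: $c(AS) = A\,c(S)$. So the property "$o = c(S) \iff o = c(S^{\circ})$" is invariant under replacing $S$ by $AS$ for any $A \in GL(d)$ fixing $o$; in particular it suffices to prove the equivalence for a single representative in each orbit, or to set up a self-map whose fixed points are exactly the balanced simplices. Concretely, I would consider, for a simplex $S$ with $o\in\inter S$, the simplex $S^{\circ}$ recentered, and iterate; but the cleanest route is: assume $o = c(S)$ and show directly $o = c(S^{\circ})$, then the converse follows by applying this to $S^{\circ}$ in place of $S$ and using $(S^{\circ})^{\circ}=S$. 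Thus \emph{only one implication needs proof}.

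For that one implication I would use the regular simplex as an anchor and a deformation argument, or better, an explicit computation. If $o = c(S)$ then $\sum_{i=0}^d v_i = 0$ (the centroid of the vertices equals the center of mass for a simplex). The vertices $u_j$ of $S^{\circ}$ satisfy $\langle u_j, v_i \rangle = 1$ for all $i \neq j$; writing $u_j = \sum_k \lambda_{jk} v_k^{*}$ in a suitable basis, one checks that $\sum_j u_j$ is a fixed scalar multiple of $\sum_i v_i$ by symmetry of the defining linear system under the permutation action that cyclically relabels the $v_i$ — more carefully, the map $(v_0,\dots,v_d)\mapsto \sum_j u_j$ is equivariant under the $GL(d)$-action and under permutations, and vanishes precisely when $\sum_i v_i = 0$. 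Hence $\sum_j u_j = 0$, i.e. the centroid of the vertices of $S^{\circ}$ is $o$, which for a simplex means $c(S^{\circ}) = o$.

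The main obstacle I anticipate is the last computational step: verifying that $\sum_i v_i = 0$ forces $\sum_j u_j = 0$ for the polar vertices. This is a linear-algebra identity about the inverse of the "all-ones-off-diagonal" pattern of Gram-type constraints, and while it is elementary, getting the bookkeeping right (especially the role of the origin being interior, which guarantees the system is nonsingular and the $u_j$ are genuinely the vertices rather than spurious intersections) requires care. An alternative that sidesteps the explicit inverse: use that $GL(d)$ acts transitively on simplices containing $o$ in the interior, pick $A$ with $AS$ a \emph{regular} simplex centered at $o$; then $A^{-T}S^{\circ}$ is the polar dual of a regular simplex, which is again regular and centered at $o$, so $c(S^{\circ}) = A^{-T}(o) = o$ — but this only handles the case $o = c(S)$ exactly when one can choose such an $A$, which is again the hypothesis. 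I would present the symmetry argument as the main line and relegate the index-chasing to "elementary computations left to the reader," consistent with the style of Lemma~\ref{lem:Jacobian_face}.
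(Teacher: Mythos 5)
Your proposal is correct, but it follows a genuinely different route from the paper. The paper never splits the equivalence into two implications: it characterizes $o=c(S)$ by the metric relation $\cos\alpha_i\,|p_i|=-d\,|n_i|$ between each vertex $p_i$ and the foot $n_i$ of the perpendicular from $o$ to the opposite facet (encoding that the centroid divides each height in ratio $d:1$), and then observes that polarity sends $p_i\mapsto n_i/|n_i|^2$ and $n_i\mapsto p_i/|p_i|^2$ while preserving the angle $\alpha_i$, so the condition is literally self-dual and both directions drop out simultaneously. You instead reduce to one implication via the involution $(S^{\circ})^{\circ}=S$ and then prove that $\sum_i v_i=0$ forces $\sum_j u_j=0$; note that your ``alternative'' at the end (conjugate $S$ to a regular centered simplex by some $A\in GL(d)$, use $(AS)^{\circ}=A^{-T}S^{\circ}$ and the fact that the polar of a centered regular simplex is again a centered regular simplex) is in fact a complete proof of the forward implication, not merely a fallback, and is arguably cleaner than the paper's angle bookkeeping. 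One correction: your parenthetical claim that $\sum_j u_j$ is a fixed scalar multiple of $\sum_i v_i$ is false in general --- for $v_0=(1,0)$, $v_1=(0,1)$, $v_2=(-1,-2)$ one gets $\sum_i v_i=(0,-1)$ but $\sum_j u_j=(-1,1)$ --- and only the weaker statement you then retreat to (vanishing of $\sum_j u_j$ when $\sum_i v_i=0$) is true. It does follow in three lines: writing $o=\sum_i\lambda_i v_i$ with all $\lambda_i=1/(d+1)$, the relations $\langle u_j,v_i\rangle=1$ for $i\neq j$ give $0=\langle u_j,\sum_i\lambda_i v_i\rangle$ and hence $\langle u_j,v_j\rangle=-d$, whence $\langle\sum_j u_j,v_i\rangle=d-d=0$ for every $i$, and $\sum_j u_j=0$ because the $v_i$ span $\Re^d$. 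With that repair (or with the $GL(d)$ argument as the main line), your proof is complete.
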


\begin{proof}
Let the vertices of $S$ be denoted by $p_1, p_2, \ldots, p_{d+1}$. For $i=1,2,\ldots,d+1$, let $n_i$ denote the orthogonal projection of $o$ onto the facet hyperplane $H_i$ of $S$ not containing $p_i$, and let $H'_i$ be the hyperplane through $o$ and parallel to $H_i$. We remark that since $o \in \inter S$, none of the $p_i$s and the $n_i$s is zero. Finally, let $\alpha_i$ denote the angle between $p_i$ and $n_i$.

Assume that $o = c(S)$. Then for all values of $i$, we have $\dist(p_i,H'_i)= d \dist(H'_i,H_i)$, where $\dist(A,B) = \inf \{ |a-b|: a \in A, b \in B \}$ is the distance of the sets $A$ and $B$. This implies that the projection of $p_i$ onto the line through $o$ and $n_i$ is $-dn_i$ for all values of $i$, or in other words,
\begin{equation}\label{eq:polarity}
\cos \alpha_i |p_i| = -d|n_i|
\end{equation}
for all values of $i$. On the other hand, it is easy to see that if (\ref{eq:polarity}) holds for all values of $i$, then $o=c(S)$.

The vertices of $S^{\circ}$ are the points $p^{\star}_i=\frac{n_i}{|n_i|^2}$, where $i=1,2,\ldots,d+1$, and the projection of $o$ onto the facet hyperplane of $P^{\circ}$ not containing $p^{\star}_i$ is $n^{\star}_i=\frac{p_i}{|p_i|^2}$. Hence, the angle between $p^{\star}_i$ and $n^{\star}_i$ is $\alpha_i$.
Similarly like in the previous paragraph, $o=c(S^{\circ})$ if, and only if
\begin{equation}\label{eq:polarity2}
\cos \alpha_i |p_i^{\star}| = -d|n_i^{\star}|
\end{equation}
holds for all values of $i$.
On the other hand, if $\cos \alpha_i |p_i| = -d|n_i|$ for some value of $i$, then $\cos \alpha_i |p_i^{\star}| = \frac{\cos \alpha_i}{|n_i|} = - \frac{d}{|p_i|} = -d|n_i^{\star}|$, and vice versa. Thus, (\ref{eq:polarity}) and (\ref{eq:polarity2}) are equivalent, implying Lemma~\ref{lem:simplices}.
\end{proof}

\begin{lem}\label{lem:polarity}
Let $P$ be a convex $d$-polytope in the Euclidean space $\Re^d$ such that $o \in \inter P$, and let $P^{\circ}$ be its polar.
Let $F$ be a $k$-face of $P$, where $0 \leq k \leq d-1$,  and let $F^{\star}$ denote the corresponding $(d-k-1)$-face of $P^{\circ}$.
Then $F$ contains a nondegenerate equilibrium point of $P$ with respect to $o$ if, and only if $F^{\star}$ contains a nondegenerate equilibrium point
of $P^{\circ}$ with respect to $o$.
\end{lem}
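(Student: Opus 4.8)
The plan is to translate the geometric condition defining an equilibrium point into an algebraic statement about the support function (equivalently, distances to supporting hyperplanes) of $P$, and then observe that polarity swaps the roles of ``distance from $o$ to a vertex'' and ``distance from $o$ to a facet hyperplane'' in exactly the way that preserves the equilibrium condition. Concretely, for a $k$-face $F$ of $P$, let $A = \aff F$ and let $m$ be the orthogonal projection of $o$ onto $A$; the defining property (Definition~\ref{def1}, unwound) is that $F$ carries a nondegenerate equilibrium point with respect to $o$ if and only if $m \in \relint F$. The dual face $F^{\star}$ lies in the hyperplane $\{ x : \langle x, m \rangle = 1\}$ (after scaling), its affine hull is the $(d-k-1)$-dimensional affine subspace orthogonal to $m$ inside that hyperplane, and the foot of the perpendicular from $o$ to $\aff F^{\star}$ is the point $\frac{m}{|m|^2}$. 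So the whole claim reduces to: $m \in \relint F \iff \frac{m}{|m|^2} \in \relint F^{\star}$.

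To prove that equivalence I would first recall the standard face-correspondence: if $F = \conv\{p_{i_1},\dots,p_{i_r}\}$ is a face of $P$ lying in the supporting hyperplane with outer normal $u$ (so $\langle p_{i_j}, u\rangle = 1$ for all $j$ and $<1$ on the other vertices), then $F^{\star}$ is the face of $P^{\circ}$ consisting of all $y \in P^{\circ}$ with $\langle p_{i_j}, y\rangle = 1$ for every $j$; in particular $u \in \relint F^{\star}$ and $F^{\star} = P^{\circ} \cap A^{\perp}_{\text{aff}}$, where the relevant affine flat is $\{y : \langle p_{i_j}, y \rangle = 1\ \forall j\}$. Now $m$, the projection of $o$ onto $\aff F$, is the unique point of $\aff F$ of the form $\lambda u$; since $\langle p_{i_j}, u\rangle = 1$ we get $\langle m, p_{i_j}\rangle = \lambda$, and since $m \in \aff F \subset \bd P$ by the support property we actually get $\lambda = |m|^2/(\text{something})$ — cleaner is to just note $m = u/|u|^2$ directly because $u$ is the outer normal at distance $1/|u|$ and $m$ is its footpoint. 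Then $m/|m|^2 = u$, which we already identified as the relative interior point of $F^{\star}$ ``dual'' to the barycentric data; but that is not yet the claim, because we need $m \in \relint F$, not merely $m \in \aff F$.

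The crux, therefore, is the barycentric-coordinate bookkeeping that matches $m$'s position in $F$ with $u$'s position in $F^{\star}$. Write $m = \sum_j \mu_j p_{i_j}$ with $\sum_j \mu_j = 1$ (this is the unique affine combination since $m \in \aff F$), and $u = \sum_j \nu_j n_j$ where the $n_j$ are the vertices of $F^{\star}$ (which are the outer facet-normals of $P$ scaled to the polar), with $\sum \nu_j = 1$. The content of Lemma~\ref{lem:simplices}, applied not to $P$ itself but to the simplex spanned by $\{p_{i_j}\}$ together with $o$ (or rather the appropriate lower-dimensional analogue within $A$ and $A^{\perp}$), is precisely that $\mu_j > 0$ for all $j$ $\iff$ $\nu_j > 0$ for all $j$, with the sign pattern of the coefficients being exactly preserved — this is the same computation as (\ref{eq:polarity})$\Leftrightarrow$(\ref{eq:polarity2}) but localized to the face. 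So the proof I would write is: (1) reduce to the footpoint characterization via Definition~\ref{def1}; (2) identify $\aff F^{\star}$ and its footpoint $m/|m|^2 = u$ explicitly from the polar-duality formulas; (3) express both $m \in \relint F$ and $u \in \relint F^{\star}$ as positivity of an appropriate set of coefficients, and invoke (the proof technique of) Lemma~\ref{lem:simplices} to see these positivity conditions coincide; (4) conclude, noting that nondegeneracy of one equilibrium point forces nondegeneracy of the other because the face-lattice correspondence is a bijection and ``$H \cap P$ is exactly $F$'' dualizes to ``$H^{\star} \cap P^{\circ}$ is exactly $F^{\star}$''. The main obstacle is step (3): getting the bookkeeping of which simplex to apply Lemma~\ref{lem:simplices} to, and checking that ``$o$ lies in the relative interior'' of that simplex (needed to invoke the lemma) follows from $o \in \inter P$ together with $m \in \relint F$ — so one has to be a little careful that the argument is not circular, i.e. that one direction of the equivalence does not secretly assume the conclusion. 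I would handle this by proving the coefficient-positivity equivalence as a standalone linear-algebra fact about a point and its polar footpoint, independent of whether either is in the relevant relative interior, so that the final biconditional drops out cleanly.
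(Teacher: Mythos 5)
Your step (1) --- the reduction of ``$F$ carries a nondegenerate equilibrium point with respect to $o$'' to ``$m \in \relint F$'', where $m$ is the foot of the perpendicular from $o$ to $\aff F$ --- is false for faces of dimension $k < d-1$, and this breaks the whole argument. The condition $m \in \relint F$ only says the footpoint lands inside the face; Definition~\ref{def1} additionally requires that the hyperplane through $m$ perpendicular to $m-o$ \emph{supports} $P$ and meets it exactly in $F$, i.e.\ that $m$ lies in the relative interior of the normal cone of $F$ (the conic hull of the outer normals $n_j$ of the facets containing $F$). For an edge of a $3$-polytope this is the acute-angle condition of Remark~\ref{rem:smallperturb}(iii); for a vertex $q$ the footpoint condition is vacuous ($m=q\in\relint\{q\}$ always) while the equilibrium condition certainly is not. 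Consequently your target biconditional ``$m \in \relint F \iff m/|m|^2 \in \relint F^{\star}$'' is simply not true: take any vertex $q$ of $P$ that is not an equilibrium point; then $m=q$ trivially lies in $\relint F$, but $q/|q|^2$ does not lie in the relative interior of the dual facet $F^{\star}$ (for a facet the footpoint-in-relative-interior condition \emph{is} equivalent to carrying a stable equilibrium, and that would force $q$ to be an unstable equilibrium of $P$). Relatedly, the identity $m=u/|u|^2$ you lean on holds only when $u/|u|^2$ happens to lie in $\aff F$, i.e.\ essentially only in the facet case or when the alignment you are trying to establish already holds.

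The repair is what the paper actually does: characterize a nondegenerate equilibrium on $F$ by the \emph{conjunction} of two positivity conditions --- the footpoint $p$ lies in the relative interior of the conic hull of the vertices $p_i$ of $F$ (equivalently $p\in\relint F$), \emph{and} $p$ lies in the relative interior of the conic hull of the facet normals $n_j$ of the facets containing $F$ (the support/nondegeneracy condition). Under polarity the vertices of $F^{\star}$ are the points $n_j/|n_j|^2$, the facets of $P^{\circ}$ through $F^{\star}$ have normals along the $p_i$, and the footpoint on $\aff F^{\star}$ is $p/|p|^2$; hence the two conditions swap roles and their conjunction is manifestly self-dual. Your invocation of Lemma~\ref{lem:simplices} in step (3) is not the right tool: that lemma compares centers of mass of a simplex and its polar and plays no role here. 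The coefficient bookkeeping you want is exactly the conic-hull characterization above, which must be proved directly (as in the paper, via the equivalence of conditions (a)--(c) on outer normals) rather than extracted from the center-of-mass computation.
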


\begin{proof}
Let $F = \conv \{ p_i : i \in I \}$, where $I$ is the set of the indices of $P$ such that $p_i$ is contained in $F$, and let $p$ be the orthogonal projection of $o$ onto $\aff F$. Let $L=\aff (F \cup \{ o \})$,
and let $L^c$ denote the orthogonal complement of $L$ passing through $o$.
For any facet hyperplane of $P$ containing $F$, let $n_j$, $j \in J$ denote the projection of $o$ onto this hyperplane. 
Let $H_j^+$ be the closed half space $\{ q \in \Re^d : \langle q,n_j \rangle \leq \langle n_j , n_j \rangle \}$.
Let $\bar{H}_i^+= H_i^+ \cap H$ for any $i \notin I$. Finally, let $\bar{n}_i$ be the component of $n_i$ parallel to $H$.

Before proving the lemma, we observe that for any given vectors $n_1,n_2,\ldots, n_k$ spanning $\Re^d$, the following are equivalent:
\begin{itemize}
\item[(a)] $o$ is an interior point of a polytope $Q$ in $\Re^d$ with outer facet normals $n_1, n_2, \ldots, n_k$.
\item[(b)] There are some $\lambda_1, \lambda_2,\ldots, \lambda_k > 0$ such that $o \in \inter Q'$, where $Q'=\conv \{ \lambda_1 n_1, \lambda_2 n_2, \ldots, \lambda_k n_k \}$.
\item[(c)] We have $o \in \inter \conv \{ \lambda_1 n_1, \lambda_2 n_2, \ldots, \lambda_k n_k \}$ for any $\lambda_1, \lambda_2, \ldots, \lambda_k > 0$.
\end{itemize}
We note that if a polytope $Q$ satisfies the conditions in (a), then its polar $Q'=Q^{\circ}$ satisfies the conditions in (b), and vice versa.
Finally, observe that if $F$ contains an equilibrium point, then by exclusion it is $p$.

We show that $p$ is a nondegenerate equilibrium point of $F$ if, and only if it is contained in the relative interiors of the conic hulls of the $p_i$s as well as those of the $n_j$s.
First, let $p$ be a nondegenerate equilibrium point. Then $p \in \relint F$, that is, it is in the relative interior of the conic hull (in particular, the convex hull) of the $p_i$s.
Observe that since the projection of $o$ onto $\aff F$ is $p$, for any $j \in J$, the projection of $n_j$ onto $\aff F$ is $p$.
In other words, $n_j \in L' =\aff(L^c \cup \{ p \})$ for all $j \in J$. Since $p$ is a vertex of the polytope $P \cap L'$,
the vectors $n_j$, $j \in J$ span this linear subspace, or equivalently, the vectors $\bar{n}_j$ span $L^c$.
Observe that the intersection of $P$ with the affine subspace $(1-\varepsilon) p + L^c$, for sufficiently small values of $\varepsilon > 0$, is a $(d-k-1)$-polytope, with outer facet normals $\bar{n}_j$, $j \in J$, which contains $(1-\varepsilon) p$ in its relative interior. By the observation in the previous paragraph, it follows that $o$ is contained in the relative interior of the convex hull of the $\bar{n}_j$s, which implies that $p$ is contained in the relative interior of the conic hull of the $n_j$s.
On the other hand, if $p$ is contained in the relative interior of the conic hull of the $p_i$s, then the fact that $p \in \aff F$ implies that $p \in \relint F$.
Furthermore, if $p$ is contained in the relative interior of the conic hull of the $n_j$s, then $o$ is contained in the relative interior of the convex hull of the $\bar{n}_j$s. Thus, the only solution for $q \in L^c$ of the system of linear inequalities $\langle q, \bar{n}_j \rangle \leq 0$, where $j \in J$, is $q=p$, which implies that the only point of $P$ in $p+L^c$ is $p$. This means that $p$ is a nondegenerate equilibrium point of $P$.

Finally, observe that the vertices of $F^{\star}$ are the points $\frac{n_j}{|n_j|^2}$, and the projections of $o$ onto the facet hyperplanes of $P^{\circ}$ containing $F^{\star}$ are the points $\frac{p_i}{|p_i|^2}$. Furthermore, $\aff F^{\star} = \frac{p}{|p|^2} + L^c$, which yields that the projection of $o$ onto $\aff F^{\star}$ is $\frac{p}{|p|^2}$. Combining it with the consideration in the previous paragraph, this yields the assertion.
\end{proof}

The next corollary is an immediate consequence of Lemmas~\ref{lem:simplices} and \ref{lem:polarity} and, together with the result of Conway \cite{Dawson}, implies Theorem~\ref{thm:tetra}.

\begin{cor}\label{cor:antistable}
Every homogeneous tetrahedron has at least two vertices which are equilibrium points. Furthermore, there are inhomogeneous tetrahedra with exactly one vertex which is an equilibrium point.
\end{cor}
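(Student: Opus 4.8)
The plan is to deduce Corollary~\ref{cor:antistable} from the duality established in Lemmas~\ref{lem:simplices} and \ref{lem:polarity}, together with Conway's theorem (as quoted from \cite{Dawson}) that a homogeneous tetrahedron cannot have exactly one stable equilibrium. First I would set up notation: let $T$ be a homogeneous tetrahedron with center of mass $c$; translate so that $c=o$. The polar $T^\circ$ is again a tetrahedron (the polar of a $d$-simplex with the origin interior is a $d$-simplex), and by Lemma~\ref{lem:simplices} applied with $d=3$, the origin is the center of mass of $T^\circ$ as well. Thus $T^\circ$ is also, up to scaling, a homogeneous tetrahedron balanced at its center of mass.

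Next I would invoke the face correspondence of Lemma~\ref{lem:polarity}: the $2$-faces of $T$ correspond to the $0$-faces (vertices) of $T^\circ$, and $F$ carries a nondegenerate equilibrium of $T$ iff the dual vertex $F^\star$ carries a nondegenerate equilibrium of $T^\circ$. Hence the number $S(T)$ of stable equilibria of $T$ equals the number $U(T^\circ)$ of unstable equilibria of $T^\circ$. Conway's theorem tells us $S(T^\circ)\geq 2$ for the homogeneous tetrahedron $T^\circ$; but by the same correspondence applied in the other direction, $S(T^\circ) = U((T^\circ)^\circ) = U(T)$ (using $(T^\circ)^\circ = T$ after the appropriate normalization, which holds since $o\in\inter T$). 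Therefore $U(T)\geq 2$: every homogeneous tetrahedron has at least two vertices that are equilibrium points, which is the first assertion of the corollary. (Strictly speaking one should first note that a generic homogeneous tetrahedron has only nondegenerate equilibria, and handle the degenerate case by a limiting/perturbation argument as in Remark~\ref{rem:smallperturb}, or observe that Conway's statement already covers the count of stable equilibria including the degenerate situations.)

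For the second assertion — that there exist \emph{inhomogeneous} tetrahedra with exactly one vertex which is an equilibrium point — I would argue by polar duality from a known \emph{homogeneous} monostatic-type example on the stable side. Conway's construction (again via \cite{Dawson}, or the classical fact that one can build a homogeneous tetrahedron with exactly one \emph{unstable} vertex) gives a homogeneous tetrahedron $T$ with $U(T)=1$; its polar $T^\circ$, with the origin placed at $c(T)$, is then a tetrahedron, balanced with respect to the point $o$, for which Lemma~\ref{lem:polarity} gives exactly one $2$-face carrying an equilibrium and, dually, $U(T)=1$ forces the number of equilibrium vertices of... wait — more carefully: the vertices of $T^\circ$ correspond to the $2$-faces of $T$, so the number of equilibrium vertices of $T^\circ$ with respect to $o$ equals $S(T)$, which need not be $1$. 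So instead I should polar-dualize an example with $S(T)=1$: but by the first part no \emph{homogeneous} such $T$ exists. The resolution is that $o = c(T)$ need not be the center of mass of $T^\circ$ unless $T$ is a simplex balanced at its centroid — and it is, so Lemma~\ref{lem:simplices} does force $o=c(T^\circ)$, meaning $T^\circ$ is homogeneously balanced, and hence this route cannot produce an inhomogeneous example directly. Therefore the correct approach to the second assertion is: take $T^\circ$ to be the polar of a homogeneous tetrahedron $T$ with $U(T)=1$ (which exists by Conway), but now place the origin at the center of mass $c(T^\circ)$ of $T^\circ$ rather than at $c(T)$; these differ, so with respect to $c(T)$ the polytope $T$ is an \emph{inhomogeneous} tetrahedron (i.e., a tetrahedron whose assigned mass center is $c(T)$, which is not its centroid after we re-identify which polytope we call primal), with exactly one equilibrium vertex inherited from the single equilibrium $2$-face of $T^\circ$ under Lemma~\ref{lem:polarity}.

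The main obstacle, and the step requiring genuine care, is exactly this bookkeeping of \emph{which} interior point plays the role of the center of mass on each side of the polarity: Lemma~\ref{lem:polarity} is about equilibria with respect to a fixed origin $o$ and does not care about homogeneity, whereas Lemma~\ref{lem:simplices} pins down the homogeneous balancing of a simplex and its polar simultaneously. I expect the clean writeup to state: (i) for the \emph{homogeneous} claim, combine Lemma~\ref{lem:simplices} (so $o=c(T)=c(T^\circ)$) with Lemma~\ref{lem:polarity} and Conway's bound $S\geq 2$ to get $U(T) = S(T^\circ)\geq 2$; (ii) for the \emph{inhomogeneous} claim, start from Conway's homogeneous tetrahedron with a single unstable vertex, pass to its polar about that vertex-count's witnessing center, and read off via Lemma~\ref{lem:polarity} a tetrahedron with a single equilibrium face, then re-dualize to land on a tetrahedron equipped with a non-centroidal mass point having a single equilibrium vertex. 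A short nondegeneracy remark (all the equilibria in play are nondegenerate, so Lemma~\ref{lem:polarity} applies verbatim) closes the argument.
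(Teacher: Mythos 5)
Your argument for the first assertion is correct and is exactly the route the paper intends: translate $c(T)$ to $o$, use Lemma~\ref{lem:simplices} to conclude $c(T^\circ)=o$ so that $T^\circ$ is again a homogeneous tetrahedron, apply Conway's bound $S(T^\circ)\geq 2$, and transfer it back through Lemma~\ref{lem:polarity} (faces of $T^\circ$ correspond to vertices of $T$) to get $U(T)\geq 2$.

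The second assertion, however, is where your write-up breaks down, and the breakdown is not just bookkeeping. You repeatedly take as input ``a homogeneous tetrahedron $T$ with $U(T)=1$ (which exists by Conway)''. No such tetrahedron exists: that is precisely what the first half of the corollary (equivalently, Theorem~\ref{thm:tetra}) rules out, so any construction starting from it is vacuous. The input you actually need from \cite{Dawson} is Conway's \emph{other} result, quoted later in the paper: there exist \emph{inhomogeneous} monostatic tetrahedra, i.e.\ a tetrahedron $T$ together with an interior reference point $o\neq c(T)$ having exactly one face that carries an equilibrium with respect to $o$. Polarize about that $o$: by Lemma~\ref{lem:polarity}, the vertices of $T^\circ$ correspond to the faces of $T$, so $T^\circ$ has exactly one vertex carrying an equilibrium with respect to $o$; and by Lemma~\ref{lem:simplices}, $o\neq c(T)$ forces $o\neq c(T^\circ)$, so $T^\circ$ equipped with the mass point $o$ is genuinely inhomogeneous. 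That single polarization finishes the claim — there is no need for the ``re-dualize'' step you sketch, and the homogeneous $S(T)=1$ or $U(T)=1$ examples you reach for along the way cannot be made to exist.
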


\section{Polyhedra with many stable or unstable equilibria: proof of Theorem~\ref{thm:main1}}\label{sec:proof}

\subsection{Proof of Theorem~\ref{thm:main1} for polyhedral pairs}\label{subsec:polyhedralpair}

We need to show that if the class  $(S,U)^E$ is defined by a polyhedral pair, then there is a polyhedron with $S$ faces and $U$ vertices. For brevity, we call such a polyhedron a \emph{minimal polyhedron} in class $(S,U)^E$. We dothe construction separately in several cases.

\subsubsection{Case 1} $S=U \geq 4$.\\
Let $S \geq 4$, and consider a regular $(S-1)$-gon $R_S$ in the $(x,y)$-plane, centered at $o$ and with unit inradius. Let $P_v(h)$ be the pyramid with base $R_v$ and apex $(0,0,h)$. By its symmetry properties, $P_S(h)$ is a minimal polyhedron in the class  $( S,S)^E$ for all $h > 0$.

\subsubsection{Case 2} $S > 4$ and $S < U \leq 2S-4$.\\
In this case the proof is based on Lemma~\ref{lem:inductive_step_12}.

\begin{lem}\label{lem:inductive_step_12}
Assume that $P$ is a minimal polyhedron in class  $(S,U)^E$ having a vertex of degree $3$. Then there is a minimal polyhedron in class  $(S+1, U+2)^E$ having a vertex of degree $3$.
\end{lem}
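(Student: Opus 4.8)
The plan is to build the required polyhedron $P'$ by truncating the given degree-$3$ vertex $q$ of $P$ with a plane very close to the support plane of $P$ at $q$. Cutting off a degree-$3$ vertex replaces $q$ by a triangular face, replaces each of the three edges at $q$ by a shorter edge, and changes nothing else, so it adds exactly one face, two vertices and three edges, and the three new vertices all have degree $3$. Thus the $f$-vector of $P'$ and the existence of a degree-$3$ vertex are automatic; the whole point is to choose the cutting plane so that $P'$ still carries an equilibrium on every face, edge and vertex. Once this is done, $P'$ has $S+1$ faces, $U+2$ vertices, $e+3=S+U+1$ edges and only nondegenerate equilibria, with every face stable and every vertex unstable; then $S'=S+1$, $U'=U+2$, and the Poincar\'e--Hopf relation forces $H'=S+U+1=e'$, so every edge is a saddle, $C(P')=0$, and $P'$ is a minimal polyhedron in $(S+1,U+2)^E$.

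To describe the cut, normalise so that $c(P)=o$ and $|q|=1$, and set $\hat u=q$. Since $q$ is a nondegenerate unstable equilibrium, the three edges $e_1,e_2,e_3$ at $q$ have directions with negative $\hat u$-component; scale $d_i$ so that $\langle d_i,\hat u\rangle=-1$ and write $d_i=P_i-\hat u$ with $P_i\in\hat u^{\perp}$. Because $o\in\inter P$ we have $q-\varepsilon\hat u\in\inter P$ for small $\varepsilon>0$, which, the tangent cone at $q$ being $\mathrm{pos}\{d_1,d_2,d_3\}$, is equivalent to $o\in\inter\conv\{P_1,P_2,P_3\}$; in particular $P_1,P_2,P_3$ form a genuine triangle. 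Given $\rho\in\hat u^{\perp}$ (to be fixed below) and small $\delta>0$, let $P'=P\cap\{x:\langle x,\hat u+\delta\rho\rangle\le 1-\delta\}$. For $\delta$ small enough (depending on $\rho$) this truncates $q$: the new vertices are $q_i=q+t_id_i=(1-\delta)\hat u+\delta P_i+O(\delta^2)$, lying on $e_i$ with $t_i=\delta+O(\delta^2)$, and the new triangular face is $F_0=\conv\{q_1,q_2,q_3\}$.

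Next I would verify the equilibria of $P'$. By Lemma~\ref{lem:magnitude} $c(P')=o+O(\delta^3)$, while outside a neighbourhood of $q$ the boundary of $P'$ changes by $O(\delta)$ at most; so by Remark~\ref{rem:smallperturb} every face of $P$ other than $F_0$, every edge of $P$, and every vertex of $P$ other than $q$ keeps its equilibrium. The face $F_0$ is stable because the foot of the perpendicular from $c(P')$ onto $\aff F_0$ is $(1-\delta)\hat u+\delta\rho+O(\delta^2)$, while $F_0=(1-\delta)\hat u+\delta\conv\{P_1,P_2,P_3\}+O(\delta^2)$, so the foot lies in $\relint F_0$ provided $\rho\in\inter\conv\{P_1,P_2,P_3\}$. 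For a new vertex $q_i$, whose tangent cone is generated by $d_i$ and by $q_{i\pm1}-q_i$, one computes $\langle d_i,q_i-c(P')\rangle=-1+O(\delta)<0$ and $\langle q_{i\pm1}-q_i,\,q_i-c(P')\rangle=\delta^2\langle P_{i\pm1}-P_i,\,P_i-\rho\rangle+O(\delta^3)$; the latter is negative exactly when the orthogonal projection of $\rho$ onto the line $\aff\{P_i,P_{i\pm1}\}$ falls in the relative interior of the segment $[P_i,P_{i\pm1}]$, and then $q_i$ is a nondegenerate unstable equilibrium. Imposing this for the three edges of $\conv\{P_1,P_2,P_3\}$ together with $\rho\in\inter\conv\{P_1,P_2,P_3\}$ singles out a region of admissible $\rho$, which I would show is nonempty by an elementary planar computation (any interior point works when the triangle is acute; in general the condition reduces to $|\cos\alpha|<1$ at the largest angle $\alpha$). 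Choosing such a $\rho$ and then $\delta$ small completes the construction.

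The one subtle point, and the main obstacle, is the choice of the tilt $\rho$: the plane orthogonal to $\hat u$ (that is, $\rho=0$) keeps $F_0$ stable but in general fails to turn all three new vertices into equilibria, so the full two-parameter freedom in the direction of the cutting plane is genuinely needed, and the crux of the proof is the elementary fact that the admissible region for $\rho$ is nonempty for every triangle. Everything else — the behaviour of the old faces, edges and vertices, and the final Poincar\'e--Hopf bookkeeping — is routine given Lemma~\ref{lem:magnitude} and Remark~\ref{rem:smallperturb}.
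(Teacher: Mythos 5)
Your construction is correct and ends in the same place, but the key verification runs along a genuinely different track from the paper's, so a comparison is worthwhile. Both arguments truncate the degree-$3$ vertex $q$ by a plane at distance $O(\delta)$ from $q$, preserve the old equilibria via Lemma~\ref{lem:magnitude} and Remark~\ref{rem:smallperturb}, and close with the same Euler/Poincar\'e--Hopf count; the difference is in which new equilibria get checked directly. The paper cuts exactly perpendicular to $q-c$ and verifies only that the three \emph{new edges} carry saddle points, letting the count supply the new face and vertices; you tilt the cut by $\rho$ and verify the new face and the three new vertices, letting the count supply the edges. The obstacle you identify as the crux --- that the perpendicular cut need not make all three new vertices into equilibria --- is real for a general polyhedron but cannot occur here: the paper resolves it by using minimality of $P$ once more, namely that each face adjacent to $q$ carries a stable point $s$, so the ray from $q$ through $s$ lies in the relative interior of the corresponding $2$-dimensional tangent cone at $q$; the perpendicular foot of $c$ on each new edge sits on that ray, hence in the relative interior of the edge. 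In your notation this says precisely that $o$ already projects into the relative interior of every side of $\conv\{P_1,P_2,P_3\}$, i.e.\ that $\rho=0$ is admissible, so the tilt is not actually needed under the lemma's hypotheses. Your route still works and is somewhat more robust (it uses the adjacent faces' stable points only in the final count), but the one step you defer --- nonemptiness of the admissible region for $\rho$ for an arbitrary triangle --- should be pinned down: the incenter of $\conv\{P_1,P_2,P_3\}$ always works, since its orthogonal projections onto the three side lines are the tangency points of the incircle and hence lie in the open sides, whereas your parenthetical about $\lvert\cos\alpha\rvert<1$ does not obviously establish the claim. With that filled in, your proof is complete.
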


\begin{proof}
Let $P$ be a minimal polyhedron in class  $(S, U)^E$ with a vertex $q$ of degree $3$.
For sufficiently small $\varepsilon > 0$, let $P_{\varepsilon} \subset P$ be the intersection of $P$ with the closed half space with inner normal vector $c-q$, at the distance $\varepsilon$ from $q$. We show that if $\varepsilon$ is sufficiently small, then $P_{\varepsilon}$ satisfies the conditions in the lemma.

If $\varepsilon$ is sufficiently small, the boundary of this half space intersects only those edges of $P$ that start at $q$.
Thus, $P_{\varepsilon}$ has one new triangular face $F$, and three new vertices $q_1, q_2, q_3$ on $F$. Since $q$ is not a vertex of $P_{\varepsilon}$, $P_{\varepsilon}$ has $S+1$ faces and $U+2$ vertices. Furthermore, $q_1, q_2$ and $q_3$ have degree $3$, which means that we need only to show that $P_{\varepsilon}$ is a minimal polyhedron.
To do it, we set $c=c(P)$ and $c_{\varepsilon} = c(P_{\varepsilon})$.

Note that by (\ref{Euler}) and (\ref{Poincare}), every edge of a minimal polyhedron contains an equilibrium point.
Thus, by Remark~\ref{rem:smallperturb}, if $\varepsilon$ is sufficiently small, then every edge of $P_{\varepsilon}$, apart from those in $F$, contains an equilibrium point with respect to $c_{\varepsilon}$.
We intend to show that if $\varepsilon$ is sufficiently small, then the edges of $P_{\varepsilon}$ in $F$ also contain equilibrium points with respect to $c_{\varepsilon}$, which, by (\ref{Euler}) and (\ref{Poincare}) clearly implies that $P_{\varepsilon}$ is a minimal polyhedron.

Consider, e.g. the edge $E=[q_1,q_2]$, and let $F_3$ be the face of $P_{\varepsilon}$ different from $F$ and containing $E$.
Let $h$ and $s$ be the equilibrium point on $E$ and on $F_3$, respectively, with respect to $c$.
Let $\alpha$ and $\beta$ denote the dihedral angles between the planes $\aff (E \cup \{ c\})$ and $\aff F$, and the planes $\aff (E \cup \{ c\})$ and $\aff F_3$, respectively. The fact that $h$ is an equilibrium point with respect to $c$ is equivalent to saying that the orthogonal projection of $c$ onto the line of $E$ is $h$, and that $0 < \alpha, \beta < \frac{\pi}{2}$.

Since $h$ is contained in the plane $\aff \{ q,c,s\}$ for all values of $\varepsilon$, and it is easy to see that there is some constant $\gamma' > 0$ independent of $\varepsilon$ such that $|q_1-h|, |q_2 - h| \geq \gamma' \varepsilon$.
Similarly, an elementary computation shows that for some constant $\gamma'' > 0$ independent of $\varepsilon$, we have $0 < \alpha, \beta \leq \frac{\pi}{2} - \gamma'' \varepsilon$. Thus, Lemma~\ref{lem:magnitude} implies that for small values of $\varepsilon$, $E$ contains an equilibrium point with respect to $c_{\varepsilon}$, implying that $P_{\varepsilon}$ is a minimal polyhedron.
\end{proof}

Now, consider some class  $(S,U)^E$ with $S > 4$ and $S < U \leq 2S-4$. Then, if we set $k=U-S$ and $S_0=S-k$, we have $0 < k \leq S-4$
and $4 \leq S_0$. In other words,  $(S,U)^E = (S_0+k,S_0+2k )^E$ for some $S_0 \geq 4$ and $k > 0$.
Now, by the proof in Case 1, the class  $(S_0 , S_0)^E$ contains a minimal polyhedron, e.g. a right pyramid $P_{S_0}(h)$ with a regular $(S_0-1)$-gon as its base, where $h >0$ is arbitrary. Note that the degree of every vertex of $P_{S_0}(h)$ on its base is $3$, and thus, applying Lemma~\ref{lem:inductive_step_12} yields a minimal polyhedron in class  $(S_0+1,S_0+2)^E$ having a vertex of degree $3$. Repeating this argument $(k-1)$ times, we obtain a minimal polyhedron in class  $(S,U)^E$.

\subsubsection{Case 3} $S > 4$ and $\frac{S}{2}+2 \leq U < S$.\\
Note that these inequalities are equivalent to $U > 4$ and $U < S \leq 2U-4$.
For the proof in this case we need Lemma~\ref{lem:inductive_step_21}.

\begin{lem}\label{lem:inductive_step_21}
Assume that there is a minimal polyhedron $P$ in class  $(S,U)^E$ having a triangular face. Then there is a minimal polyhedron $P'$ in class $(S+2,U+1)^E$ having a triangular face $F'$.
\end{lem}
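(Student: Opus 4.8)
The statement is the ``dual'' of Lemma~\ref{lem:inductive_step_12}: there, a vertex of degree $3$ was cut off; here we should do the reciprocal operation on a triangular face. The idea is to take the triangular face $F = \conv\{q_1,q_2,q_3\}$ and replace it by a small ``tent'': cut $P$ along three planes, one through each edge $[q_i,q_j]$ of $F$, tilted slightly outward so that they meet in a new apex $q^*$ sitting just above the centroid of $F$, at height $\varepsilon$. Equivalently, $P'$ is obtained from $P$ by gluing a very flat triangular pyramid onto $F$, then perturbing. This removes the face $F$ but creates three new triangular faces, and creates one new vertex $q^*$, so $f$ increases by $2$ and $v$ increases by $1$, giving a polyhedron in the combinatorial class $(S+2,U+1)^C$; the new vertex $q^*$ lies on three triangular faces, so $P'$ has a triangular face as required for the induction. (Alternatively, the cleanest route is to apply Lemma~\ref{lem:inductive_step_12} and Lemma~\ref{lem:polarity}: if $P$ is minimal in $(S,U)^E$ with a triangular face, then $P^\circ$ with an appropriate center is minimal in $(U,S)^E$ with a $3$-valent vertex, apply Lemma~\ref{lem:inductive_step_12} to get a minimal polyhedron in $(U+2,S+1)^E$ with a $3$-valent vertex, then polarize back to land in $(S+1+2\text{-swap})$ — but the indices only match if $S\leftrightarrow U$ is handled carefully, and polarity requires controlling the center of mass, so a direct construction is more transparent.)

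\textbf{Key steps.} First, fix the triangular face $F=\conv\{q_1,q_2,q_3\}$ of the minimal polyhedron $P$, with $c=c(P)$. For small $\varepsilon>0$, let $q^*(\varepsilon)$ be the point at distance $\varepsilon$ from $\aff F$ on the outer side, directly ``above'' a fixed interior point of $F$, and set $P'_\varepsilon = \conv(P \cup \{q^*(\varepsilon)\})$. For $\varepsilon$ sufficiently small, $q^*(\varepsilon)$ sees exactly the face $F$, so $P'_\varepsilon$ has the three new triangular faces $\conv\{q^*,q_i,q_j\}$ in place of $F$, the new $3$-valent vertex $q^*$, and no other combinatorial change; hence $P'_\varepsilon$ has $S+2$ faces and $U+1$ vertices. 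Second, by Lemma~\ref{lem:magnitude} (applied in the form that adding a set of diameter $O(\varepsilon)$ near $F$ moves the center of mass by $O(\varepsilon^2)$, or more simply by a direct volume estimate, since the added tent has volume $O(\varepsilon)$ and is at bounded distance, $|c(P'_\varepsilon)-c| = O(\varepsilon)$ with the displacement vector essentially perpendicular to $F$), the center of mass moves only slightly and in a controlled direction. Third, use Remark~\ref{rem:smallperturb}: every edge, vertex and face of $P$ away from $F$ is perturbed only slightly (normals and edge directions change by $O(\varepsilon)$), so each retains its equilibrium point; since $P$ was minimal, by \eqref{Euler} and \eqref{Poincare} every edge of $P$ carried an equilibrium, hence so does every edge of $P'_\varepsilon$ outside the tent. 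Fourth, and this is the crux, show the three new edges $[q^*,q_i]$ each carry an equilibrium with respect to $c(P'_\varepsilon)$: the projection of $c(P'_\varepsilon)$ onto the line of $[q^*,q_i]$ must land in its relative interior and the two dihedral angles must be acute. This is the analogue of the edge-$F$ estimate at the end of the proof of Lemma~\ref{lem:inductive_step_12}; one checks that $|q^*-h|,|q_i-h|\ge \gamma'\varepsilon$ for the relevant projected point $h$, and that the dihedral angles are bounded away from $\pi/2$ by $\gamma''\varepsilon$, so the $O(\varepsilon)$-or-smaller displacement of the center of mass (Lemma~\ref{lem:magnitude}) cannot destroy these equilibria. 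One also must verify that $F$ no longer contributes a stable equilibrium and the new apex $q^*$ does contribute an unstable one — but this is automatic: $F$ is gone, and $q^*$ is a strict local maximum of the distance from $c(P'_\varepsilon)$ since it is the unique farthest point of the tent, as noted in the last sentence of Remark~\ref{rem:smallperturb}. Counting with \eqref{Euler} and \eqref{Poincare} then forces $P'_\varepsilon$ to lie in $(S+2,U+1)^E$ and to be minimal.

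\textbf{Main obstacle.} The only delicate point is the fourth step: confirming that the three new edges of the tent each support an equilibrium point for all sufficiently small $\varepsilon$. The subtlety is that both the edge lengths relevant to the projection and the deviations of the dihedral angles from $\pi/2$ scale linearly in $\varepsilon$, so a crude ``center of mass barely moves'' argument is not quite enough — one needs the sharper statement that the displacement of $c$ is itself $O(\varepsilon)$ \emph{with a small constant} (indeed $O(\varepsilon^2)$ if one sets it up as a perturbation of fixed total volume, in the spirit of Lemma~\ref{lem:magnitude}), or alternatively that the displacement is, to leading order, along a direction that does not spoil the inequalities. This is exactly the type of estimate carried out at the close of the proof of Lemma~\ref{lem:inductive_step_12}, and I would reproduce that computation in dualized form, citing Lemma~\ref{lem:magnitude} for the magnitude of $|c(P'_\varepsilon)-c|$. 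Everything else — the combinatorial bookkeeping, the persistence of far-away equilibria, the appearance of the new unstable vertex — is routine given Remark~\ref{rem:smallperturb}.
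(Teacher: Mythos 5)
Your combinatorial bookkeeping and the persistence of the far-away equilibria are fine, but you have misidentified the crux, and the step you dismiss as ``automatic'' is exactly where the construction can fail. If the apex $q^*$ sits above a \emph{fixed} interior point $p_0$ of $F$, then for a very flat tent the foot of the perpendicular from $c'=c(P'_\varepsilon)$ onto the plane of each new face lies within $O(\varepsilon)$ of $c'_F$ (the projection of $c'$ onto $\aff F$), while the three new faces project onto the three triangles $\conv\{p_0,q_i,q_j\}$, which partition $F$ and meet only along the segments $[p_0,q_i]$. Hence, unless $p_0$ coincides with $c'_F$ up to $O(\varepsilon)$, only \emph{one} of the three new faces inherits a stable point and the other two carry none, so $P'_\varepsilon$ lands in class $(S,\ast)^E$ rather than $(S+2,U+1)^E$. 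The apex is not a local maximum of the distance from $c'$ either: writing the support condition at $q^*$ out, one needs $\langle q_i-p_0,\,p_0-c'_F\rangle\leq 0$ for all $i$, and since the vectors $q_i-p_0$ positively span the plane of $F$, this forces $p_0=c'_F$ (your phrase ``unique farthest point of the tent'' is also not the relevant criterion, and is false for the global distance function since the $q_i$ are farther from $c'$ than $q^*$ is). So the real difficulty is a fixed-point problem: the apex must be placed above the projection of the \emph{new} center of mass, which itself depends on where the apex is placed.

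This is precisely what the paper's proof addresses and what your argument is missing. The paper parametrizes the apex as $q=c_F+\varepsilon v+\alpha u$ with $u=\bar c-c_F$ ($\bar c$ the centroid of $F$), shows $c'_F\in\relint\conv\{c_F,\,c_F/4+3\bar c/4\}$, and applies the intermediate value theorem to $d(\alpha)=(c_F+\alpha u-c'_F)\cdot u$ (with $d(0)<0<d(1)$) to find $\alpha_0$ making $q$, $c'_F$ and $c'$ collinear; that collinearity is what forces all three new faces and the apex to carry equilibria. Your ``fourth step'' on the new edges is then not the bottleneck at all: once every face and vertex of the minimal candidate carries an equilibrium, $S=f$ and $U=v$ together with (\ref{Euler}) and (\ref{Poincare}) force $H=e$, so the edges come for free by counting. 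To repair your write-up you must replace the fixed apex by this one-parameter family and supply the continuity argument; no purely quantitative estimate on $|c'-c|$ of the kind you propose can substitute for it.
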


\begin{proof}
Let $c=c(P)$, and let $c_F$ be its orthogonal projection on the plane of $F$. Since $P$ is a minimal polyhedron, $c_F$ is a relative interior point of $F$, and an equilibrium point with respect to $c$ (see also Fig.~\ref{fig:inductive_step_21} for illustration).
Let $\bar c$ be the centroid of $F$ and define the vector $u$ as $\bar c-c_F$.
Let $v$ be the outer unit normal vector of $F$, and for any $0 < \varepsilon$ and $0 \leq \alpha\leq 1$, let $T_{\varepsilon\alpha}$ denote the tetrahedron with base $F$ and apex $q = c_F + \varepsilon v + \alpha u$ such that $T_{\varepsilon\alpha} \cap P = F$.
Let $P_{\varepsilon\alpha} = T_{\varepsilon\alpha} \cup P$, $c'=c(P_{\varepsilon\alpha})$, and $c'_F$ be the orthogonal projection of $c'$ on the plane of $F$.
By Remark~\ref{rem:smallperturb}, for a sufficiently small $\varepsilon$, equilibrium points on all vertices of $P_{\varepsilon\alpha}$ except $q$, as well as on all edges and faces of $P_{\varepsilon\alpha}$ not containing $q$ will be preserved.

\begin{figure}[ht]
\begin{center}
\includegraphics[width=0.7\textwidth]{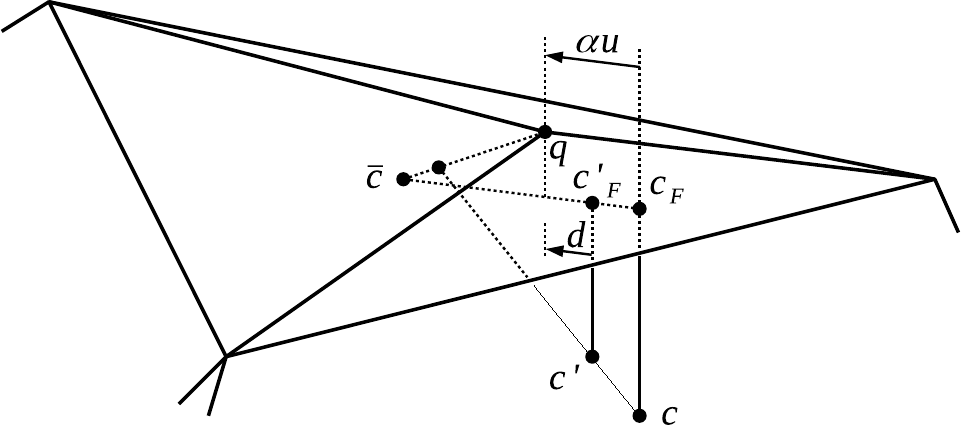}
\caption{Building a tetrahedron on a triangular face of a convex polyhedron}
\label{fig:inductive_step_21}
\end{center}
\end{figure}

It is also easy to see from simple geometric considerations that for small values of $\varepsilon$, every face and vertex of $P_{\varepsilon\alpha}$ contains an equilibrium point with respect to $c'$ if $q$, $c'_F$ and $c'$ are collinear.
In the special case of $u=0$, those points are obviously collinear. In any other case, it is also straightforward to see that $c'_F \in \relint \conv \{c_F,c_F/4+3\bar c /4 \}$. Let us define $d(\alpha) = (c_F + \alpha u - c'_F)\cdot u$. Since $d$ continuously varies with $\alpha$ and $d(0)<0, d(1)>0$, for any small $\varepsilon>0$ there is an $\alpha_0$ such that apex  $q$ and all edges and faces it is contained in have equilibrium points.
\end{proof}

\begin{rem}\label{rem:21_later}
Note that the argument also yields a polyhedron $P'$ such that there is an equilibrium point on each face and at every vertex of $P'$ \emph{with respect to the original reference point}: in this case we may choose the value of $\alpha$ in the proof simply as $\alpha=0$.
\end{rem}

Now we prove Theorem~\ref{thm:main1} for Case 3.
Like in Case 2, if we set $k=S-U$ and $S_0=U-k$, then $S_0 \geq 4$, $k > 0$, and  $(S,U)^E = ( S_0 +2k, S_0+k)^E$.
Consider the right pyramid $P_{S_0}(h)$ in Case 1. This pyramid has $S_0$ faces consisting of $S_0-1$ triangles and one regular $(S_0-1)$-gon shaped face.
Thus, applying the construction in Lemma~\ref{lem:inductive_step_21} $k$ times subsequently yields the desired polyhedron.

\subsection{Proof of Theorem~\ref{thm:main1} for non-polyhedral pairs}\label{subsec:nonpolyhedralpair}
\subsubsection{Case 1}  $2 \leq S \leq 4$ and $2 \leq U \leq 4$.

\begin{lem}\label{lem:complexity_tetrahedron}
Let $S,U \in \{2,3,4\}$. Then $C(S,U)=2R(S,U)$.
\end{lem}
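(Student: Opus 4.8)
The goal is to show $C(S,U) = 2R(S,U)$ for all $S,U \in \{2,3,4\}$. Since the lower bound $C(S,U) \geq 2R(S,U)$ is already available from Remark~\ref{rem:lowerbound}, it suffices to exhibit, for each of these nine classes, a convex polyhedron $P \in (S,U)^E$ with $C(P) = 2R(S,U)$. By \eqref{Rformula}, for $S,U \leq 4$ we have $R(S,U) = 8-S-U$, so the target complexity is $C(P) = 2(8-S-U) = 16-2S-2U$. Equivalently, writing $C(P) = n(P) - N(P) = (f+v+e) - (S+U+H)$ and using \eqref{Euler} and \eqref{Poincare}, the target is $C(P) = 2(f+v) - 2(S+U)$, i.e.\ we want a polyhedron with $f+v = 8$ whose equilibria are exactly the prescribed $S$ and $U$. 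Since $(4,4)$ is the only polyhedral pair with both coordinates in $\{2,3,4\}$ and it is excluded (it is handled via the pyramid/tetrahedron constructions elsewhere, or separately), in every remaining case $f+v=8$ forces $P$ to be combinatorially a tetrahedron ($f=v=4$); indeed $f+v=8$ together with the Steinitz inequalities $f/2+2 \le v \le 2f-4$ pins down $(f,v)=(4,4)$. So the whole lemma reduces to: \emph{for each $(S,U)$ with $2 \le S,U \le 4$ and $(S,U)\neq(4,4)$, there is a homogeneous tetrahedron with exactly $S$ stable and $U$ unstable equilibria.}

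The first step is therefore to enumerate which equilibrium classes are realizable by homogeneous tetrahedra. By Theorem~\ref{thm:tetra} (equivalently Corollary~\ref{cor:antistable}), every homogeneous tetrahedron has $S \ge 2$ and $U \ge 2$; and trivially $S \le f = 4$, $U \le v = 4$. So the candidate classes are exactly the nine in the lemma, and we must show each is nonempty. I would do this by explicit construction: parametrize tetrahedra by their four vertices, compute the centroid $c = \frac14\sum p_i$, and check the equilibrium conditions face-by-face, edge-by-edge, vertex-by-vertex using the criteria in Remark~\ref{rem:smallperturb}(i)--(iii). For a face $F$, stability means the orthogonal projection of $c$ onto $\aff F$ lands in $\relint F$; for a vertex $q$, instability means the plane through $q$ perpendicular to $q-c$ meets $P$ only at $q$; for an edge $E$, a saddle means the foot of $c$ on the line of $E$ lies in $\relint E$ and the relevant dihedral angles are acute. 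One can start from the regular tetrahedron (which is in $(4,4)^E$) and deform it: flattening toward one face kills stable points, stretching toward one vertex kills unstable points, and by continuity/genericity one sweeps through the intermediate counts. Concretely I expect explicit coordinates (or a short computer search, as the proof sketch in the introduction mentions for this block of classes — "examples found by computer search", Tables~\ref{9classes}, \ref{6classes}) to furnish a witness tetrahedron for each of the nine classes.

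The main obstacle is not the lower bound (already done) nor the combinatorial reduction (immediate from Steinitz), but verifying that each prescribed pair $(S,U)$ is actually \emph{attained} — in particular the "extreme" cases like $(2,2)^E$, where only two faces and two vertices carry equilibria and all four edges must carry saddles (note $S+U-H=2$ forces $H = S+U-2$, so $(2,2)$ needs $H=2$, $(4,4)$ would need $H=6$ but a tetrahedron has only $6$ edges — fine — while $(2,2)$ needs exactly $2$ of the $6$ edges to be saddles and the rest degenerate-free non-equilibria, which is delicate). Checking nondegeneracy (no equilibrium in the interior of a lower face of lower dimension masquerading) and ensuring the centroid's projections fall strictly inside the right relative interiors requires either carefully chosen rational coordinates with exact arithmetic or an interval-verified numerical search. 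I would present one explicit tetrahedron per class in a table, listing vertices, centroid, and the resulting $(S,U,H)$, and note that the equilibrium counts are stable under perturbation (Remark~\ref{rem:smallperturb}(a)--(c)) so that a verified numerical example suffices; combined with the lower bound \eqref{eq:lowerbound}, this yields $C(S,U) = 2R(S,U) = 2(8-S-U)$ for all nine classes, completing the proof of Lemma~\ref{lem:complexity_tetrahedron}.
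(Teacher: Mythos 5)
Your proposal is correct and follows essentially the same route as the paper: reduce to tetrahedra via the lower bound $C(S,U)\geq 2R(S,U)=2(8-S-U)$ and the fact that $f+v=8$ forces $(f,v)=(4,4)$, then exhibit an explicit (computer-found) tetrahedron in each class $(S,U)^E$ with $S,U\in\{2,3,4\}$, which is exactly what the paper does with Table~\ref{9classes} (plus the regular tetrahedron for $(4,4)^E$). The only content missing from your write-up is the actual table of witness coordinates, which is precisely the substance the paper supplies.
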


\begin{proof}
Table \ref{9classes} contains an example for a tetrahedron in each of the 9 classes 
(illustrated in Figure \ref{fig:tetra}) and for the
tetrahedron we have $n=f+v+e=14$, consequently an upper bound for complexity can be computed as
$C(S,U)\leq 14-S-U-H=16-2S-2U$. Since  from  (\ref{Rformula}) we have the same for the lower bound
we proved the claim.
\end{proof}

\begin{table} [ht] 
{\tiny
\begin{tabular}{|c||c|c|c|c|c|c|c|c|c|c|c|c|c|c|c|c|c|c|c|}
\hline
 & \multicolumn{5}{c} {Non-constant vertex coordinates}  \vline & \multicolumn{14}{c} {Equilibria on}  \vline  \\
\cline{7-20}
 & \multicolumn{5}{c} {                               }  \vline & \multicolumn{4}{c} {faces}   \vline & \multicolumn{4}{c} {vertices}  \vline & \multicolumn{6}{c} {edges}  \vline \\
\cline{2-20}
Class&  $C_x$ &$C_y$ & $D_x$ & $D_y$ & $D_z$ & {\rotatebox[origin=c]{90}{$ABC$}} & {\rotatebox[origin=c]{90}{$ABD$}} & {\rotatebox[origin=c]{90}{$ACD$}} & {\rotatebox[origin=c]{90}{$BCD$}}
& $A$ & $B$ & $C$ & $D$ & {\rotatebox[origin=c]{90}{$AB$}} & {\rotatebox[origin=c]{90}{$AC$}} & {\rotatebox[origin=c]{90}{$AD$}} & {\rotatebox[origin=c]{90}{$BC$}} & {\rotatebox[origin=c]{90}{$BD$}} & {\rotatebox[origin=c]{90}{$CD$}} \\
\hline
\hline
$(2,2)$   & 3.2 & 1.9 & -2.2 & 0.3 & 1.8 & 0 & 0 & 1 & 1 & 0 & 0 & 1 & 1 & 1 & 0 & 0 & 0 & 0 & 1 \\
\hline
$(2,3)$  & 1.9 & 5.3 & 1.9 & -0.9 & 5.2 & 0 & 0 & 1 & 1 & 1 & 0 & 1 & 1 & 0 & 1 & 1 & 0 & 0 & 1 \\
\hline
$(2,4)$  &  -0.9 & 5.3 & 1.9 & 0.9 & 5.2 & 0 & 0 & 1 & 1 & 1 & 1 & 1 & 1 & 1 & 1 & 0 & 0 & 1 & 1 \\
\hline
$(3,2)$  & 1.0 & 2.7 & -0.9 & -4.1 & 3.4 & 0 & 1 & 1 & 1 & 0 & 0 & 1 & 1 & 0 & 0 & 1 & 0 & 1 & 1 \\
\hline
$(3,3)$  & 1.0 & 5.7 & 0.5 & -0.5 & 1.3 & 1 & 0 & 1 & 1 & 1 & 0 & 1 & 1 & 0 & 1 & 1 & 1 & 0 & 1 \\
\hline
$(3,4)$  & 0.5 & 2.8 & 0.5 & -0.7 & 1.2 & 1 & 0 & 1 & 1 & 1 & 1 & 1 & 1 & 0 & 1 & 1 & 1 & 1 & 1 \\
\hline
$(4,2)$  & 3.2 & 3.8 & -2.2 & -2.9 & 2.5 & 1 & 1 & 1 & 1 & 0 & 0 & 1 & 1 & 1 & 1 & 0 & 0 & 1 & 1 \\
\hline
$(4,3)$  & 1.9 & 5.3 & 1.9 & 5.0 & 1.8 & 1 & 1 & 1 & 1 & 1 & 0 & 1 & 1 & 0 & 1 & 1 & 1 & 1 & 1 \\
\hline
\end{tabular}
}
\normalsize
\vspace{0.5cm}
\caption{Examples for tetrahedra in equilibrium classes $(S,U)^E$, $S,U \in \{2,3,4\}$, $(S,U) \not=4,4$. Constant vertex coordinates for all tetrahedra are
$A_x=A_y=A_z=B_y=C_z=0$, $B_x=1$.}
\label{9classes}
\end{table}

\begin{figure}[ht]
\begin{center}
\includegraphics[width=\textwidth]{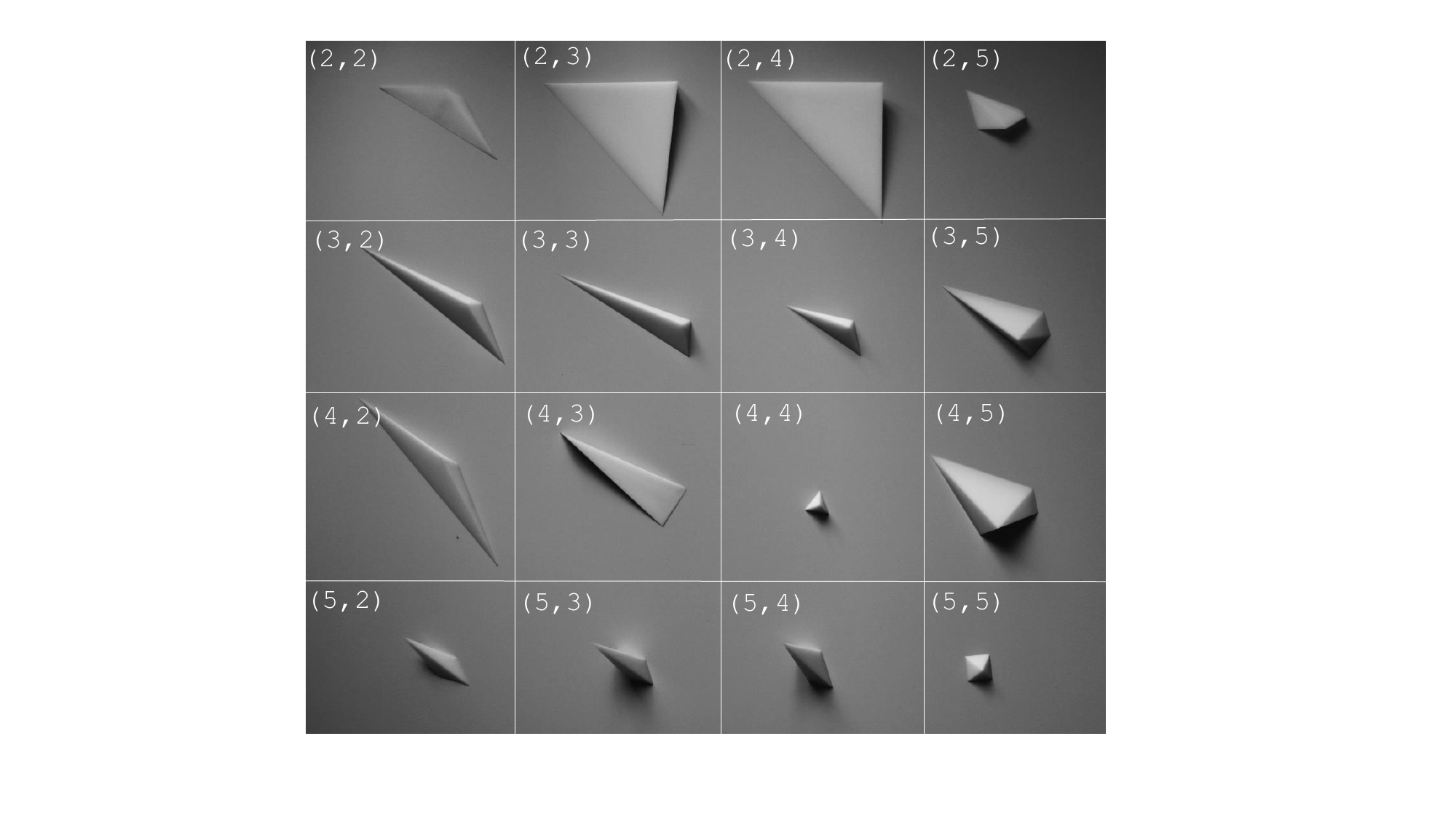}
\caption{The 8 tetrahedra in Table \ref{9classes} and the 6 pentahedra in Table \ref{6classes}, the regular tetrahedron and the symmetrical pyramid in equilibrium classes $(S,U)^E$, $S,U \in \{2,3,4,5\}$
produced by 3D printing.}
\label{fig:tetra}
\end{center}
\end{figure}

\subsubsection{Case 2}  $2 \leq S \leq 4, U=5$ or $2 \leq U \leq 4, S=5$ .\\
This case follows from Lemma~\ref{lem:complexity_pentahedron}.

\begin{lem}\label{lem:complexity_pentahedron}
Let $2 \leq S \leq 4, U=5$ or $2 \leq U \leq 4, S=5$. Then $C(S,U)=2R(S,U)$.
\end{lem}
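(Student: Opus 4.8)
The plan is to mimic the strategy of Lemma~\ref{lem:complexity_tetrahedron}: exhibit, for each of the six classes $(S,U)^E$ with $\{S,U\}\cap\{5\}\neq\emptyset$ and the other value in $\{2,3,4\}$, a single explicit pentahedron realizing the class, and then read off the complexity from the combinatorics of a pentahedron. A convex pentahedron ($f=5$) is combinatorially either a quadrilateral-based pyramid ($v=5$, $e=8$) or a triangular prism ($v=6$, $e=9$); in the first case $n=f+v+e=18$, in the second $n=20$. For any $P$ in one of these classes we have $C(P)=n-N=n-(S+U+H)=n-(2S+2U-2)$ by the Poincar\'e--Hopf relation \eqref{Poincare}, and comparing this with the formula \eqref{Rformula} for $R(S,U)$ — which in the regime $S=5>4$, $S>2U-4$ (and symmetrically) gives $R(5,U)=\lceil 5/2\rceil-U+2=5-U$, i.e. $2R=10-2U$, matching $n-(2\cdot5+2U-2)=18-8-2U$ precisely when $n=18$ — shows that a quadrilateral-based pyramid (five faces, five vertices) with the prescribed equilibria attains the lower bound $2R(S,U)$ from \eqref{eq:lowerbound}. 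So it suffices to produce pyramids with the right number of stable and unstable equilibria.

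The concrete work is then a finite computer search, exactly parallel to Table~\ref{9classes}: fix a combinatorial model of a quadrilateral-based pyramid with a few free vertex coordinates, compute the center of mass (a rational function of those coordinates, since the solid decomposes into tetrahedra), and for each candidate configuration check, via the criteria in Remark~\ref{rem:smallperturb}(i)--(iii), which of the $5$ faces, $5$ vertices and $8$ edges carry nondegenerate equilibria; then select one configuration per target class. I would present the outcome as Table~\ref{6classes} together with Figure~\ref{fig:tetra}, listing the non-constant vertex coordinates and the incidence pattern of equilibria, and note that in each tabulated example $H=S+U-2$ and $n=18$, hence $C(P)=18-(S+U+H)=20-2S-2U$. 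Since for all six pairs $R(S,U)$ from \eqref{Rformula} equals $10-S-U$ (one checks the three nontrivial branches: for $(5,2),(5,3),(5,4)$ use the first branch, for $(2,5),(3,5),(4,5)$ the second), the upper bound $C(S,U)\le C(P)=2R(S,U)$ coincides with the lower bound $C(S,U)\ge 2R(S,U)$ of Remark~\ref{rem:lowerbound}, proving the lemma.

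The main obstacle is not conceptual but existential: one must actually \emph{find} a homogeneous quadrilateral-based pyramid in each of the six classes, in particular one with only two stable equilibria or only two unstable equilibria while keeping all faces and vertices nondegenerate — the monostatic-leaning classes $(2,5)^E$ and $(5,2)^E$ are the delicate ones, since forcing $S=2$ means three of the five faces must fail to contain the foot of the center of mass, and one has to verify this is compatible with a genuine convex pyramid rather than a degenerate limit. I expect this to be handled by the same numerical search that produced the tetrahedral examples, with the listed coordinates then checked by a short direct computation; once a valid configuration is in hand, the verification that it lies in the claimed class and has $n=18$ is routine, and the complexity bookkeeping above closes the argument.
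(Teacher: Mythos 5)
Your proposal is correct and follows essentially the same route as the paper: the paper's proof of this lemma is precisely to exhibit, in Table~\ref{6classes}, one quadrilateral-based pyramid ($f=v=5$, $e=8$, $n=18$) per class found by computer search, and then to match the resulting upper bound $C(P)=20-2S-2U$ with the lower bound $2R(S,U)$ from (\ref{Rformula}). Your additional bookkeeping (ruling out the triangular prism, checking the branches of $R$) is consistent with what the paper leaves implicit.
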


\begin{proof}
Table \ref{6classes} contains an example for a  pentahedron in each of the 6 classes
(illustrated in Figure \ref{fig:tetra}) and for the
pentahedron we have $n=f+v+e=18$, consequently an upper bound for complexity can be computed as
$C(S,U)\leq 18-S-U-H=20-2S-2U$. From (\ref{Rformula}) we obtain the same lower bound for all 6 classes
so we proved the claim.
\end{proof}

\begin{table} [ht] 
{\tiny
\begin{tabular}{|c||c|c|c|c|c|c|c|}
\hline
 Class & \multicolumn{7}{c} {Non-constant vertex coordinates} \vline  \\
\hline
&  $C_x$ &$C_y$ & $D_x$ & $D_y$ & $E_x$ & $E_y$ & $E_z$ \\
\hline
\hline
$(2,5)$   & 1.0 & 1.7 & 0.5 & -0.3 & 2.1 & 1.2 & 1.2  \\
\hline
$(3,5)$    & 1.0 & 1.7 & 3.8 & -2.2 & 1.6 & 0.9 & 0.9  \\
\hline
$(4,5)$   & 2.5 & 1.4 & 3.8 & -2.2 & 2.0 & 1.2 & 1.2  \\
\hline
$(5,2)$   & 1.0 & 1.7 & 0.9 & 0.5 & -0.6 & -1.1 & -1.1  \\
\hline
$(5,3)$   & 1.0 & 1.7 & 0.9 & 0.5 & 1.5 & 2.6 & 2.6  \\
\hline
$(5,4)$   & 1.0 & 1.7 & 1.3 & 0.8 & 1.5 & 2.6 & 2.6  \\
\hline
\end{tabular}
}
\normalsize
\vspace{0.5cm}
\caption{ Examples for pentahedra in equilibrium classes $(i,5)$ and $(5,i)$ $i \in \{2,3,4\}$. Constant vertex coordinates for all pentahedra are
$A_x=A_y=A_z=B_x=C_z=D_z=0$, $B_y=1$.}
\label{6classes}
\end{table}
\subsubsection{Case 3} $S \geq 5$ and $U > 2S-4$, or $2 \leq S \leq 4$ and $U \geq 6$.
First, we prove the following lemma.

\begin{lem}\label{lem:face_trunc}
Let $P \in (S,U)^E$ be a convex polyhedron with $f$ faces and $v$ vertices. Let $q_i , i=1...j$ be successive vertices of an $m$-gonal ($m\geq, j\geq 3$) face $F$ of $P$ such that 
\begin{itemize}
 \item[i)] the lines $\aff(\{q_1, q_2\})$ and $\aff(\{q_{j-1}, q_j\})$ intersect at some point $q$ with the property $\left| q-q_1 \right| > \left| q-q_2 \right|$; 
 \item[ii)] both edges $E_a = [q_1, q_2]$ and $E_b= [q_{j-1}, q_j]$ contain saddle points; 
 \item[iii)] the vertices $q_i , i = 2...j-1$ are trivalent.
\end{itemize}
Then there is convex polyhedron $P' \in (S,U+2)^E$ with $f+1$ faces and $v+2$ vertices.
\end{lem}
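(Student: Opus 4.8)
\textbf{Proof proposal for Lemma~\ref{lem:face_trunc}.}

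The plan is to cut $P$ with a plane that slices off a triangular sliver near the vertex $q$ of the (extended) face $F$, thereby replacing the two edges $E_a$ and $E_b$ near $q_1$ by a single new edge, creating one new triangular face and two new vertices while destroying one old vertex. Concretely, let $c=c(P)$ and pick a plane $H$ parallel to $\aff(F)$-direction choices aside, transverse to both lines $\aff(\{q_1,q_2\})$ and $\aff(\{q_{j-1},q_j\})$, separating $q_1$ (and no other vertex of $P$) from the rest; take $P'=P\cap H^-$ for the appropriate closed halfspace. For a sufficiently shallow cut, $H$ meets exactly the edges emanating from $q_1$: these are $E_a$, $E_b$, and the third edge at $q_1$ not on $F$ (here I would use that $q_1$ has degree $3$ — if the lemma intends $q_1$ to possibly have higher degree, the plane meets all edges at $q_1$ and one checks the count still works, but the trivalence hypothesis is on $q_2,\dots,q_{j-1}$, so I should first confirm whether $q_1$ is trivalent; if not, the sliver is a small polygon rather than a triangle and $f,v$ increase by more — so the natural reading is that the cut removes $q_1$ and the relevant edges, and the hypotheses guarantee the combinatorics). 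The upshot of the cut: $v\to v+2$ (lose $q_1$, gain the three — wait, two — new vertices; I need exactly two new vertices, which forces the cut plane to meet exactly three edges at $q_1$, giving three new vertices and losing one, a net $+2$), and $f\to f+1$ (one new face), consistent with Euler.

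Next I would verify the equilibrium count. By Remark~\ref{rem:smallperturb}, for $\varepsilon$ small every face, vertex, and edge of $P'$ that stays a bounded distance away from $q_1$ retains its equilibrium status with respect to $c'=c(P')$, since by Lemma~\ref{lem:magnitude} we have $|c'-c|=O(\varepsilon^3)$, which is negligible compared to the $O(1)$ or $O(\varepsilon)$ clearances of those equilibria. The content is in the new features near $q_1$: the new triangular face $F'$, the new vertices, and the new and shortened edges. The new triangular face should carry \emph{no} stable equilibrium (it is nearly perpendicular to $c-q_1$ in the wrong sense — it faces roughly toward $q$, away from $c$), so $S$ is unchanged. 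Among the three new vertices, I expect exactly two to be unstable equilibria and one not, yielding $U\to U+2$; and the Poincaré–Hopf relation (\ref{Poincare}) together with the fact that $P'$ is nondegenerate (true for generic shallow cuts, by the perturbation observations in Remark~\ref{rem:smallperturb}(a)–(c)) then pins down the edge (saddle) count automatically. The role of hypotheses (i) and (ii) is exactly to control which of the new short edges — the remnants of $E_a$ and $E_b$ inside $F$, and the two edges of $F'$ lying in the old faces adjacent to $E_a,E_b$ — inherit saddle points: because $|q-q_1|>|q-q_2|$, the remnant of $E_a$ (between $q$ and the cut) lies on the far side, and since $E_a$ already had a saddle, the projection of $c'$ onto the line of $E_a$ still lands in the surviving sub-segment for $\varepsilon$ small; similarly for $E_b$. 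This is the bookkeeping that guarantees we land in $(S,U+2)^E$ rather than some other class.

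The main obstacle I anticipate is precisely this local equilibrium bookkeeping near $q_1$: one must check that after the cut the new vertices behave as claimed (two unstable, not three or one), that the new triangular face $F'$ does not accidentally acquire a stable equilibrium, and that the two ``new'' edges of $F'$ (those created inside the neighboring faces of $E_a$ and $E_b$) do \emph{not} carry spurious saddles that would push the edge count — and hence, via (\ref{Poincare}), the vertex or face count — off target. I would handle this by an explicit local model: introduce coordinates with $q_1$ at the origin, $c-q_1$ along a fixed axis, parametrize the three edges at $q_1$ by their (fixed) directions and the cut plane by its small offset $\varepsilon$, and compute the relevant orthogonal projections and dihedral angles to leading order in $\varepsilon$, invoking Remark~\ref{rem:smallperturb}(i)–(iii). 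Conditions (i)–(iii) are tailored so that in this local model the signs come out uniformly for all small $\varepsilon$; the genericity needed for nondegeneracy is automatic for all but finitely many cut directions. Once the local picture is settled, Lemma~\ref{lem:magnitude} upgrades it to the global statement, and counting faces and vertices finishes the proof.
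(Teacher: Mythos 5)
There is a genuine gap, and it starts with a misreading of the geometry. The point $q$ in condition~(i) is the intersection of the two \emph{extended} edge-lines $\aff(\{q_1,q_2\})$ and $\aff(\{q_{j-1},q_j\})$; it is typically outside $F$ and is not a vertex of $P$, and in particular it is not $q_1$. The edges $E_a=[q_1,q_2]$ and $E_b=[q_{j-1},q_j]$ do not share the vertex $q_1$, so a shallow cut that separates only $q_1$ from the rest of $P$ cannot meet $E_b$ at all and therefore cannot produce a second new unstable vertex near the saddle point of $E_b$. It also makes no use of hypotheses (ii) (for $E_b$) and (iii), and the $f{+}1$, $v{+}2$ count would require $q_1$ to be trivalent, which is not assumed. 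The cut the lemma actually needs is a thin sliver, nearly \emph{parallel} to the plane of $F$, which shaves off the entire chain $q_2,\dots,q_{j-1}$ and meets $F$ in a line close to the two saddle points $x_a\in E_a$ and $x_b\in E_b$; trivalence of $q_2,\dots,q_{j-1}$ is exactly what makes this cut add one face and a net two vertices, and the two new vertices $y_a\in E_a$, $y_b\in E_b$ are the candidates for the new unstable equilibria. Your opening sentence ("replacing the two edges $E_a$ and $E_b$ \ldots by a single new edge") gestures at this, but it is inconsistent with the cut you then construct, which isolates $q_1$.

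Even setting the choice of cut aside, the core of the proof is missing: you defer the equilibrium bookkeeping to an unspecified "explicit local model," but that bookkeeping is where all the work lies, and perturbative sign-checking to leading order in $\varepsilon$ does not suffice. The paper parametrizes the cut by where it meets $E_a$ and $E_b$ (points $y_a(s)$, $y_b(t)$), and applies the Cube Separation Theorem (a Brouwer-type argument) to find $(\bar s,\bar t)$ at which the orthogonal projections of the \emph{moving} center of mass $c(s,t)$ onto the lines of $E_a$ and $E_b$ coincide exactly with $y_a(\bar s)$ and $y_b(\bar t)$; a Lipschitz estimate on $c(s,t)$ together with condition~(i) then lets one nudge to $(s',t')$ so that both projections land strictly inside the surviving sub-edges, which simultaneously forces $y_a(s')$, $y_b(t')$ to be unstable vertices and preserves the saddles, leaving $S$ unchanged. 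Nothing in your proposal replaces this balancing step. Worse, the vertex truncation you do propose is essentially the one in Lemma~\ref{lem:inductive_step_12}, which generically places a \emph{stable} equilibrium on the new face and makes all three new vertices unstable, i.e.\ it steers toward class $(S+1,U+2)^E$; your assertions that the new face carries no stable point and that exactly two of the three new vertices are unstable are therefore not only unproven but pointed in the wrong direction.
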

\begin{figure}[ht]
\begin{center}
\includegraphics[width=0.9\textwidth]{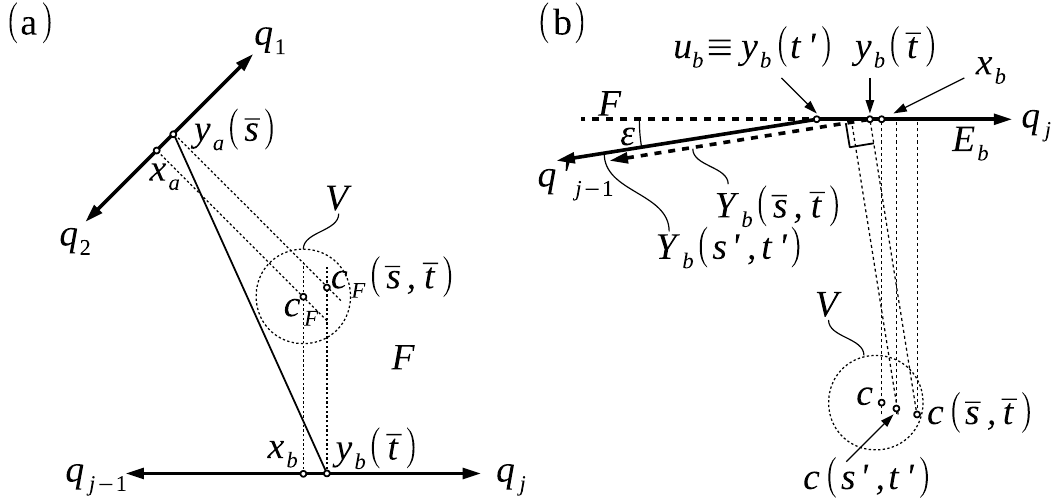}
\caption{Increasing the number of unstable equilibria by two. Views perpendicular to the plane $F$ (a) and edge $[u_a,u_b]$ (b).}
\label{fig:uplus2}
\end{center}
\end{figure}
\begin{proof}
Let the saddle points on $E_a$ and $E_b$ be denoted by $x_a$ and $x_b$.
In the proof, based on Remark~\ref{rem:smallperturb}, we show that there is an arbitrarily small truncation of $P$ by a plane that intersects $F$ in a line close to $x_a$ and $x_b$ that results in two new unstable vertices $u_a$ and $u_b$.

We choose a suitable truncation from a $2$-parameter family of truncations defined as follows: For any $t \in [0,1]$, set $y_a(t) = t q_2 + (1-t)q_1$ and $y_b(t) = t q_{j-1}+ (1-t) q_j$. Let $G(s,t)$ be the plane that intersects $[q_1,q_2]$ at $y_a(s)$ and $[q_{j-1},q_j]$ at $y_b(t)$, whose angle with the plane of $F$ is a sufficiently small value $\varepsilon > 0$ (the term `sufficiently small' is explained in the next paragraph) and truncates the vertices $q_2,q_3, \ldots, q_{j-1}$. For $i=2,3,\ldots,j-1$, let $q_i(s,t)$ be the intersection of $G(s,t)$ with the edge of $P$ starting at $q_i$ and not contained in $F$. Finally, let $P(s,t)$ be the truncation of $P$ by $G(s,t)$, that is, $P(s,t) = \cl(P \setminus \conv \{ y_a(s), y_b(t), q_2,...,q_{j-1}, q_2(s,t),\ldots,q_{j-1}(s,t) \}$.
We denote the center of mass of $P(s,t)$ by $c(s,t)$, and the projection of $c$ and $c(s,t)$ onto the plane of $F$ by $c_F$ and $c_F(s,t)$, respectively.
Furthermore, we denote the new edge of $P(s,t)$ starting at $y_a(s)$ and different from $[y_a(s),y_b(t)]$ by $Y_a(s,t)$, and define $Y_b(s,t)$ similarly.

We choose some $\varepsilon > 0$ to satisfy the following conditions: with respect to any point $c' \in V$, the \emph{original polyhedron} $P$ has equilibrium points on the same faces and edges, and at the same vertices, as with respect to the center of mass $c$ of $P$, where $V$ is the locus of the centers of mass of all truncations of $P$ by the plane $G(s,t)$, $s,t \in [0,1]$. Furthermore, we assume also that $G(s,t)$ truncates no vertex or equilibrium point of $P$ other than those on $F$, and that there is some arbitrarily small, fixed value $\delta > 0$ (independent of $(s,t)$) such that $c(s,t)$ is a Lipschitz function at every $(s,t)$ with Lipschitz constant $\delta$, i.e. $| c(s+\Delta s, t + \Delta t) - c(s,t)| \leq \delta \sqrt{(\Delta s)^2+(\Delta t)^2}$ for all $s,t \in [0,1]$.

First, we show that for some suitable choice of $s$ and $t$, the orthogonal projections of $c(s,t)$ onto the lines of $E_a$ and $E_b$ are 
$y_a(s)$ and $y_b(t)$, respectively. To do this, we use a consequence of Brouwer's fixed point theorem, the so-called Cube Separation Theorem from \cite{Postnikov}, which states the following:
Let the pairs of opposite facets of a $d$-dimensional cube $K$ be denoted by $F'_i$ and $F''_i$, $i=1,2, \ldots, d$, and let $C_i$, $i=1,2,\ldots,d$ be compact sets such that $C_i$ `separates' $F'_i$ and $F''_i$, or in other words, $K \setminus C_i$ is the disjoint union of two open sets $Q'_i, Q''_i$ such that $F_i' \subset Q'_i$, and $F''_i \subset Q''_i$. Then $\bigcap_{i=1}^d C_i \neq \emptyset$.

To apply this theorem, we set $K= \{ (s,t) : 0 \leq s,t \leq 1 \}$, and define $Q'_1$, $C_1$ and $Q''_1$ as the set of pairs $(s,t)$ such that the orthogonal projection of $c(s,t)$ onto the line of $E_a$ is a relative interior point of $[y_a(s), q_1]$, coincides with $y_a(s)$, or does not belong to $[y_a(s), q_1]$, respectively. We define $Q'_2$, $C_2$ and $Q''_2$ similarly. Then these sets satisfy the conditions of theorem, and we obtain a pair $(\bar{s},\bar{t})$ with the desired property.
Note that by the choice of $\varepsilon > 0$, it holds that in a neighborhood of $(\bar{s},\bar{t})$, the orthogonal projection of $c(s,t)$ onto the line of $Y_a(s,t)$ is in the relative interior of $Y_a(s,t)$, and the same holds also for the projection onto the line of $Y_b(s,t)$.
Now we choose some $(s',t')$ sufficiently close to $(\bar{s},\bar{t})$ such that the intersections of $G(s',t')$ and $G(\bar{s},\bar{t})$ with $F$ are parallel, and that of $G(s',t')$ is closer to $q_2$ and $q_{j-1}$ than that of $G(\bar{s},\bar{t})$.
By the Lipschitz property of $c(s,t)$, we have that the distance of the two intersection lines is greater than $|c(s',t') - c(\bar{s},\bar{t})|$, and hence, the projections of $c(s',t')$ onto the lines of $E_a$ and $E_b$ lie in the relative interior of the segments $[y_a(s'),q_1]$ and $[y_b(t'),q_j]$, respectively. From this it readily follows that both these edges of $P'=P(s',t')$ and also $Y_a(s',t')$ and $Y_b(s',t')$ contain saddle points with respect to $c(s',t')$.
This implies also that $y_a(s')$ and $y_b(t')$ are vertices of $P'$ carrying unstable equilibrium points, and the assertion follows.
\end{proof}

\begin{cor}\label{cor:face_trunc}
Let conditions (i) and (iii) of Lemma~\ref{lem:face_trunc} hold and (ii) be modified as follows:

ii) $q_1$ contains an unstable and $E_b=[ q_{j-1}, q_j]$ contains a saddle-type equilibrium point.

Then there exists a polyhedron $P'' \in (S,U+1)^E$ with $f+1$ faces and $v+1$ vertices.
\end{cor}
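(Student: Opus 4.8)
The plan is to mimic the proof of Lemma~\ref{lem:face_trunc} almost verbatim, but now with a $1$-parameter family of truncations instead of a $2$-parameter one, since only one endpoint of the cutting segment needs to be controlled: the vertex $q_1$ already carries an unstable equilibrium of $P$, and we only want to create \emph{one} new unstable vertex, near the saddle point $x_b$ on $E_b=[q_{j-1},q_j]$. First I would set up, for $t\in[0,1]$, the point $y_b(t)=tq_{j-1}+(1-t)q_j$ and the plane $G(t)$ through a \emph{fixed} point of $E_a$ close to $q_1$ (chosen so small a truncation that $q_1$ is not cut off and remains a nondegenerate unstable vertex, using Remark~\ref{rem:smallperturb}) and through $y_b(t)$, making a sufficiently small angle $\varepsilon>0$ with $\aff F$ and truncating exactly $q_2,\dots,q_{j-1}$. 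Let $P(t)$ be the resulting truncation, $c(t)$ its center of mass, and $Y_b(t)$ the new edge at $y_b(t)$ other than the one lying in the cutting plane. Exactly as before, $\varepsilon$ is chosen so small that (a) $P$ itself has the same equilibria with respect to every $c'$ in the locus $V$ of centers of mass of these truncations as with respect to $c(P)$, (b) no vertex or equilibrium of $P$ off $F$ is disturbed, and (c) $c(t)$ is Lipschitz in $t$ with an arbitrarily small constant $\delta$.

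Next I would run the one-dimensional analogue of the Cube Separation Theorem argument — here just the Intermediate Value Theorem, or Brouwer in dimension $1$. Consider the continuous function measuring the signed position of the orthogonal projection of $c(t)$ onto the line of $E_b$ relative to $y_b(t)$; by choosing the endpoints of the family so that at $t=0$ the projection lies on one side of $y_b(0)$ and at $t=1$ on the other (which holds because sweeping $y_b(t)$ across the whole edge moves the cut line past the projection of the center of mass, while $c(t)$ moves only by the small Lipschitz amount), we get some $\bar t$ with the projection of $c(\bar t)$ onto the line of $E_b$ equal to $y_b(\bar t)$. As in Lemma~\ref{lem:face_trunc}, for the final polyhedron I would then nudge to a nearby parameter $t'$ with the cut at $y_b(t')$ slightly closer to $q_{j-1}$ than $y_b(\bar t)$; the Lipschitz bound on $c(t)$ guarantees the projection of $c(t')$ onto the line of $E_b$ lands in the relative interior of $[y_b(t'),q_j]$, so $E_b$ (shortened) still carries a saddle, $Y_b(t')$ carries a saddle, and $y_b(t')$ becomes a genuine unstable vertex of $P''=P(t')$. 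Since $q_1$ is untouched and still unstable, and all other equilibria of $P$ off $F$ survive, while the truncation adds one face and one vertex (cutting off the $j-2$ trivalent vertices $q_2,\dots,q_{j-1}$ and introducing $y_b(t')$ together with the $q_i(t')$ on the already-present edges — a short count via Euler's formula (\ref{Euler}) and (\ref{Poincare}) confirms the net change is $f\mapsto f+1$, $v\mapsto v+1$), we land in $(S,U+1)^E$ as claimed.

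The main obstacle, as in the parent lemma, is verifying that the small truncation genuinely produces the new \emph{unstable} equilibrium at $y_b(t')$ and preserves the saddle on the shortened $E_b$ and on $Y_b(t')$ simultaneously — this is the delicate interplay between the $\varepsilon^3$-order motion of the center of mass (Lemma~\ref{lem:magnitude}) and the $\varepsilon$-order smallness of the relevant edge lengths and dihedral angles, which forces the Lipschitz constant $\delta$ to be chosen after $\varepsilon$ and the final parameter $t'$ to be chosen after $\delta$. One extra point that needs a sentence of care, and which is genuinely new compared to Lemma~\ref{lem:face_trunc}, is condition (i): the hypothesis $|q-q_1|>|q-q_2|$ combined with $q_1$ carrying the unstable equilibrium ensures that the fixed endpoint of the cutting segment can be taken on $E_a$ arbitrarily near $q_1$ without ever reaching $q_1$, so that $q_1$ survives as a vertex of $P''$; I would state this explicitly and then refer to the argument of Lemma~\ref{lem:face_trunc} for the remaining, essentially identical, details rather than repeating them.
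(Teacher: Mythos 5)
Your overall strategy is the right one: the paper gives no separate proof of this corollary, and the intended argument is exactly the one-parameter specialization of the proof of Lemma~\ref{lem:face_trunc}, with the Intermediate Value Theorem replacing the two-dimensional Cube Separation Theorem and with only the endpoint $y_b(t)\in E_b$ being tuned. However, your construction has a concrete combinatorial error that makes it prove a different statement. If the cutting plane meets $E_a$ at a \emph{fixed interior point} $y_a$ near $q_1$, then $y_a$ is a new vertex of the truncated polyhedron: you delete the $j-2$ vertices $q_2,\dots,q_{j-1}$ and create $y_a$, $y_b(t)$ and the $j-2$ points $q_i(t)$, for a net change of $v\mapsto v+2$ (and $e\mapsto e+3$), not $v\mapsto v+1$ as you assert in your Euler count. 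That lands you in a polyhedron with $f+1$ faces and $v+2$ vertices, i.e.\ mechanical complexity increased by $4$ rather than $2$, which is not the corollary's conclusion (and would ruin the application to $P_C$ in Section~\ref{ss:monostatic}, where the increment $C(P_D)=C(P_C)+2$ is used).

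The point of replacing the saddle on $E_a$ by an unstable equilibrium \emph{at the vertex} $q_1$ is precisely that the cutting plane $G(t)$ can be taken to pass \emph{through $q_1$ itself}: then no new vertex is created at that end, the count becomes $v\mapsto v+1$, $f\mapsto f+1$, $e\mapsto e+2$, and $q_1$ survives as a vertex of $P''$ with one additional edge. It remains a nondegenerate unstable equilibrium because $P''\subset P$, so the plane through $q_1$ perpendicular to $q_1-c(t)$, which supports $P$ by the choice of $\varepsilon$ (your condition (a)), a fortiori supports $P''$ and meets it only at $q_1$. With that modification, the rest of your argument --- the IVT to find $\bar t$ with the projection of $c(\bar t)$ onto the line of $E_b$ equal to $y_b(\bar t)$, the Lipschitz-controlled nudge to $t'$ so that the shortened $E_b$ and the new edge $Y_b(t')$ both carry saddles and $y_b(t')$ is unstable --- goes through verbatim and yields the corollary. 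Your closing remark about condition (i) guaranteeing that the cut point can approach $q_1$ without reaching it is then beside the point; what (i) still does is guarantee that the trace of $G(t)$ on $F$ (now anchored at $q_1$) is transversal to $E_b$ and can sweep $y_b(t)$ across the whole of $E_b$.
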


\begin{rem}\label{rem:face_trunc}
A simplified version of the proof of Lemma~\ref{lem:face_trunc} can be used to prove the same statement for a \emph{fixed} reference point $c$.
\end{rem}

To prove Theorem~\ref{thm:main1} in Case 3, we construct a simple polyhedron with $U$ vertices that has $S$ stable and $U$ unstable points. Since any polyhedron in class $(S, U)^E$ has at least $U$ vertices, and among polyhedra with $U$ vertices those with a minimum number of faces are the simple ones, such a polyhedron clearly has minimal mechanical complexity in class $(S,U)^E$.

First, consider the case that $S \geq 5$ and $U > 2S-4$.                      
Let $U_0 = 2S-4$. By the construction in Subsection~\ref{subsec:polyhedralpair}, class $(S,U_0)^E$ contains a simple polyhedron $P_0$ with $U_0$ vertices and $S$ faces. Remember that to construct $P_0$ we started with a tetrahedron $T$ in class $(4,4)^E$, and in each step we
truncated a vertex of the polyhedron sufficiently close to this vertex. Throughout the process, the vertex can be chosen as one of those created during the previous step. 
Since in this case the conditions of Lemma~\ref{lem:face_trunc} are satisfied for any face of $P_0$, applying Lemma~\ref{lem:face_trunc}
to it we obtain a polyhedron $P_1$  with two more vertices, one more face, two more unstable and the same number of stable points.
By subsequently applying the same procedure, we can construct a convex polyhedron in class $(S,U)^E$ for every even value of $U$. To obtain a polyhedron in class  $(S, U)^E$ where $U$ is odd, we can modify a polyhedron in class $( S, U-1)^E$ according to Corollary~\ref{cor:vertex_build}.

Now, consider the case that $2 \leq S \leq 4$, and $U \geq 6$. Then, starting with a tetrahedron in class  $( S, 4)^E$ (based on the data of Table~\ref{9classes}, all three tetrahedra meet the conditions of Lemma~\ref{lem:face_trunc}) we can repeat the argument in the previous paragraph. 

\subsubsection{Case 4}  $U \geq 5$ and $S > 2U-4$, or $2 \leq U \leq 4$ and $S \geq 6$.\\

Theorem~\ref{thm:main1} in Case 4 can be deduced from Case 3 using direct geometric properties of polarity. Nevertheless, also the proof in Case 3 via Lemma~\ref{lem:face_trunc} can be dualized as well. In Lemma~\ref{lem:vertex_build} and Corollary~\ref{cor:vertex_build} we prove dual versions of Lemma~\ref{lem:face_trunc} and Corollary~\ref{cor:face_trunc}, respectively, which we are going to use also in Section~\ref{ss:monostatic}, in our investigation of monostatic polyhedra. Since Theorem~\ref{thm:main1} follows from Lemma~\ref{lem:vertex_build} and Corollary~\ref{cor:vertex_build} similarly like in the proof of Case 3, we leave it to the reader.

We start with the proof using polarity.  Considering a tetrahedron $T$ centered at $o$, a straightforward modification of the construction in Lemma~\ref{lem:inductive_step_12} and by Remark~\ref{rem:face_trunc}, we may construct a simple polyhedron $P$ with $U$ vertices that has $S$ stable and $U$ unstable equilibrium points with respect to $o$.
Using small truncations, we may assume that $P$ is arbitrarily close to $T$ measured in Hausdorff distance. Furthermore, without loss of generality, we may assume that a face of $T$, and all vertices of this face have degree $3$ in $P$. Let this face of $T$ be denoted by $F$.

Recall that $P^{\circ}$ denotes the polar of $P$. By Lemma~\ref{lem:simplices}, $c(T^{\circ}) = o$, and by the continuity of polar and the center of mass, $c(P^{\circ})$ is `close' to $o$. On the other hand, since the vertex $q$ of $P^{\circ}$ corresponding to $F$ has degree $3$, and each face containing $q$ is a triangle, Lemma~\ref{lem:Jacobian_face} implies that by a slight modification of $q$ we obtain a polyhedron $Q$ such that $c(Q) = o$, and a face/edge/vertex of $Q$ contains an equilibrium point with respect to $o$ if, and only if the corresponding vertex/edge/face of $P$ contains an equilibrium point with respect to $o$. Thus, $Q$ satisfies the required properties.

As we mentioned, an alternative way to prove Theorem~\ref{thm:main1} in Case 4 is using Lemma~\ref{lem:vertex_build} and Corollary~\ref{cor:vertex_build}.

\begin{lem}\label{lem:vertex_build}
Let $P \in (S,U)^E$ be a convex polyhedron with $f$ faces and $v$ vertices. Let $q_i , i=1...j-1,j,...m (j\geq 3)$ be successive vertices of an $m$-gonal $(m\geq 3)$ face $F$ of $P$ such that 
\begin{itemize}
 \item[i)] $P$ has a stable equilibrium point $c_F$ on $F$, which is contained in the relative interior of the triangle $T = \conv\{q_1, q_{j-1}, q_j \}$;
 \item[ii)] the edge $E = [q_{j-1} q_j]$ contains a saddle-type equilibrium point $c_{E}$; 
 \item[iii)] the vertices $q_i , i = 2, \ldots,j-1$ and $i = j+1, \ldots, m$ are trivalent; 
 \item[iv)] $q_1$, $c_F$ and $c_E$ are not collinear.
\end{itemize}
Then there exists a polyhedron $P' \in (S+2,U)^E$ with $f+2$ faces and $v+1$ vertices. 
\end{lem}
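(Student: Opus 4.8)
The plan is to dualise the construction used to prove Lemma~\ref{lem:face_trunc}: there a small truncation of $P$ by a plane, parametrised by the two points where it meets a pair of edges of the face $F$, produced two new unstable vertices while keeping $S$ fixed; here I would raise a shallow three-faceted ``roof'' with a single new apex $p$ over the triangle $T=\conv\{q_1,q_{j-1},q_j\}\subset F$, producing two new stable faces while keeping $U$ fixed. The trivalence hypothesis (iii), the data $c_F\in\relint T$ in (i) together with the saddle $c_E\in E$ in (ii), and the non-collinearity (iv) play, respectively, the roles that flexibility of vertices, the two saddle-carrying edges, and condition (i) of Lemma~\ref{lem:face_trunc} play there.

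First I would fix the combinatorics. Let $v$ be the outer unit normal of $F$, pick $p_0\in\relint T$ close to $c_F$, put $p=p_0+\varepsilon v$ for a small $\varepsilon>0$, and simultaneously reposition the trivalent vertices $q_2,\ldots,q_{j-1}$ into the plane $\aff\{p,q_1,q_{j-1}\}$ and $q_{j+1},\ldots,q_m$ into the plane $\aff\{p,q_1,q_j\}$; this is legitimate by (iii) and Remark~\ref{rem:smallperturb}, and by Lemma~\ref{lem:magnitude} each such move displaces $c(P)$ by $O(\varepsilon^3)$, while the roof itself displaces it only by $O(\varepsilon)$. Since $p$ lies just outside $\aff F$ and in the interior of every other supporting halfspace of $P$, in the resulting polyhedron $F$ is replaced by exactly the three faces $F_1=\conv\{p,q_1,\ldots,q_{j-1}\}$, $F_2=\conv\{p,q_1,q_m,\ldots,q_j\}$ and $F_0=\conv\{p,q_{j-1},q_j\}$, and the three new edges $[p,q_1]$, $[p,q_{j-1}]$, $[p,q_j]$ appear, so $f\to f+2$, $v\to v+1$, $e\to e+3$. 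By Remark~\ref{rem:smallperturb}, for $\varepsilon$ small enough every equilibrium of $P$ other than the stable one on $F$ is preserved (in particular $E$ still carries a saddle, now between $F_0$ and the essentially unchanged neighbouring face), so it remains only to control the equilibria on $F_0,F_1,F_2$, on the three new edges, and at $p$.

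The core of the argument would be a two-parameter fixed-point step, exactly parallel to the use of the Cube Separation Theorem of \cite{Postnikov} in Lemma~\ref{lem:face_trunc}. I would let $p_0=p_0(s,t)$, $(s,t)\in[0,1]^2$, vary over a small coordinate box around $c_F$ whose axes are chosen, using (iv), so that $s$ governs the signed position, relative to $[p,q_1]$, of the orthogonal projection of $c(P(s,t))$ onto $\aff F_1$, and $t$ does the same for $\aff F_2$. The $O(\varepsilon^3)$-Lipschitz dependence of $c$ from Lemma~\ref{lem:magnitude} makes the sets on which these projections land in the relative interior of, on, or outside the respective halves of $[p,q_1]$ satisfy the hypotheses of the Cube Separation Theorem, which produces a parameter $(\bar s,\bar t)$ at which both projections lie on $[p,q_1]$ --- equivalently, at which $c(P(\bar s,\bar t))$ lies on the line through $p$ and $q_1$. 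Perturbing to a nearby $(s',t')$, pushing $p_0$ slightly towards the edge $[q_{j-1},q_j]$ --- a direction along which, thanks to (iv), the two projections move off $[p,q_1]$ into $\relint F_1$ and $\relint F_2$, as in the closing paragraph of the proof of Lemma~\ref{lem:face_trunc} --- also makes the edges $[p,q_{j-1}]$ and $[p,q_j]$ carry saddle points.

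Finally I would check the two ``dual'' facts that keep $U$ fixed. At $(\bar s,\bar t)$ the vector $p-c$ points along the near-horizontal edge $[p,q_1]$, hence makes an angle close to $\tfrac{\pi}{2}$ with $v$, while the normal cone of $P(\bar s,\bar t)$ at $p$ is a thin cone about $v$; so $p$ is not an unstable equilibrium, and this persists under the small perturbation. This is the dual of the fact that the cut face in Lemma~\ref{lem:face_trunc} carries no stable point. Similarly, the orthogonal projection of $c$ onto $\aff F_0$ falls on the ray from $p$ pointing away from $[q_{j-1},q_j]$, hence outside $F_0$, so $F_0$ carries no stable point. Hence $P'=P(s',t')$ lies in $(S+2,U)^E$ and has $f+2$ faces and $v+1$ vertices. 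The hard part is precisely this fixed-point step: arranging the two parameters so that the Cube Separation Theorem applies and then perturbing, using (iv), in a direction that simultaneously makes all the relevant new equilibria nondegenerate without creating an unstable vertex at $p$; the combinatorial bookkeeping for general $m$ and $j$ is routine once the trivalent vertices have been repositioned.
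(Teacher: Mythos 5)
Your overall strategy---erect a shallow pyramid over $T$, preserve the old equilibria via Remark~\ref{rem:smallperturb}, and pin down the new ones with a two-parameter family, the Cube Separation Theorem and a final small perturbation---is exactly the paper's, but the concrete geometry is wrong in ways that break the argument. First, you place the apex at $p=p_0+\varepsilon v$ with $p_0$ near $c_F$, i.e.\ almost directly above the orthogonal projection of the centroid onto $F$. Then $p-c$ makes an angle of order $|p_0-c_F|/d$ with $v$ (where $d$ is the depth of the centroid below $F$), while the normal cone of the new polyhedron at $p$ has angular radius of order $\varepsilon$ about $v$; so for $p_0$ close to $c_F$ the apex \emph{is} an unstable vertex, which is precisely what must be avoided to keep $U$ fixed. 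Your justification that it is not---``$p-c$ points along the near-horizontal edge $[p,q_1]$''---presupposes that the centroid lies on $\aff\{p,q_1\}$, a line lying essentially in the support plane $\aff F$; that is impossible. The paper instead places the apex near the point $x$ where the line through $q_1$ and $c_F$ meets $E$ (this is where hypothesis (iv) is used), so that the apex-to-centroid direction has a horizontal component bounded away from zero, the apex is manifestly not unstable, and yet the ridge $[q_1,q]$ still passes over $c_F$.

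Second, your fixed-point target is unattainable: asking that the orthogonal projections of $c$ onto \emph{both} planes $\aff F_1$ and $\aff F_2$ lie on their common edge line $\aff\{p,q_1\}$ forces $c\in\aff\{p,q_1\}$, since the two planes through that line orthogonal to $\aff F_1$ and to $\aff F_2$ respectively meet only in the line itself; hence your two separating sets have empty intersection. The paper's two conditions are genuinely independent: one balances the stable point across the ridge $[q_1,q]$ so that \emph{both} large faces receive a stable point, while the other drives the projection of $c$ onto the plane of the small face $\conv\{q_{j-1},q_j,q\}$ onto the line of $E$, after which a final tilt of size $\gamma$ (moving $c$ by only $O(\gamma^2)$) pushes that projection into the interior of the small face. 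Third, and decisively, your final configuration carries stable points only on $F_1$ and $F_2$ and, as you state explicitly, none on $F_0$; since the stable point of $F$ is destroyed, this yields $S+1$ stable equilibria rather than $S+2$, and your claimed inventory (two new stable points, two new saddles, no new unstable point) violates the Poincar\'e--Hopf count $\Delta S+\Delta U=\Delta H$. All three new faces must carry stable equilibria; producing the third one on the small face next to $E$ is exactly what the second parameter and the final perturbation are for, and is the step your argument omits.
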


\begin{figure}[ht]
\begin{center}
\includegraphics[width=0.9\textwidth]{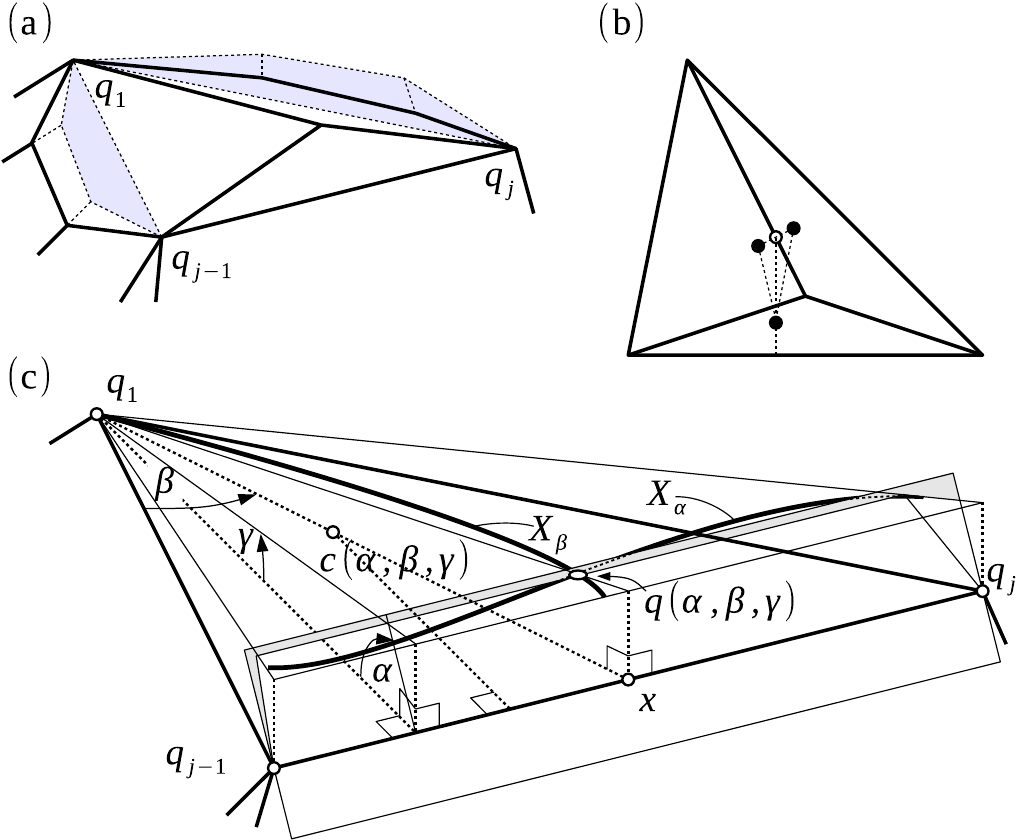}
\caption{Increasing the number of stable equilibria by two. Schematic view of the pyramid with three light faces instead of the original dark one denoted as $F$ (a); view perpendicular to face $F$: full circles mean stable equilibrium points, the empty circle is the projection of $c(\alpha,\beta,\gamma)$ onto $F$ (b); Illustration for the application of the Cube Separation Theorem for compact sets $X_\alpha$ and $X_\beta$ (c).}
\label{fig:splus2}
\end{center}
\end{figure}

\begin{proof}
In the proof, we show that for a sufficiently small pyramid erected over the triangle $T = \conv \{ q_1, q_{j-1}, q_j\}$ (which is contained by $F$ and carries a stable equilibrium point) followed by a truncation of $P$ by the plane of the three new faces of the pyramid, results in three new faces instead of $F$ all carrying stable equilibrium and two new edges both carrying saddle-type equilibrium, see Fig.~\ref{fig:splus2}a. 

Let the intersection point of the line through $q_1$ and $c_F$ with $E$ be denoted by $x$. We choose the apex $q$ of the pyramid from a fixed, sufficiently small neighborhood $V$ of $x$. Let $U$ be the set of the centers of mass of the modified convex polyhedra, which we denote by $P(q)$. 
We choose $V$ in such a way that, apart from the three new faces and edges, and the new vertex, $P(q)$ and $P$ have equilibrium points on the same faces and edges, and at the same vertices. Furthermore, we choose $V$ such that for all $q \in V$, the face structure of the resulting polyhedron $P(q)$ is the one described in the previous paragraph, and for any $y \in U$, the Euclidean distance function from $y$ on $[q_{j-1},q] \cup [q,q_j]$ has a unique local minimum, and this point is different from $q$, for all $q \in V$. Note that the latter condition implies that the new vertex $q$ is \emph{not} an unstable equilibrium point.
Thus, we need to prove only that, with a suitable choice of $q$, all the three new faces contain a new stable equilibrium point.

We parametrize $q$ using the following parameters:
\begin{itemize}
\item the angle $\alpha$ of the plane of $\conv \{ q_{j-1},q_j,q \}$ and the plane of $F$. Here we assume that $0 \leq \alpha \leq \alpha_0$, where the sum of $\alpha_0$ and the dihedral angle of $P$ at $E$ is $\pi$.
\item the angle $\beta$ between two rays, both starting at $q_1$, and containing $q_{j-1}$ and the orthogonal projection $q_F$ of $q$ onto the plane of $F$, respectively. Here we set $\beta_1 \leq \beta \leq \beta_2$, where $[\beta_1,\beta_2]$ is a sufficiently small interval containing the angle $\angle q_{j-1} q_1 c_F$.
\item the angle $\gamma$ between the ray starting at $q_1$ and containing $q$, and the plane of $F$. Here we assume that $0 < \gamma < \gamma_0$ for some small, fixed value $\gamma_0$.
\end{itemize}
We choose the values of $\beta_1, \beta_2, \gamma_0$ such that in the permitted range of the parameters, $q \in V$.
For brevity, we may refer to $P(q(\alpha,\beta,\gamma))$ as $P(\alpha,\beta,\gamma)$, $c(P(\alpha,\beta,\gamma))$ as $c(\alpha,\beta,\gamma)$ and observe that these three quantities determine $q$.

Note that, using the idea of the proof of Lemma~\ref{lem:magnitude}, we have that $|c(P(q))-c(P)| = O(\gamma)$, and for some constant $C > 0$ independent of $\alpha,\beta,\gamma$, if $|\alpha'-\alpha| \leq \gamma$, then $|c(\alpha',\beta,\gamma)-c(\alpha,\beta,\gamma)| \leq C \gamma^2$.

Fix some $\gamma > 0$, and let $X_{\alpha}$ be the set of pairs $(\alpha,\beta) \in [0,\alpha_0] \times [\beta_1,\beta_2]$ such that the planes through $E$, and containing $c(\alpha,\beta,\gamma)$ and $q(\alpha,\beta,\gamma)$, respectively, are perpendicular.
Furthermore, let $X_{\beta}$ be the set of pairs $(\alpha,\beta) \in [0,\alpha_0] \times [\beta_1,\beta_2]$ such that $q_1$, and the projections of $c(\alpha,\beta,\gamma)$ and $q(\alpha,\beta,\gamma)$ onto the plane of $F$ are collinear. If $\gamma > 0$ is sufficiently small, the property $|c(P(q))-c(P)| = O(\gamma)$ implies that $X_{\alpha}$ strictly separates the sets $\{ (0,\beta) : \beta \in [\beta_1,\beta_2]\}$ and $\{ (\alpha_0,\beta) : \beta \in [\beta_1,\beta_2]\}$, and $X_{\beta}$ strictly separates the sets $\{ (\alpha,\beta_1) : \alpha \in [0,\alpha_0]\}$ and $\{ (\alpha,\beta_2) : \alpha \in [0,\alpha_0]\}$. Since $X_{\alpha}$ and $X_{\beta}$ are compact, we may apply the Cube Separation Theorem \cite{Postnikov} as in the proof of Lemma~\ref{lem:face_trunc}. From this, it follows that there is some $(\alpha_{\gamma},\beta_{\gamma}) \in X_{\alpha} \cap X_{\beta}$.

It is easy to see that $(\alpha_{\gamma},\beta_{\gamma}) \in X_{\alpha}$ implies that for sufficiently small values $\gamma$, $P(\alpha_{\gamma},\beta_{\gamma},\gamma)$ has stable equilibrium points on both faces containing the new edge $[q_1,q(\alpha_{\gamma},\beta_{\gamma},\gamma)]$.
Furthermore, the orthogonal projection of $c(\alpha_{\gamma},\beta_{\gamma},\gamma)$ onto the plane containing $E$ and $q=q(\alpha_{\gamma},\beta_{\gamma},\gamma)$ lies on $E$. Now, let us replace $\alpha_{\gamma}$ by $\alpha' = \alpha_{\gamma} - \gamma$. Then, since in this case $|c(\alpha',\beta_{\gamma},\gamma)-c(\alpha_{\gamma},\beta_{\gamma},\gamma)| \leq C \gamma^2$, we have that if $\gamma$ is sufficiently small, then the orthogonal projection of $c(\alpha',\beta_{\gamma},\gamma)$ onto the face $\conv \{ q_{j-1},q_j,q(\alpha',\beta_{\gamma},\gamma)\}$ lies inside the face; that is, $P$ has a stable equilibrium point on this face. This yields the assertion.
\end{proof}

\begin{cor}\label{cor:vertex_build}
If all conditions (i)--(iv) of Lemma~\ref{lem:vertex_build} hold, then there is a polyhedron $P'' \in (S+1,U)^E$ with $f+2$ faces and $v+1$ vertices.
\end{cor}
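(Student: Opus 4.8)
The plan is to adapt the proof of Lemma~\ref{lem:vertex_build} with only a minor modification, mirroring the relationship between Lemma~\ref{lem:face_trunc} and Corollary~\ref{cor:face_trunc}. Recall that in Lemma~\ref{lem:vertex_build} we erect a small pyramid over the triangle $T = \conv\{q_1,q_{j-1},q_j\}$ and then truncate $P$ by the plane of the three new faces of the pyramid, obtaining three new faces replacing $F$, each carrying a stable equilibrium point, and two new saddle-type edges; the apex vertex $q$ was arranged (via the choice of $V$) \emph{not} to be an unstable equilibrium point. For the present corollary I would keep essentially the same two-step construction (pyramid plus truncation) but now arrange the apex $q$ \emph{to be} an unstable equilibrium point. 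This changes the count: we still gain $2$ faces (three new faces in place of $F$) and $1$ vertex (the new apex $q$), so the $f$ and $v$ bookkeeping is exactly as claimed, $f+2$ faces and $v+1$ vertices; but now one extra unstable equilibrium appears at $q$, so the class becomes $(S+2,U+1)^E$ --- wait, the statement claims $(S+1,U)^E$, so in fact the intended modification must drop one of the new stable equilibria rather than add an unstable one.

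Reconsidering, the correct reading is that Corollary~\ref{cor:vertex_build} should be obtained by \emph{not} performing the final truncation, or by collapsing two of the three new faces: instead of replacing $F$ by three faces we replace it by two faces sharing the new edge $[q_1,q]$, each carrying a stable equilibrium, together with the apex vertex $q$ which is \emph{not} an unstable equilibrium. This yields $S+1$ stable faces (two new ones in place of one old one, net $+1$), the saddle count and vertex count adjusting accordingly via (\ref{Euler}) and (\ref{Poincare}) to give $f+2$ faces, $v+1$ vertices, and $U$ unstable points. Concretely: erect the small pyramid over $T$ using only the parameters $\beta$ and $\gamma$ (fixing the face $\conv\{q_{j-1},q_j,q\}$ to be coplanar with $F$, i.e. taking $\alpha=0$ in the notation of the lemma, so that it is not actually a new face), so that only \emph{two} genuinely new faces are created; then the single one-parameter Cube Separation (Brouwer) argument --- now in dimension $1$, i.e. the Intermediate Value Theorem applied to the set $X_\beta$ --- suffices to place $c(P(q))$ so that $q_1$, the projection of $c$, and the projection of $q$ are collinear, forcing both new faces to carry stable equilibria.

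The key steps, in order, are: (1) set up the same family of modified polyhedra $P(q)$ as in Lemma~\ref{lem:vertex_build}, but with the reduced parameter set so that $F$ is replaced by exactly two triangular faces meeting along $[q_1,q]$ and the old edge $E=[q_{j-1},q_j]$ is retained; (2) invoke Remark~\ref{rem:smallperturb} to ensure that, for $q$ in a sufficiently small neighbourhood $V$ of the point $x = \aff(\{q_1,c_F\})\cap E$, all equilibria of $P$ away from $F$ are preserved, the face structure is as desired, and $q$ is \emph{not} an unstable vertex (using that the distance function from nearby reference points has its local minimum on $[q_{j-1},q]\cup[q,q_j]$ away from $q$); (3) apply the one-dimensional Intermediate Value Theorem to the continuous function measuring the signed discrepancy in the collinearity condition as $\beta$ ranges over $[\beta_1,\beta_2]$, using the estimate $|c(P(q))-c(P)|=O(\gamma)$ to guarantee a sign change at the endpoints; (4) conclude that at the resulting parameter value both new faces carry stable equilibria, hence $P''\in(S+1,U)^E$, and check the $f$- and $v$-counts directly.

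The main obstacle --- though it is really a routine adaptation rather than a new difficulty --- is verifying that the apex $q$ can genuinely be kept off the list of unstable equilibria while simultaneously achieving the collinearity condition that makes both new faces stable; this is where one must be careful that the neighbourhood $V$ and the parameter ranges $[\beta_1,\beta_2]$, $(0,\gamma_0)$ are chosen compatibly, exactly as in the proof of Lemma~\ref{lem:vertex_build}, and that condition (iv) (non-collinearity of $q_1$, $c_F$, $c_E$) is what prevents the construction from degenerating. Since all of these points were already handled in the lemma, the corollary follows by the same argument with $\alpha$ frozen at $0$, and we omit the repetitive details.
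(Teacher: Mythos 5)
The overall strategy you converge on is the right one, and it is surely what the authors intend (they state the corollary without proof): run the construction of Lemma~\ref{lem:vertex_build} but impose only the one\-/parameter condition $X_\beta$ via the Intermediate Value Theorem, so that only the two new faces meeting along $[q_1,q]$ acquire stable points, giving a net gain of one stable face and one saddle edge. Your bookkeeping ($f+2$ faces, $v+1$ vertices, $\Delta S=+1$, $\Delta U=0$, hence $\Delta H=+1$ by (\ref{Poincare}) and (\ref{Euler})) is also consistent. However, the concrete geometric realization you propose --- freezing $\alpha=0$ so that $\conv\{q_{j-1},q_j,q\}$ is coplanar with $F$ and ``is not actually a new face'' --- does not work. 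In the lemma's parametrization, $\alpha$ is the angle between $\aff\{q_{j-1},q_j,q\}$ and $\aff F$; since $q_{j-1},q_j\in\aff F$, the condition $\alpha=0$ forces $q\in\aff F$, which contradicts $\gamma>0$ (the apex must lie strictly outside $P$, above the plane of $F$, for any pyramid to be erected). In that degenerate position the two ``lateral'' planes through $q_1,q_{j-1},q$ and $q_1,q_j,q$ both coincide with $\aff F$, so no truncation occurs, no edge $[q_1,q]$ and no pair of faces meeting along it are created; one merely obtains $\conv(P\cup\{q\})$ with a single enlarged coplanar face, i.e.\ $f$ faces rather than $f+2$, and the edge $E$ (whose retained saddle you rely on) is destroyed rather than ``retained.'' So the construction as written produces neither the claimed combinatorics nor the claimed equilibria.

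The repair is to keep $\alpha$ strictly inside $(0,\alpha_0)$ but on a definite side of the separating set $X_\alpha$ from the lemma's proof: then the pyramid is genuinely three\-/dimensional, all three new faces and all three new edges (including $[q_1,q]$) exist, the one\-/dimensional IVT in $\beta$ (your step (3)) places both faces adjacent to $[q_1,q]$ in equilibrium, while the choice of the side of $X_\alpha$ guarantees that the third face $\conv\{q_{j-1},q_j,q\}$ does \emph{not} carry a stable point and that the saddle on $E$ survives (here the Lipschitz estimate $|c(\alpha',\beta,\gamma)-c(\alpha,\beta,\gamma)|\le C\gamma^2$ from the lemma is what lets you stay off $X_\alpha$ uniformly). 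This last verification --- that the third face can be made non\-/stable without losing the saddle on $E$ or creating an unstable point at $q$ --- is the one substantive point your write\-/up omits; everything else (the preservation of distant equilibria via Remark~\ref{rem:smallperturb}, the non\-/equilibrium of $q$, the role of condition (iv)) is handled as in the lemma. The false start in your first paragraph (making $q$ unstable, which would change $U$) should simply be deleted.
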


\section{Monostatic polyhedra: proof of Theorem \ref{thm:SUbounds}}\label{ss:monostatic}

Our theory of mechanical complexity highlights the special role of polyhedra in the first row and first column of the $(S,U)$ grid. These objects have either only one stable equilibrium point (first row) or just one unstable equilibrium point (first column) and therefore they are called collectively \emph{monostatic}. In particular, the first row is sometimes referred to as \emph{mono-stable} and the first column as \emph{mono-unstable}. Our theory provided only a rough lower bound for their mechanical complexity. While no general upper bound is known, individual constructions provide upper bounds for some particular classes; based on these values one might think that the mechanical complexity of these classes, in particular when both $S$ and $U$ are relatively low, is very high. Monostatic objects have peculiar properties, apparently
the overall shape in these equilibrium classes is constrained. In \cite{VarkonyiDomokos} the thinness $T$ and the flatness $F$ of convex bodies is defined $(1 \leq T,F \leq \infty)$ and it is shown that, for nondegenerate convex bodies, $T=1$ if and only if $U=1$ and $F=1$ if and only if $S=1$. This  constrained overall geometry may partly account for the high mechanical complexity of monostatic polyhedra. 

\subsection{Known examples}
The first (and probably best) known such object is the monostatic polyhedron $P_C$ constructed by Conway and Guy in 1969 \cite{Conway}(cf. Figure \ref{fig:mono_2}) having mechanical complexity $C(P_C)=96$. Recently, there have been two additions: the polyhedron $P_B$ by Bezdek \cite{Bezdek} (cf. Figure \ref{fig:mono_3})
and the polyhedron $P_R$ by Reshetov \cite{Reshetov} with respective mechanical complexities $C(P_B)=64$ and $C(P_R)=70$.
It is apparent that all of these authors were primarily interested in minimizing the number of faces on the condition that there is only one stable equilibrium, so, if one seeks minimal complexity in any of these classes it is possible that these constructions could be improved. Also, as we show below, the same ideas can be used to construct examples of mono-unstable polyhedra.
The construction in \cite{Conway} relies on a delicate calculation for a  certain discretized planar spiral, defining a planar polygon $P$, serving as the basis of a prism which is truncated in an oblique manner (cf. Figure \ref{fig:mono_2}). The spiral consists of $2m$ similar right triangles, each having an angle $\beta = \pi/m$ at the point $o$. The cathetus of the smallest pair of triangles has length $r_0$, and this will be the vertical height of $o$ when the solid stands in stable equilibrium.

\begin{figure}[ht]
\begin{center}
\includegraphics[width=0.9\textwidth]{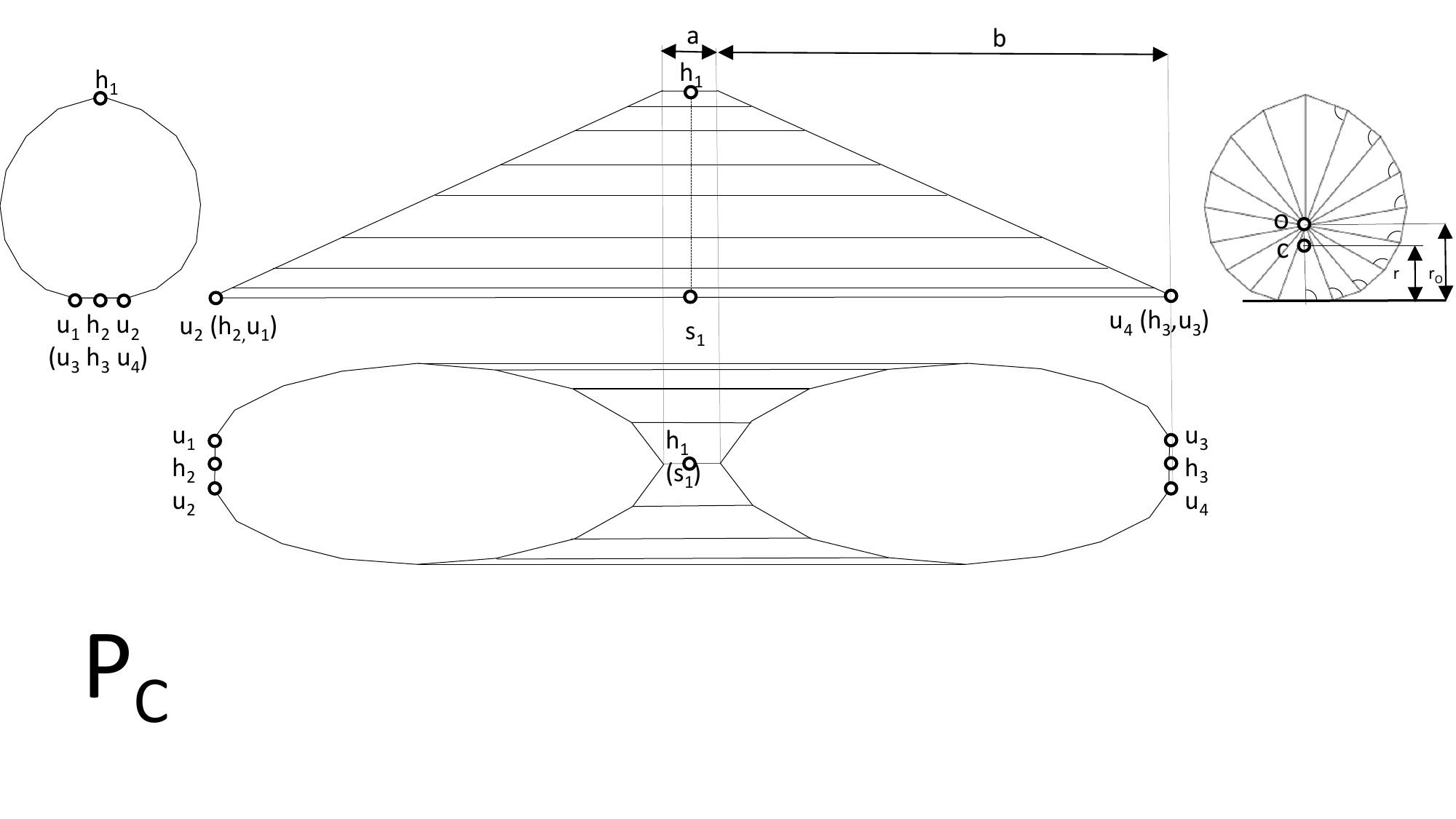}
\caption{Schematic view of the monostatic polyhedron $P_C \in (1,4)^E, (19,34)^C$  constructed by Conway and Guy in 1969 \cite{Conway}. Stable, unstable and saddle-type equilibria are marked with  $s_i, u_j,h_k$, $i=1, j=1,2,3,4, k=1,2,3$, respectively. Complexity can be computed as $C(P_C)= 2(19+34-1-4)=96$}
\label{fig:mono_2}
\end{center}
\end{figure}

\begin{figure}[ht]
\begin{center}
\includegraphics[width=0.9\textwidth]{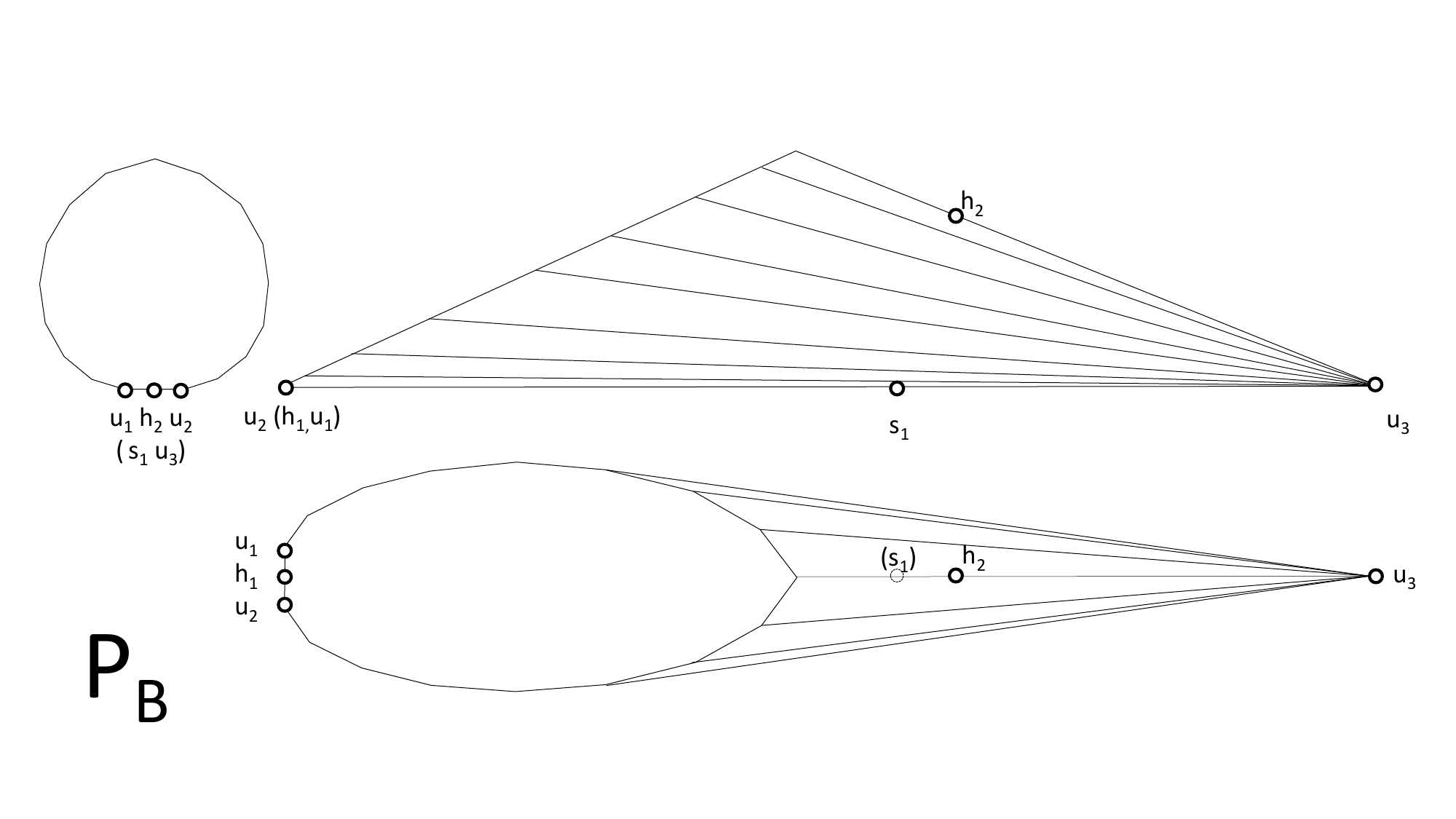}
\caption{Schematic view of the monostatic polyhedron $P_B \in  (1,3)^E, (18,18)^C$ constructed by Bezdek in 2011 \cite{Bezdek}. Stable, unstable and saddle-type equilibria are marked with  $s_i, u_j,h_k$, $i=1, j=1,2,3 k=1,2$, respectively. Complexity can be computed as 
$C(P_B)= 2(18+18-1-3)=64$}
\label{fig:mono_3}
\end{center}
\end{figure}

 We denote the height of the center of mass  $c$ by $r$ in the same configuration.
It is evident from the construction that if $P$ is a homogeneous planar disc then we have $r>r_0$ since such a disc cannot be monostatic \cite{DomokosRuina}. However, it is also clear that for a non-uniform mass distribution resulting
in $r<r_0$, $P$ would be monostatic (cf. Figure \ref{fig:mono_2}). In the construction of Conway and Guy
we can regard $r$ as a function $r(a,b)$ of the geometric parameters $a,b$ (cf. Figure \ref{fig:mono_2}).
Apparently, $r(0,b)=r_1$ and $r(a,0)=r_2$ are constants. If $P$ is the aforementioned homogeneous disc then we have  $r=r_2>r_0$. Next we state
a corollary to the main result of \cite{Conway}:
\begin{cor}\label{conway}
If $m \geq 9$ then $r_1<r_0$.
\end{cor}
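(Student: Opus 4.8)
The plan is to derive $r_1<r_0$ from the monostaticity of the Conway--Guy solid (the main result of \cite{Conway}), combined with a monotonicity principle linking the height of the center of mass to the number of stable equilibria.

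First I would isolate the following dichotomy, implicit in the spiral computation of \cite{Conway} and, in the planar limit, in \cite{DomokosRuina}: for the truncated prism $P$ erected over the spiral polygon, the ``smallest'' base face is always a stable equilibrium, and some \emph{other} face also carries a stable equilibrium precisely when the center of mass sits at height $r\ge r_0$ in that configuration. The reason is that the $2m$ congruent right triangles with apex angle $\beta=\pi/m$ are arranged so that, passing from one boundary edge of the polygon to the consecutive one, the foot of the vertical dropped from height $r_0$ advances in a fixed rotational sense, the single exception being at the smallest pair of triangles; hence if $r\ge r_0$ the vertical from the center of mass still lands on a neighbouring face, whereas if $r<r_0$ it lands only on the designated face. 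Consequently $P$ is monostatic if and only if $r<r_0$, and in particular monostaticity \emph{forces} $r<r_0$.

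Next I would quote the theorem of \cite{Conway}: for $m\ge 9$ the construction can be completed to a genuinely monostatic convex polyhedron, and this is done along the slice of the parameter square on which $r$ attains its minimum, namely $a=0$, where by hypothesis $r(0,b)=r_1$. Thus the monostatic solid produced by Conway and Guy has center-of-mass height exactly $r=r_1$, and the dichotomy of the previous paragraph gives $r_1=r<r_0$. Less precisely, one can phrase the last step by continuity: $r(a,b)$ is continuous on the parameter square, $r(a,0)\equiv r_2>r_0$ since a homogeneous planar disc is never monostatic \cite{DomokosRuina}, and \cite{Conway} exhibits parameters with $r<r_0$; as the edge value $r(0,b)\equiv r_1$ is the one that is allowed to lie below $r_0$, it must be the one witnessed, so $r_1<r_0$.

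The main obstacle is the first step: extracting from \cite{Conway} the sharp equivalence ``$P$ monostatic $\iff r<r_0$'' and, concretely, checking that when $r\ge r_0$ the face adjacent (in spiral order) to the designated base really does support a second stable equilibrium. This is the point at which the bound $m\ge 9$ becomes essential: for smaller $m$ the spiral triangles are too ``flat'', the advance of the vertical foot is too slow, and several faces remain stable however low one places the center of mass, so the minimal value $r_1$ cannot be pushed below $r_0$. Once this quantitative input from \cite{Conway} is granted, the corollary is immediate.
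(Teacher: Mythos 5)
The paper gives no written proof of this corollary: it is presented as a direct extraction from the Conway--Guy computation, namely that their limiting solid (the slice $a=0$ of the parameter family, whose centroidal height is by definition $r_1$) is monostatic for $m\ge 9$, combined with the observation that monostaticity of this solid forces $r<r_0$. The main chain in your first two paragraphs reconstructs exactly this intended reading, so your route is essentially the paper's. Moreover, the direction of your ``dichotomy'' that you actually need is cleaner than you suggest: by the mirror symmetry of the construction the center of mass lies on the symmetry plane, and since each spiral edge $[p_k,p_{k+1}]$ meets the radius $[o,p_k]$ at a right angle, the foot of the perpendicular dropped from the center of mass onto the line of that edge is displaced from $p_k$ by a vector proportional to $r_0-r$; hence it leaves the edge (beyond $p_k$, in the direction of the designated face) precisely when $r<r_0$. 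So for this family ``monostatic'' and ``$r<r_0$'' are equivalent \emph{for every} $m$.

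That last clause is where your closing paragraph goes wrong, and it is the one point worth correcting. You attribute the hypothesis $m\ge 9$ to the stability analysis (``for smaller $m$ \ldots several faces remain stable however low one places the center of mass, so the minimal value $r_1$ cannot be pushed below $r_0$''). This is backwards: it contradicts the computation just described and also the paper's own assertion, made with no restriction on $m$, that any mass distribution with $r<r_0$ yields a monostatic body. The role of $m\ge 9$ is not to make the tipping conditions work but to make $r<r_0$ \emph{achievable by a homogeneous solid}: for small $m$ the spiral is too coarse and the centroid of even the most extreme oblique truncation ($a=0$) stays at height $\ge r_0$. In other words, $r_1<r_0$ \emph{is} the content of the ``delicate calculation'' of \cite{Conway}, not a corollary of a separate stability argument. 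Relatedly, your ``continuity'' rephrasing of the last step is not an argument: knowing that some $(a,b)$ gives $r(a,b)<r_0$ while $r(a,0)=r_2>r_0$ does not place the witness on the edge $a=0$ unless you also establish that $r$ is monotone in $a$, or that $r(a,b)$ always lies between $r_1$ and $r_2$; if you want to use that phrasing, you must prove such an interpolation property rather than assert that $r_1$ ``must be the one witnessed.''
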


\subsection{Examples in $(3,1)^E$ and $(2,1)^E$}

Consider a Conway construction with $ b = 0$ and denote its vertical  centroidal coordinate by $r_3$: it equals the centroidal coordinate of a plane polygon depicted on the right of Fig.~\ref{fig:mono_2}. Now erect a mirror-symmetric pyramid over the polygon with its apex close to the bottom edge: the vertical coordinate of the body centre of the pyramid will then be close to $3r_3/4$. It can be shown that for a sufficiently flat pyramid (we call it $P_3$) will be in classes $(3,1)^E$ and $(18,18)^C$. Introducing a small asymmetry to $P_3$ by moving the apex off the symmetry plane, a polyhedron $P_2$ is obtained which belongs to classes $(2,1)^E$ and $(18,18)^C$.    

These 'mono-unstable' polyhedra are illustrated in Figure \ref{fig:mono_4}.
An overview of the discussed monostatic polyhedra is shown in Figure \ref{fig:mono_1} on an overlay of the  $(f,v)^C$ and $(S,U)^E$ grids.

\begin{figure}[ht]
\begin{center}
\includegraphics[width=0.9\textwidth]{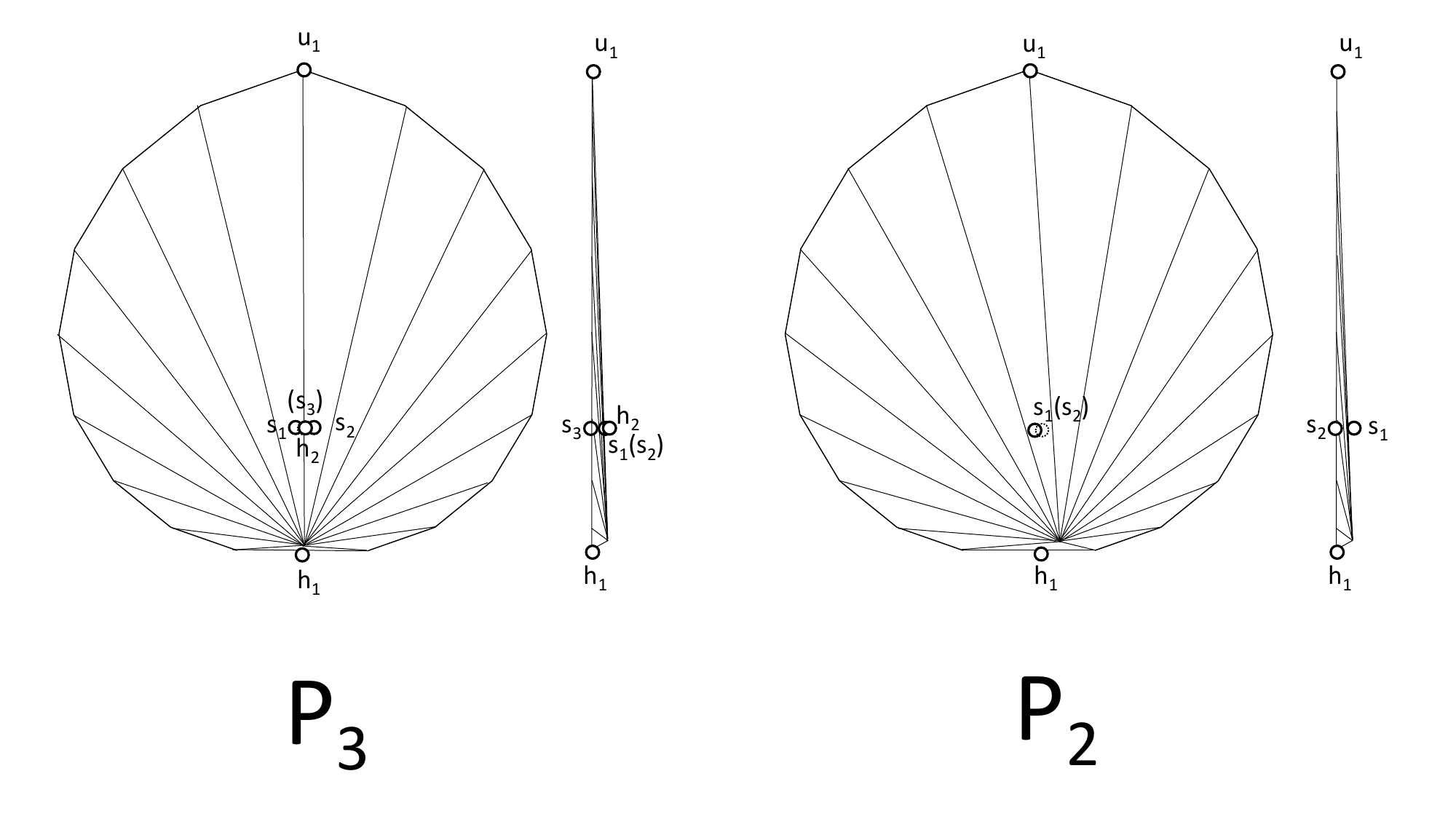}
\caption{Schematic view of two polyhedra $P_3 \in (3,1)^E, (18,18)^C$ and $P_2 \in  (2,1)^E, (18,18)^C$, obtained by using the ideas of the Conway and Bezdek constructions. Stable, unstable and saddle-type equilibria are marked with  $s_i, u_j,h_k$. In case of $P_3$ we have $i=1,2,3, j=1, k=1,2$ and in case of $P_2$ we have $i=1,2, j=1, k=1$. Complexity can be computed as $ C(P_3)=2( 18+18-3-1)=64, C(P_2)= 2(18+18-2-1)=66 $
}
\label{fig:mono_4}
\end{center}
\end{figure}

\subsection{Proof of Theorem \ref{thm:SUbounds}}

\begin{proof}
Consider the case $C(1,U)$ first. The polyhedron $P_C$ has a narrow rectangular face with a stable point and two saddle points on opposite short edges of the same face. They do not satisfiy condition (i) of Lemma~\ref{lem:face_trunc} because of being collinear, but  both 17-gonal faces of $P_C$ can slightly be rotated to get $P'_C$ according to Remark~\ref{rem:smallperturb} in a way that no equilibrium points appear or disappear but the two edges with saddle points become nonparallel, and thus Lemma~\ref{lem:face_trunc} turns to be applicable. 

Since the same face of $P_C$ contains four unstable points as well (and none of them is collinear with the stable and any saddle point), Corollary~\ref{cor:face_trunc} can directly be applied to get $P_D$ with $C(P_D)= C(P_C)+2 = 98$. It means that $C(1,4) \leq 2R(1,4)+90$ and $C(1,5) \leq 2R(1,5)+90$. Applying now Lemma~\ref{lem:face_trunc} on both $P_C$ and $P_D$ successively, the assertion readily follows.
Note that $P_B$ could not be used as departure instead of $P_C$, since its saddle points are not on edges of the same face.

A similar path is taken for the case $C(S,1)$. Depart now $P_3$ with $C(P_3) = 64$: that polyhedron has a 17-gonal face with a stable equilibrium and there is a vertex and an edge on its perimeter having an unstable and a saddle point, respectively. Now it is possible again to slightly rotate the plane of the symmetric triangular face about an axis which is perpendicular to the 17-gon and runs through the apex of the pyramid, making the stable ($s_3$) and saddle ($h_1$) point to move off the symmetry axis of the $17$-gon, so that they become non-collinear with $u_1$ (Remark~\ref{rem:smallperturb} guarantees that it can always be done without changing the number of equilibrium points of any kind). Applying or not Corollary~\ref{cor:vertex_build} first then Lemma~\ref{lem:vertex_build} successively gives $C(S,1) \leq S+ 61$ and $C(S,1) \leq S+62$ for odd and even $S$, respectively, which is equivalent to the second statement of the theorem.
\end{proof}

\subsection{G\"omb\"ocedron prize}
While the construction of monostatic polyhedra with less than 34 edges appears to be challenging  (cf. Figure \ref{fig:mono_1}), the only case which has been excluded is the tetrahedron with $e=6$ edges.

\begin{figure}[ht]
\begin{center}
\includegraphics[width=0.85\textwidth]{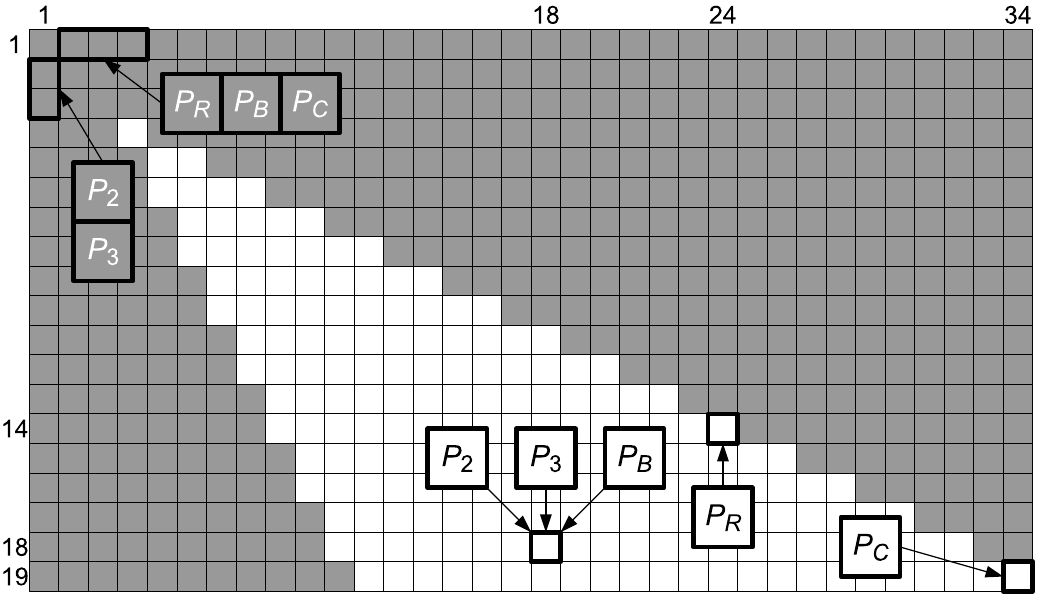}
\caption{Polyhedra with a single stable or unstable equilibrium point. 
The grid shown is an overlay of the  $(f,v)$ and the $(S,U)$ grids. White squares correspond to polyhedral pairs. Location of monostatic polyhedra is shown with black capital letters on the $(f,v)$ grid and white capital letters 
on the  $(S,U)$ grid. Abbreviations:
$P_C$: Conway and Guy, 1969 \cite{Conway}, $P_B$: Bezdek, 2011 \cite{Bezdek}, $P_R$: Reshetov, 2014 \cite{Reshetov}. $P_2,P_3$: current paper, Figure \ref{fig:mono_4}.
Complexity for these polyhedra can be readily
computed as $C(P_C)=96, C(P_B)=64, C(P_R)=70, C(P_3)=64, C(P_2)=66$.}
\label{fig:mono_1}
\end{center}
\end{figure} It also appears to be very likely that G\"omb\"oc-like polyhedra in class  $(1,1)^E$ do exist, however, based on this chart and the previous results, one would expect polyhedra with high mechanical complexity. To further motivate this research we offer a prize for establishing the mechanical complexity $C(1,1)$, the amount $p$ of the prize is given in US dollars as
\begin{equation}
p=\frac{10^6}{C(1,1)}.
\end{equation}

\section{Generalizations and applications}\label{sec:conclusions}

\subsection{Complexity of secondary equilibrium classes}

A special case of Theorem \ref{thm:main1} states that for any polyhedral pair $(f,v)$
 one can construct a homogeneous polyhedron $P$ with $f$ faces and $v$ vertices
in such a manner that $C(P)=0$. In other words, in any primary combinatorial class there exist polyhedra with zero
complexity.
A natural generalization of this statement is to ask whether this is also true for any \emph{secondary} combinatorial
class of convex polyhedra. While we do not have this result,  we present an affirmative statement for
the inhomogeneous case:

\begin{prop}\label{prop:Koebe}
Let $P$ be a Koebe polyhedron, i.e. a convex polyhedron midscribed (edge-circumscribed) about the unit sphere $\S^2$
with center $o$.  Then every face, edge and vertex of $P$ carries an equilibrium point with respect to $o$.
\end{prop}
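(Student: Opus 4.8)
The plan is to verify the three equilibrium conditions of Remark~\ref{rem:smallperturb} directly, exploiting the defining property of a Koebe polyhedron $P$: each edge $E$ is tangent to $\S^2$, so the point of tangency $t_E$ is exactly the orthogonal projection of $o$ onto the line $\aff E$, and $|t_E| = 1$ while every other boundary point of $P$ has norm $> 1$ (since $o$ is the center of the midsphere and $P$ is circumscribed about it in the edge sense). First I would record this basic fact, together with its consequence for faces: if $F$ is a face of $P$, lying in a plane $H_F$, then the foot $p_F$ of the perpendicular from $o$ to $H_F$ satisfies $|p_F| < 1$ (it is strictly inside the sphere, as $H_F$ meets the open ball), and for each edge $E \subset F$ the tangency point $t_E$ realizes the minimal distance from $o$ to $\aff E$; by a short two-dimensional argument in $H_F$, the point $p_F$ lies on the inner side of each edge line of $F$, hence $p_F \in \relint F$. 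By Remark~\ref{rem:smallperturb}(i) this gives a stable equilibrium on every face.

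Next I would treat vertices. Let $q$ be a vertex of $P$. I claim the plane through $q$ perpendicular to $q - o$ contains no other point of $P$, which by Remark~\ref{rem:smallperturb}(ii) makes $q$ an unstable equilibrium. Since every point $x \in \bd P$ other than the tangency points has $|x| > 1$, and each tangency point has $|x| = 1$, while $|q| > 1$ (the vertices are strictly outside $\S^2$), it suffices to observe that the Euclidean distance function $x \mapsto |x - o|$ restricted to $\bd P$ attains a strict local maximum at each vertex: indeed on any edge $E$ through $q$ the distance from $o$ is a strictly convex function of arclength with its minimum at $t_E$ in the relative interior of $E$, so it strictly increases toward $q$; and on any face through $q$ the same holds since $p_F \in \relint F$. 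Hence $q$ is a strict local maximum of the distance function, which is exactly the condition for an unstable equilibrium, and nondegeneracy follows because the supporting plane meets $P$ only at $q$.

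For edges I would use Remark~\ref{rem:smallperturb}(iii): given an edge $E$, the projection of $o$ onto $\aff E$ is the tangency point $t_E$, which lies in $\relint E$ by construction, so the first condition holds. It remains to check that the angle between $o - t_E$ (equivalently, the inward radial direction at $t_E$) and each of the two faces $F_1, F_2$ containing $E$ is acute. Since $t_E - o$ is orthogonal to $\aff E$, this angle is the dihedral-type angle between the segment $[o, t_E]$ and the half-plane of $F_i$ bounded by $\aff E$; it is acute precisely because $p_{F_i} \in \relint F_i$ lies on the inner side of $\aff E$, so moving from $t_E$ into $F_i$ decreases the distance to $o$, i.e. the angle measured toward the face interior is less than $\pi/2$. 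Collecting the three cases gives the proposition.

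The main obstacle I anticipate is not any single case but getting the planar geometry in the face argument fully rigorous: one must show that the foot of the perpendicular from $o$ to a face plane $H_F$ lies on the \emph{inner} side of every bounding edge line, and this is where the midsphere hypothesis does real work (it forces each edge of $F$ to be tangent, not secant or disjoint, to the circle $H_F \cap \S^2$). Once that lemma about a polygon circumscribed about a circle — its incenter-like point versus the foot of the perpendicular from the sphere center — is cleanly stated, the vertex and edge cases reduce to convexity of the one-variable distance function along edges and faces, which is routine.
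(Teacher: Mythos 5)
Your proof is correct in outline but takes a genuinely different route from the paper. The paper's proof is a short counting argument: it only verifies the \emph{edge} case, by observing that the plane tangent to $\S^2$ at the tangency point $q$ of an edge $E$ is perpendicular to $q-o$ and meets $P$ exactly in $E$ (because every face cuts into the open ball), so $q$ is a nondegenerate saddle; then $H=e$ together with $S\leq f$, $U\leq v$ and the two relations $S+U-H=2$, $f+v-e=2$ forces $S=f$ and $U=v$, so faces and vertices come for free. Your approach instead verifies all three conditions of Remark~\ref{rem:smallperturb} directly; it is longer and requires the planar lemma about the inscribed circle $H_F\cap\S^2$ of each face, but it buys an explicit location for every equilibrium point rather than a mere count.

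Two points in your write-up need tightening. First, in the vertex step the clause ``on any face through $q$ the same holds since $p_F\in\relint F$'' is not a valid inference as stated: for a general convex polygon $F$ and a general interior point $p_F$, a vertex need not be a local maximum of $x\mapsto|x-p_F|$ on $F$ (take a vertex with interior angle close to $\pi$ and $p_F$ near the far end of one of its edges; the distance then increases along the other edge). What saves you is a fact you have already established: since the projection of $o$ (equivalently of $p_F$) onto each edge line is the tangency point, which lies in the relative interior of the edge, the directional derivative of $|x-o|^2$ at $q$ is strictly negative along both edge directions at $q$; every direction into $F$ at $q$ is a positive combination of these two, so the distance strictly decreases into the face. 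Adding this one line also yields the global support condition of Definition~\ref{def1}, since the tangent cone of $P$ at $q$ is generated by the edge directions. Second, the planar lemma you defer should be made explicit: each edge line of $F$ is tangent to the circle $C=H_F\cap\S^2$ centered at $p_F$, and $F$ lies on the same side of each edge line as the disk bounded by $C$ (because the tangency points of the \emph{other} edges lie both on $C$ and in $F$), so the disk is contained in $F$ and $p_F\in\relint F$. With these two repairs your argument is complete.
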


\begin{proof}
 By (\ref{Poincare}) and (\ref{Euler}) it is sufficient to show that every edge of $P$ contains an equilibrium point with respect to $o$.

Let $E$ be an edge of $P$ that touches $\S^2$ at a point $q$, and let $H$ be the plane touching $\S^2$ at $q$. Clearly, $H$ is orthogonal to $q$, and since every face of $P$ intersects the interior of the sphere, we have $H \cap P = E$. Thus, $q$ is an equilibrium point of $P$ with respect to $o$.
\end{proof}

 Since a variant of the Circle Packing Theorem \cite{Brightwell} states that every combinatorial class contains a Koebe polyhedron, it follows that every combinatorial class contains an inhomogeneous polyhedron with zero mechanical complexity.
To find a homogeneous representative appears to be a challenge.

 In \cite{Langi}, the author strengthened the result in \cite{Brightwell} by showing the existence of a Koebe polyhedron $P$ in each combinatorial class such that the center of mass of the $k$-dimensional skeleton of $P$, where $k=0,1$ or $2$, coincides with $o$. This result and Proposition~\ref{prop:Koebe} imply that replacing $c(P)$ by the center of mass of the $k$-skeleton of a polyhedron with $0 \leq k \leq 2$, every combinatorial class contains a polyhedron with zero mechanical complexity.

\subsection{Inverse type questions}
The basic goal of this paper is to explore the nontrivial links between the  combinatorial $(f,v)^C$ and the mechanical $(S,U)^E$ classification of convex polyhedra.  The concept of mechanical complexity (Definition \ref{defn:complex}) helps to  explore the  $(S,U)^E \to (f,v)^C$ direction of this link. Inverse type questions may be equally useful to understand this relationship: for example,  a natural question to ask is the following: 
Is it true that any equilibrium class  $(S,U)^E$ intersects all but at most finitely many combinatorial classes $(f,v)^C$? Here it is worth noting that it is easy to carry out local deformations on a polyhedron that increase the number of faces and vertices, but not the number of equilibria. Alternatively, one may ask to
provide the list of all  $(S,U)^E$ classes represented by homogeneous polyhedra in a given combinatorial class $(f,v)^C$. A similar question may be asked for a secondary combinatorial class of polyhedra. In general, we know little about the answers, however we certainly know that (\ref{trivbound}) holds
and we also know that $S=f,U=v$ is a part of this list. The minimal values for $S$ and $U$ are less clear. In particular, based on our previous results it appears that the values $S=1$ and $U=1$ can be
only achieved for sufficiently high values of $f,v$. On the other hand, Theorem~\ref{thm:tetra} and Lemma \ref{lem:complexity_tetrahedron} resolve this problem at least for the  $(4,4)^C$ class.
The latter is based on a global numerical search and this could be done at least for some polyhedral classes, although the computational time grows with exponent $(f+v)$.

\subsection{Inhomogeneity and higher dimensions}
While here we described only 3D shapes, the generalization of Definitions \ref{defn:complex} and \ref{defn:class} to arbitrary dimensions is straightforward. While the actual values of mechanical complexity  are trivial in the planar case (class $(2)^E$ has mechanical complexity 2 and every other equilibrium class has mechanical complexity zero), the $d>3$ dimensional case appears an interesting question in the light of the results of Dawson et al. on monostatic simplices in higher dimensions \cite{Dawson, Dawson2, Dawson3}.
We formulated all our results for homogeneous polyhedra, nevertheless, some remain valid in the inhomogeneous case which
also offers interesting open questions. In particular,
the universal lower bound (\ref{trivbound}) is independent of the material density distribution
so it remains valid for inhomogeneous polyhedra and as a consequence,
so does Theorem \ref{thm:main1}. However, our other results (in particular the bounds for monostatic
equilibrium classes) are only valid for the homogeneous case. In the latter context it is interesting to
note that Conway proved the existence of inhomogeneous, monostatic tetrahedra \cite{Dawson}.

\subsection{Applications}

Here we describe some problems in mineralogy, geomorphology and industry where the concept of mechanical complexity could potentially contribute to the efficient description and the better understanding of the main phenomena.

\subsubsection{Crystal shapes}
Crystal shapes are probably the best known examples of polyhedra appearing in Nature and the literature on their morphological,
combinatorial and topological classification is substantial \cite{Klein}. However, as crystals are not just geometric objects but also (nearly homogeneous) 3D solids, their equilibrium classification appears to be relevant. The number of static balance points has been recognized as a meaningful geophysical shape descriptor \cite {Natural, Pebbles, Williams} and it has also been investigated in the context of crystal shapes \cite{ceug}. The theory outlined in our paper may help to add new aspects to their understanding. While the study of a broader class of crystal  shapes is beyond the scope of this paper, we can illustrate this idea in Figure \ref{fig:crystal} by two examples of quartz crystals with identical number of faces displaying a large difference in mechanical complexity. The length $a$ of the middle, prismatic part of 
the hexagonal crystal shape (appearing on the left side of Figure \ref{fig:crystal}) is not fixed in the crystal.
As we can observe,
for sufficiently small values of the length $a$ the crystal will be still in the same combinatorial class $(18,14)^C$, however, its
mechanical complexity will be reduced to zero.

\begin{figure}[ht]
\begin{center}
\includegraphics[width=0.7\textwidth]{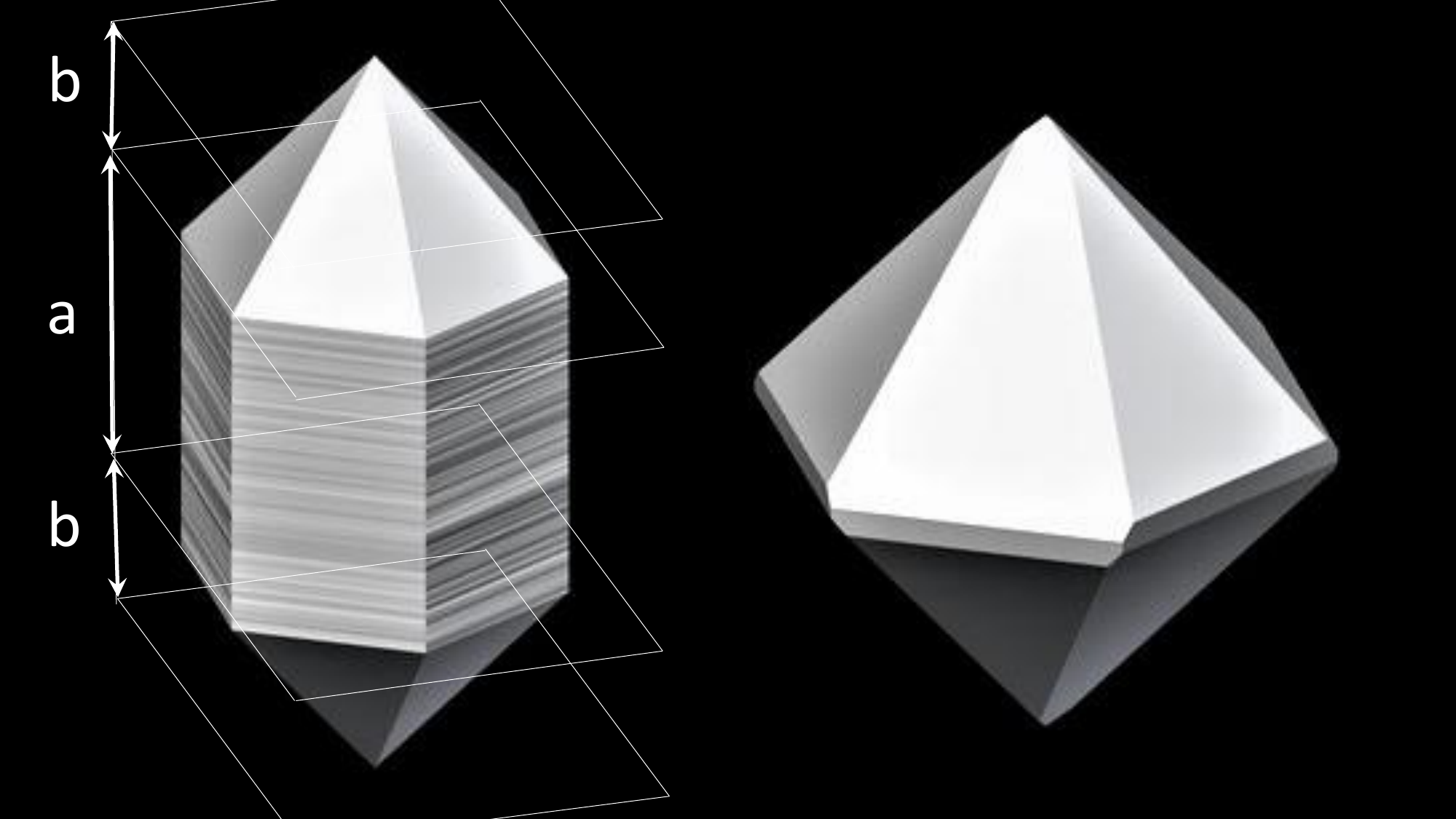}
\caption{Quartz crystals. Left: Hexagonal habit in classes $(18,14)^C$ and $(6,2)^E$, $C(P)=48$. Right: Cumberland habit \cite{White} in classes$(18,32)^C$ and $(12,8)^E$, $C(P)=60$. Picture source \cite{minerals}.}
\label{fig:crystal}
\end{center}
\end{figure}

\subsubsection{Random polytopes, chipping models and natural fragments}
There is substantial literature on the shape of random polytopes \cite{Schneider} which are obtained by successive intersections
of planes at random positions. Under rather general assumptions on the distribution of the intersecting planes it can be shown
\cite{Schneider} that the \emph{expected} primary combinatorial class of such a random polytope is  $(6,8)^C$, however, there
are no results on the mechanical complexity. A very special limit of random polytopes can be created if we use a \emph{chipping model} \cite{Sipos, Krapivsky} where one polytope is truncated with planes in such a manner that the truncated pieces
are small compared to the polytope. Although not much is known about the combinatorial properties of these polytopes, it can be shown
\cite{DomokosLangi} that under a sufficiently small truncation the mechanical complexity either remains constant or it increases (this is illustrated in Figure \ref{fig:1}). 
Apparently, random polytopes can be used to approximate natural fragments \cite{Kun}. There is data 
available on the number and type of static equilibria of the latter, so any result on the mechanical complexity of random polytopes could be readily tested and also used to identify fragmentation processes.

\subsubsection{Assembly processes}
In industrial assembly processes parts are processed by a feeder and often these parts can be approximated by polyhedra. 
These polyhedra arrive in a random orientation on a horizontal surface (tray) and end up ultimately on one of their faces carrying a stable equilibrium. Based on the relative frequency of this position, one can derive \emph{face statistics} and the throughput of a part feeder is heavily influenced by the face statistics of the parts processed by the feeder. Design algorithms for feeders 
are often investigated from this perspective \cite{Bohringer, Varkonyi}.  It is apparent that one
key factor determining the entropy of the face statistics is the mechanical complexity of the polyhedron, in particular, higher
mechanical complexity leads to better predictability of the assembly process so this concept may add a useful
aspect to the description of this industrial problem.

\subsection{Concluding remarks}
We showed an elementary connection between the Euler and Poincar\'e-Hopf formulae (\ref{Poincare}) and (\ref{Euler}): the mechanical complexity of a polyhedron is determined jointly by its equilibrium class  $(S,U)^E$ and combinatorial class $(f,v)^C$. Mechanical complexity appears to be a good tool to highlight the special properties of monostatic polyhedra and offers a new approach to the classification of crystal shapes. We defined polyhedral pairs  $(x,y)$ of integers (cf. Definition \ref{def:pair}) and showed that they play a central role in both classifications: they define all possible  combinatorial classes $(f,v)^C$ while in the mechanical classification they correspond to classes with zero complexity.

\end{document}